\newtheorem{theor}{Theorem}[section]
\newtheorem{prop}[theor]{Proposition}
\newtheorem{lem}[theor]{Lemma}
\newtheorem{conj}[theor]{Conjecture}
\newtheorem{cor}[theor]{Corollary}
\theoremstyle{definition}
\newtheorem{de}[theor]{Definition}
\newtheorem{ex}[theor]{Example}
\theoremstyle{remark}
\newtheorem {re}[theor]{Remark}
\DeclareMathOperator{\Aut}{Aut}
\def\Ker{{\rm Ker}\,}
\def\Im{{\rm Im}\,}
\def\HH{{\mathbb H}}
\def\SAut{{\mathrm{SAut}}}
\def\GG{{\mathbb G}}
\def\CC{{\mathbb C}}
\def\KK{{\mathbb K}}
\def\LL{{\mathbb L}}
\def\TT{{\mathbb T}}
\def\ZZ{{\mathbb Z}}
\def\SS{{\mathbb S}}
\def\CCC{\mathcal{C}}
\def\OO{\mathcal{O}}
\def\ZZ{\mathbb{Z}}
\def\VV{\mathbb{V}}
\def\SSS{\mathcal{S}}
\def\LLL{\mathcal{L}}
\def\Ga{\mathbb{G}_a}
\def\Gm{\mathbb{G}_m}
\begin{document}

\title{Orbits of automorphism group of trinomial hypersurfaces}
\author{Sergey Gaifullin}
\address{Moscow Center for Fundamental and Applied Mathematics, Moscow, Russia; \linebreak
Lomonosov Moscow State University, Faculty of Mechanics and Mathematics, Department of Higher Algebra, Leninskie Gory 1, Moscow, 119991 Russia; \linebreak and \linebreak
National Research University Higher School of Economics, Faculty of Computer Science, Pokrovsky Boulevard 11, Moscow, 109028, Russia}
\email{sgayf@yandex.ru}
\thanks{The first author was supported by RSF grant 20-71-00109.}

\author{Georgiy Shirinkin}
\address{Lomonosov Moscow State University, Faculty of Mechanics and Mathematics, Department of Higher Algebra, Leninskie Gory 1, Moscow, 119991 Russia}
\email{goshir59rus@gmail.com}

\subjclass[2020]{Primary 14J50,14R20;\  Secondary 13A50, 14L30}

\keywords{Affine variety, locally nilpotent derivation, graded algebra, torus action, trinomial hypersurface}

\maketitle

\begin{abstract}
Trinomial hypersurfaces form a natural class of affine algebraic varieties closely connected with varieties admitting a torus action of complexity one. We investigate orbits of the automorphism group on these hypersurfaces. We prove that each nonrigid trinomial variety has finite number of orbits. We investigate singular orbits and this gives us description of all orbits for some classes of flexible trinomial hypersurfaces. Also we obtain a description of orbits for a class of hypersurfaces having a unique variable with power one in the equation. 
\end{abstract}

\section{Introduction}

Let $\KK$ be an algebraically closed field of characteristic zero. Assume $X$ be an affine algebraic variety over $\KK$. We are interested in orbits of the natural action of the group of all regular automorphisms $\Aut(X)$ on $X$. To investigate this action it is natural to consider algebraic subgroups of $\Aut(X)$. Every connected algebraic group is generated by one-parameter subgroups, each of them is isomorphic to additive goup $(\KK,+)$ or multiplicative group $(\KK^\times,\cdot)$. We call such subgroups $\Ga$ and $\Gm$-subgroups respectively. So, it is natural to consider the subgroup $\Aut_{alg}$, generated by all connected algebraic subgroups in $\Aut(X)$. Also we consider the subgroup $\SAut(X)$ of special automorphisms, i.e. the subgroup, generated by all $\Ga$ subgroups. This subgroups are normal subgroups of $\Aut(X)$. Therefore, each automorphism permutes their orbits. So, our plan of describing $\Aut(X)$-orbits is firstly to describe $\Aut_{alg}$-orbits, and then to check if two $\Aut_{alg}$-orbits can be glued by an automorphism. 

An automorphism can not take a regular point to a singular one. So the biggest possible $\Aut(X)$- orbit is the regular locus $X^{reg}$. A situation when $X^{reg}$ is an $\SAut(X)$-orbit has a special interest. In this situation the variety $X$ is called {\it flexible}. It was proved in \cite{AFKKZ} that if $X$ is a flexible variety of dimension $\geq 2$, then the action of $\SAut(X)$ on $X^{reg}$ is infinitely transitive, i.e. each $m$-tuple of pairwise distinguish points can be simultaneously taken to each another $m$-tuple of pairwise distinguish points by an automorphism $\varphi\in \SAut(X)$. In the opposite situation, when $X$ does not admit any nontrivial $\Ga$-subgroups,  the variety $X$ is called {\it rigid}. For each affine algebraic variety we can put into correspondence its Makar-Limanov invariant $ML(X)$, i.e. the intersection of kernels of all locally nilpotent derivations. For flexible varieties $ML(X)=\{0\}$, for rigid ones $ML(X)=\KK[X]$. In other cases $ML(X)$ gives some information about $\SAut(X)$ since $ML(X)$ is the ring of $\SAut(X)$-invariant functions.
 
To deal with one-parameter subgroups we use some standard technique. An action of one-parameter multiplicative subgroup corresponds to a $\ZZ$-grading of the algebra of regular functions $\KK[X]$. Also it corresponds to a semisimple derivation of $\KK[X]$. Actions of one-parameter additive subgroup correspond to locally nilpotent derivations. If we have both a grading and a derivation, it is natural to consider the decomposition of this derivation onto the sum of homogeneous components. In this paper we elaborate some technique in case, when for a fixed $\ZZ$-filtration all LNDs has nonnegative degrees. 

In this paper we describe $\Aut(X)$-orbits for some classes of trinomial hypersurfaces. Let $n=n_0+n_1+n_2$, where $n_1$ and $n_2$ are positive integers and $n_0$ is a nonnegative integer. By a trinomial hypersurface we mean a variety given in $\KK^n$ by the following equation
$$
T_{01}^{l_{01}}\ldots T_{0n_0}^{l_{0n_0}}+T_{11}^{l_{11}}\ldots T_{1n_1}^{l_{1n_1}}+T_{21}^{l_{21}}\ldots T_{2n_2}^{l_{2n_2}}=0.
$$
If $n_0=0$, we mean that $T_{01}^{l_{01}}\ldots T_{0n_0}^{l_{0n_0}}=1$, see \cite[Construction~1.1]{HW}. We say that a trinomial hypersurface is {\it with a free term}, if $n_0=0$,  and {\it without a free term} otherwise.This class of varieties is interesting since every irreducible, normal, rational affine variety $X$ with only constant invertible functions, finitely generated divisor class group and an effective algebraic torus action of complexity one can be obtained as a quotient of a trinomial variety via action of a diagonalizable group, see \cite[Corrolary~1.9]{HW}. Here trinomial variety is a variety given by a system of trinomial equations of special form. Moreover this trinomial variety is the total coordinate space of $X$. Background in total coordinate spaces and Cox rings you can find in \cite{ADHL}. The simplest case of a trinomial variety is a trinomial hypersurface. It is easy to see that a trinomial hypersurface admits an action of a torus $\TT$ of complexity one. 

In~\cite{Ar} all factorial rigid trinomial hypersurfaces without a free term are described. In~\cite{Ga} there are criterium for arbitrary trinomial hypersurfaces to be rigid and a sufficient condition to be flexible. In~\cite{AG} orbits of $\Aut(X)$ on a rigid trinomial hypersurface without free term are described. Each $\Aut(X)$-orbit on this case can be obtained by gluing of a finite number of $\TT$-orbits. Therefore, there are infinitely many $\Aut(X)$-orbits. We prove that in all other cases the number of $\Aut(X)$-orbits is finite, see Theorem~\ref{ro} and Corrolary~\ref{fo}. Also we investigate singular orbits. We prove that for a wide class of trinomial hypersurfaces singular $\Aut_{alg}$-orbits are just $\TT$-orbits, see Theorem~\ref{sing}. This class consists of all hypersurfaces such that all powers $l_{ij}\geq 2$. 

The main technical problem in description of $\Aut(X)$-orbits is to compute the Makar-Limanov invariant $ML(X)$. We can do it in the case of trinomial variety of the form 
$$
\VV(xy_1^{a_1}\ldots y_m^{a_m}+z_1^{b_1}\ldots z_p^{b_p}+s_1^{c_1}\ldots s_m^{c_m}), b_j, c_k\geq 2.
$$
For this class of trinomial hypersurfaces we have a description of $\Aut(X)$-orbits, see Theorem~\ref{maintheor}. Note that for this class of trinomial hypersurfaces is not contained in the class for which we have described singular $\Aut_{alg}(X)$-orbits. But for this class it turns out that singular $\Aut_{alg}(X)$-orbits also coinside with $\TT$-orbits. 
We have a conjecture about $ML(X)$ in more general case. In assumption that it is true, we describe $\Aut(X)_{alg}$-orbits, see Theorem~\ref{dd}. 

The first author is a Young Russian Mathematics award winner and would like to thank its sponsors and jury.

\section{Preliminareis}

\subsection{Derivations}

Let $A$ be a commutative associative algebra over $\KK$.
A linear mapping $\partial\colon A\rightarrow A$ is called a {\it derivation} if it satisfies the Leibniz rule $\partial(ab)=a\partial(b)+b\partial(a)$ for all $a,b\in A$.
A derivation is {\it locally nilpotent} (LND) if for every $a\in A$ there is a positive integer~$n$ such that $\partial^n(a)=0$.
A derivation is {\it semisimple} if there exists a basis of $A$ consisting of $\partial$-semi-invariants. Recall that $a\in A$ is a $\partial$-semi-invariant if $\partial(a)=\lambda a$ for some $\lambda\in\KK$.

If we have an algebraic action of the additive group $(\KK,+)$ on $A$, we obtain a one-parameter $\Ga$-subgroup in $\Aut(A)$. Then its tangent vector at unity is an LND. Exponential mapping defines a bijection between LNDs and elements in $\Ga$-subgroups of $\mathrm{Aut}(A)$. Similarly, if  we have $\Gm$-subgroup of~$\Aut(A)$, then its tangent vector at unity is a semisimple derivation.

Let $F$ be an abelian group.
An algebra $A$ is called {\it $F$-graded} if 
$$A=\bigoplus_{f\in F}A_f,$$ 
and $A_fA_g\subset A_{f+g}$ for all $f$ and $g$ in $F$.
A derivation $\partial\colon A\rightarrow A$ is {\it $F$-homogeneous of degree $f_0\in F$}, if for every $a\in A_f$ we have $\partial(a)\in A_{f+f_0}$.

A derivation $\partial\colon A\rightarrow A$ is called {\it locally finite}, if any element $a\in A$ is contained in a $\partial$-invariant linear subspace $V\subset A$ of finite dimension.
It is easy to see that all semisimple derivations and all LNDs are locally finite.

Let A be a finitely generated $\ZZ$-graded algebra and $\partial$ be a derivation of $A$. Then $\partial$ can be decomposed  into  $\partial=\sum_{i=l}^k\partial_i$, where $\partial_i$ is a homogeneous derivation of degree $i$. Further when we write $\partial=\sum_{i=l}^k\partial_i$, we assume that $\partial_l\neq 0$ and $\partial_k\neq 0$.

\begin{lem}\label{fl}(See \cite{Re} for (1) and \cite[Lemma~3.1] {FZ} for (2))
Let $A$ be a finitely generated $\ZZ$-graded algebra. Assume $\partial\colon A\rightarrow A$ is a derivation. We have $\partial=\sum_{i=l}^k\partial_i$, where $\partial_i$ is a homogeneous derivation of degree $i$. Then

1) if $\partial$ is an LND then $\partial_l$ and $\partial_k$ are LNDs.

2) if $\partial$ is locally finite then if $l\neq 0$, $\partial_l$ is an LND, and if $k\neq 0$, $\partial_k$ is an LND.
\end{lem}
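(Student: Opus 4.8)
The plan is to reduce both parts to a single bookkeeping identity about the leading and trailing homogeneous components of the iterates of $\partial$. First I would record two routine preliminaries. Comparing degrees on both sides of the Leibniz rule shows that each homogeneous component $\partial_i$ is itself a derivation; and since every element of $A$ is a finite sum of homogeneous elements, a homogeneous derivation that is locally nilpotent on all homogeneous elements is automatically an LND (take the maximum of the exponents needed on the finitely many components). Thus in both parts it suffices to establish local nilpotency of $\partial_k$ and $\partial_l$ on an arbitrary homogeneous element $a\in A_d$.

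The key step is the claim that for homogeneous $a\in A_d$ the iterate $\partial^n(a)$ lies in $\bigoplus_{d+nl\le j\le d+nk}A_j$, with its top-degree component (in degree $d+nk$) equal to $\partial_k^n(a)$ and its bottom-degree component (in degree $d+nl$) equal to $\partial_l^n(a)$. I would prove this by induction on $n$: applying $\partial=\sum_{i=l}^k\partial_i$ to $\partial^n(a)$, the only way to reach degree $d+(n+1)k$ is to apply the degree-$k$ part $\partial_k$ to the degree-$(d+nk)$ component $\partial_k^n(a)$, which yields exactly $\partial_k^{n+1}(a)$; the trailing component is handled symmetrically with $\partial_l$. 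This is the step that needs the most care, since one must verify that no other combination of a component $\partial_i$ with a lower-degree piece of $\partial^n(a)$ contributes to the two extreme degrees. I expect this inductive verification of the leading and trailing terms to be the main (though elementary) obstacle.

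For part (1), if $\partial$ is an LND then $\partial^n(a)=0$ for some $n$, so every homogeneous component of $\partial^n(a)$ vanishes; in particular the top and bottom components give $\partial_k^n(a)=0$ and $\partial_l^n(a)=0$. Hence both $\partial_k$ and $\partial_l$ are locally nilpotent on homogeneous elements and so are LNDs, with no hypothesis on the signs of $l$ or $k$.

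For part (2), if $\partial$ is locally finite then $a$ lies in a finite-dimensional $\partial$-invariant subspace $V$; choosing a basis of $V$ and collecting the degrees of the homogeneous components of its elements gives a finite set $D\subset\ZZ$ with $V\subseteq\bigoplus_{j\in D}A_j$. Since all iterates $\partial^n(a)$ remain in $V$, they have no component in a degree outside $D$. If $k\neq 0$, the top degree $d+nk$ leaves the finite set $D$ for $n$ large, forcing $\partial_k^n(a)=0$, so $\partial_k$ is an LND; if $l\neq 0$, the same argument applied to the bottom degree $d+nl$ shows $\partial_l$ is an LND. Here the hypotheses $k\neq 0$ and $l\neq 0$ are exactly what replaces the eventual vanishing of $\partial^n(a)$ available in the locally nilpotent case.
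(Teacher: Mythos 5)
Your proof is correct. Note that the paper does not actually prove this lemma at all: it is stated with citations to Rentschler \cite{Re} for part (1) and to Flenner--Zaidenberg \cite[Lemma~3.1]{FZ} for part (2), so there is no in-paper argument to compare against. What you wrote is essentially the standard argument underlying those references: the inductive bookkeeping showing that the extreme homogeneous components of $\partial^n(a)$ (for homogeneous $a\in A_d$, in degrees $d+nl$ and $d+nk$) are exactly $\partial_l^n(a)$ and $\partial_k^n(a)$, after which part (1) follows from the vanishing of $\partial^n(a)$, and part (2) follows because a finite-dimensional $\partial$-invariant subspace meets only finitely many degrees, so the extreme degree $d+nk$ (respectively $d+nl$) escapes that finite set once $k\neq 0$ (respectively $l\neq 0$). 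The induction step, which you rightly flag as the only delicate point, is handled correctly: the degree $d+(n+1)k$ can only be reached by applying $\partial_k$ to the degree-$(d+nk)$ component, and symmetrically at the bottom. Your reduction to homogeneous elements (taking the maximum of finitely many nilpotency exponents) and the derivation property of each $\partial_i$ are also sound, so the proposal stands as a complete, self-contained proof of the statement the paper leaves to the literature.
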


Let $X$ be an affine algebraic variety and $A=\KK[X]$ be the algebra of regular functions on~$X$. Then $\ZZ^n$-gradings on $\KK[X]$ are in bijection with actions of $n$-dimensional algebraic torus $T=(\KK^\times)^n$ on $X$. 
If we have a $\ZZ^n$-homogeneous derivation of $\KK[X]$, we call it {\it $T$-homogeneous derivation}. Each LND we can decompose onto a sum of $\mathbb{Z}^n$-homogeneous derivations. Lemma~\ref{fl} implies, that homogeneous component corresponding to vertexes of convex hull of degrees of nonzero summands are LNDs.

Let us use the notation $a\mid b$ if $a$ divides $b$.  

\begin{lem}\label{semisimp}
Assume $A$ is a domain. Let $\delta$ be a semisimple derivation, corresponding to a subgroup $\Lambda~\cong~\KK^\times$ of $\Aut(A)$. Suppose for some $f\in A$ we have $f\mid\delta(f)$. Then $f$ is $\Lambda$-homogeneous, and hence, for every $\varphi\in\Lambda$, there exists $\lambda\in\KK^\times$ such that  $\varphi(f)=\lambda f$.
\end{lem}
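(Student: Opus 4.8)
The plan is to use the standard translation between the semisimple derivation $\delta$ and the $\ZZ$-grading it induces on $A$. Since $\delta$ is semisimple and corresponds to the subgroup $\Lambda\cong\KK^\times$, the algebra decomposes as $A=\bigoplus_{m\in\ZZ}A_m$, where $A_m=\{a\in A:\delta(a)=ma\}$ is the weight-$m$ eigenspace; equivalently $\varphi_t(a)=t^m a$ for $a\in A_m$ and $\varphi_t\in\Lambda$. In particular $\delta$ is homogeneous of degree $0$: it maps $A_m$ into $A_m$ and acts there as multiplication by the integer $m$. I would write $f=\sum_m f_m$ with $f_m\in A_m$ for the homogeneous decomposition, and, using the hypothesis $f\mid\delta(f)$, fix $g\in A$ with $\delta(f)=gf$. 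The goal is to show that exactly one $f_m$ is nonzero; we may assume $f\neq 0$.

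First I would show that $g$ is homogeneous of degree $0$ by a support count. For a nonzero $h\in A$, write $\mu(h)$ and $M(h)$ for the least and greatest degrees occurring in the homogeneous decomposition of $h$. Since $A$ is a domain and the grading is by $\ZZ$, for nonzero $a,b$ the extreme homogeneous components multiply to nonzero elements, so $\mu(ab)=\mu(a)+\mu(b)$ and $M(ab)=M(a)+M(b)$. If $\delta(f)=0$, then $mf_m=0$ for all $m$, forcing $f=f_0$ and we are done; so assume $\delta(f)\neq 0$. As $\delta$ is homogeneous of degree $0$, we have $\supp(\delta(f))\subseteq\supp(f)$, hence $\mu(f)\le\mu(\delta f)$ and $M(\delta f)\le M(f)$. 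Applying the product formulas to $\delta(f)=gf$ gives $\mu(g)+\mu(f)=\mu(\delta f)\ge\mu(f)$ and $M(g)+M(f)=M(\delta f)\le M(f)$, so $\mu(g)\ge 0\ge M(g)$; together with $\mu(g)\le M(g)$ this forces $\mu(g)=M(g)=0$, i.e. $g\in A_0$.

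With $g\in A_0$, comparing the degree-$m$ homogeneous components of $\delta(f)=gf$ yields $mf_m=gf_m$, that is $(g-m\cdot 1)f_m=0$ in $A$ for every $m$. Here the domain hypothesis does the real work: whenever $f_m\neq 0$ it forces $g=m\cdot 1$, so $g$ is a scalar equal to the integer $m$. If two distinct indices $m\neq m'$ had $f_m,f_{m'}\neq 0$, we would obtain $m=g=m'$, a contradiction; hence a single component $f_m$ is nonzero and $f=f_m$ is $\Lambda$-homogeneous. Finally, homogeneity of degree $m$ gives $\delta(f)=mf$ and, for every $\varphi_t\in\Lambda$, $\varphi_t(f)=t^m f$, which is the asserted relation with $\lambda=t^m\in\KK^\times$. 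I expect the only genuinely delicate point to be the passage from $(g-m\cdot 1)f_m=0$ to ``$g$ is the constant $m$'', which is precisely where integrality of $A$ is indispensable; the degree bookkeeping showing $g\in A_0$ is routine once additivity of $\mu$ and $M$ in a graded domain is in place.
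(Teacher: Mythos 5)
Your proof is correct and takes essentially the same route as the paper's: decompose $f$ and $g$ into homogeneous components for the $\ZZ$-grading induced by $\Lambda$, use the domain hypothesis on the extremal components of the product $gf$ (compared with the support of $\delta(f)=\sum_m m f_m$) to force $g\in A_0$, and then compare components to see that only one $f_m$ survives. The only difference is that you treat the degenerate cases $f=0$ and $\delta(f)=0$ explicitly and spell out why $(g-m\cdot 1)f_m=0$ pins down $g$ as an integer, points the paper leaves implicit.
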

\begin{proof}
We have $\delta(f)=fg$. Let us consider $\ZZ$-grading corresponding to $\Lambda$. Let $f~=~\sum_{i=l}^k f_i$ and $g=\sum_{j=p}^q g_j$ be decompositions onto homogeneous components. Then $\delta(f)~=~\sum_{u=l+p}^{k+q}\delta(f)_u$. Since $A$ is a domain, $\delta(f)_{l+p}\neq 0$ and $\delta(f)_{k+q}\neq 0$. But $\delta$ acts on $A_i$ by multiplying by $i$, that is $\delta(f)~=~\sum_{i=l}^k if_i$. So, we have $p=q=0$. Hence, $g\in A_0$. Then $\delta(f)_i=g f_i=i f_i$. Hence, $f$ is homogeneous of some degree $i$. Thus, $\varphi(f)=t^i f$ for some $t\in \KK^\times$. 
\end{proof}
\subsection{Flexible, rigid and semi-rigid affine varieties}

\begin{de}
A variety is called {\it rigid} if its algebra of regular functions does not admit any LNDs. 
\end{de}

By \cite[Theorem~2.1]{AG}, see also~\cite[Section~3]{FZ} the automorphism group of a rigid variety has unique maximal torus. This gives a powerful tool for investigating automorphisms of rigid varieties.

\begin{de}
A variety $X$ is called {\it semi-rigid} if kernels of all LNDs of $\KK[X]$ coinside. 
\end{de}

For semi-rigid varieties $ML(X)$ has transendense degree equal to $\dim X-1$, therefore, dimensions of $\SAut(X)$-orbits not bigger then one. Hence, all $\SAut(X)$-orbits are lines or points. 

\begin{de}
A variety $X$ is called {\it flexible} if for every regular point $x\in X^{reg}$, the tangent space $T_{x}X$ is spanned by tangent vectors to 
$\Ga$-orbits for various regular $\Ga$-subgroups in $\Aut(X)$.

\end{de}

\begin{de}
The subgroup of $\Aut(X)$ generated by all $\Ga$-subgroups is called {\it subgroup of special automorphisms}. We denote it by $\mathrm{SAut}(X)$.
\end{de}

Let a group $G$ act on a set $X$. This action is called {\it $m$-transitive} if for every two $m$-tuples $(a_1,\ldots, a_m)$ and $(b_1, \ldots, b_m)$, where $a_i\neq a_j$ and $b_i\neq b_j$ if $i\neq j$, there is an element $g$ in $G$ such that for all $i$ we have $g\cdot a_i=b_i$.
If an action is $m$-transitive for every positive integer $m$, then it is called {\it infinitely transitive}.

One of the main results on flexible varieties is the following.
\begin{prop}\label{aaa}\cite[Theorem~0.1]{AFKKZ}
For an irreducible affine variety $X$ of dimension $\geq 2$, the following
conditions are equivalent.

(i) The group $\mathrm{SAut}(X)$ acts transitively on $X^{\mathrm{reg}}$,

(ii) The group $\mathrm{SAut}(X)$ acts infinitely transitively on $X^{\mathrm{reg}}$,

(iii) The variety $X$ is flexible.
\end{prop}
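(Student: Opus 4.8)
The plan is to prove the cycle of implications $(ii)\Rightarrow(i)\Rightarrow(iii)\Rightarrow(ii)$, where the first two are essentially formal and the closing implication carries the whole weight of the theorem. The implication $(ii)\Rightarrow(i)$ is immediate, since infinite transitivity includes the case $m=1$, which is transitivity. For $(i)\Rightarrow(iii)$ I would argue at the level of tangent spaces. If $\mathrm{SAut}(X)$ acts transitively on $X^{\mathrm{reg}}$, then $X^{\mathrm{reg}}$ is a single orbit, and because $\mathrm{SAut}(X)$ is generated by $\Ga$-subgroups, the tangent directions to this orbit at a point $x$ are spanned by the velocity vectors of the one-parameter unipotent subgroups through $x$, that is, by the values $\partial(x)$ of the LNDs $\partial$ corresponding to these subgroups. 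As the orbit is all of $X^{\mathrm{reg}}$, these vectors span $T_xX$, which is precisely flexibility.

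For the main implication $(iii)\Rightarrow(ii)$ I would proceed in two stages. The first stage recovers transitivity from flexibility: flexibility says the LND velocity fields span $T_xX$ at every regular $x$, so every $\mathrm{SAut}(X)$-orbit is open in $X^{\mathrm{reg}}$; since distinct orbits are disjoint, each orbit is also closed in $X^{\mathrm{reg}}$, and as $X$ is irreducible the set $X^{\mathrm{reg}}$ is connected, forcing a single orbit. The second stage upgrades transitivity to infinite transitivity by induction on $m$. The central device is the \emph{replica}: if $\partial$ is an LND and $f\in\Ker\partial$, then $f\partial$ is again an LND, so $\exp(f\partial)$ is a $\Ga$-subgroup in $\mathrm{SAut}(X)$ whose tangent vector at a point $p$ equals $f(p)\,\partial(p)$ and which fixes the entire zero locus of $f$ pointwise. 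Given distinct points $q_1,\dots,q_m$ and a further point $p$, one chooses, for each $\partial$ from a spanning family of LNDs at $p$, a function $f\in\Ker\partial$ vanishing at the $q_i$ but not at $p$; the resulting replicas lie in the stabilizer of $\{q_1,\dots,q_m\}$, and their velocities $f(p)\partial(p)$ still span $T_pX$. Hence the stabilizer of $q_1,\dots,q_m$ acts with open orbit on $X^{\mathrm{reg}}\setminus\{q_1,\dots,q_m\}$ and, by the same connectedness argument, transitively there. The inductive step then reads: to carry $(p_1,\dots,p_{m+1})$ to $(q_1,\dots,q_{m+1})$, first use $m$-transitivity to send $(p_1,\dots,p_m)$ to $(q_1,\dots,q_m)$, which drags $p_{m+1}$ to a point distinct from all the $q_i$, and then use the transitive action of the stabilizer of $(q_1,\dots,q_m)$ to push that point to $q_{m+1}$.

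The hard part is the replica construction inside the stabilizer, namely producing, for enough LNDs $\partial$, kernel elements $f$ that vanish at $q_1,\dots,q_m$ yet not at $p$. This can fail for a fixed $\partial$ when some $q_i$ lies in the same $\partial$-orbit as $p$, since elements of $\Ker\partial$ are constant along $\partial$-orbits; the remedy is to exploit that flexibility supplies many LNDs, so one may replace $\partial$ by conjugates under already-available automorphisms until the finitely many points are separated from $p$ by the various kernels, and to use that each $\Ker\partial$ has transcendence degree $\dim X-1$ so that its zero sets are large enough to realize the prescribed vanishing. Controlling this separation simultaneously for a family of LNDs spanning $T_pX$, and thereby guaranteeing that the stabilizer orbit is genuinely open rather than merely infinitesimally full, is the technical core of the argument and the step I expect to demand the most care.
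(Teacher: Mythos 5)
First, a point of order: the paper does not prove this proposition at all --- it is imported verbatim as \cite[Theorem~0.1]{AFKKZ}, so your attempt has to be measured against the proof in that reference rather than against anything in the present text. Your outline does reproduce the architecture of the actual argument of \cite{AFKKZ}: (ii)$\Rightarrow$(i) trivially; (iii)$\Rightarrow$(i) by observing that spanning velocity vectors make every $\SAut(X)$-orbit open in $X^{\mathrm{reg}}$, which is connected since $X$ is irreducible; and the upgrade from transitivity to infinite transitivity by induction, using replicas $f\partial$ with $f\in\Ker\partial$ to build automorphisms fixing the points already placed. The induction step itself (an automorphism sending $(p_1,\dots,p_m)$ to $(q_1,\dots,q_m)$ carries $p_{m+1}$ to a point distinct from all $q_i$, which the stabilizer then moves to $q_{m+1}$) is correct and is the standard one.

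There remain, however, two genuine gaps. The decisive one you concede yourself: producing, for a finite set $q_1,\dots,q_m$ and a point $p$, replicas vanishing at every $q_i$ whose velocities at $p$ span $T_pX$. This is not a routine separation statement. Elements of $\Ker\partial$ are constant not just along $\Ga$-orbits but along the fibers of the quotient map to $\spec\Ker\partial$, and special fibers can have dimension larger than one, so the transcendence-degree count does not by itself yield an $f$ with the prescribed vanishing; and your proposed remedy (``replace $\partial$ by conjugates until the kernels separate'') is precisely where \cite{AFKKZ} must work hardest --- their notion of a \emph{saturated} set of LNDs, closed under both replicas and conjugation, and the orbit lemmas of their Sections 1--2 exist exactly to make that loop terminate. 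Declaring this step the ``technical core demanding the most care'' means the proof is not actually given: what you have is a correct plan, not a proof. The second, smaller gap is in (i)$\Rightarrow$(iii): you treat as formal the identity between the tangent space of the $\SAut(X)$-orbit and the span of the velocity vectors $\partial(x)$. The inclusion of the span into the orbit's tangent space is obvious, but the implication needs the opposite inclusion $T_xX=T_x(\text{orbit})\subseteq\mathrm{span}$, and for a group generated by infinitely many $\Ga$-subgroups this requires an argument (\cite[Proposition~1.5]{AFKKZ}): one finds a single dominant composition map $(t_1,\dots,t_k)\mapsto\exp(t_1\partial_1)\cdots\exp(t_k\partial_k)\cdot x$ from $\AA^k$ to the orbit, applies generic smoothness, and then uses conjugation-invariance of the family of all LNDs together with transitivity to propagate spanning from a generic point to every point. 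Without this, that implication is also unproved.
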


\subsection{Trinomial hypersurfaces}\label{TH}

Let us consider an affine space $\KK^n$. We fix a partition $n=n_0+n_1+n_2$, where $n_0$ is a nonnegative integer and $n_1,n_2$ are positive integers. The coordinates on $\KK^n$ we denote by $T_{ij}$, $i\in\{0,1,2\}$, $1\leq j\leq n_i$.

\begin{de}
A {\it trinomial hypersurface} is a subvariety $X$ in $\KK^n$ given by the unique equation
$$T_{01}^{l_{01}}T_{02}^{l_{02}}\ldots T_{0n_0}^{l_{0n_0}}+T_{11}^{l_{11}}T_{12}^{l_{12}}\ldots T_{1n_1}^{l_{1n_1}}+T_{21}^{l_{21}}T_{22}^{l_{22}}\ldots T_{2n_2}^{l_{2n_2}}=0,$$
where $l_{ij}$ are positive integers.
If $n_0=0$, we obtain the variety 
$$\VV\left(1+T_{11}^{l_{11}}\ldots T_{1n_1}^{l_{1n_1}}+T_{21}^{l_{21}}\ldots T_{2n_2}^{l_{2n_2}}\right).$$ 
We call such a variety a {\it trinomial hypersurface with a free term}. 
If $n_0\neq 0$, then $X$ is a {\it trinomial hypersurface without a free term}. 
\end{de}

By \cite[Theorem~1.2(i)]{HW} every trinomial hypersurface is irreducible and normal.
We denote the monomial $T_{i1}^{l_{i1}}T_{i2}^{l_{i2}}\ldots T_{in_i}^{l_{in_i}}$ by $T_i^{l_i}$. So, trinomial hypersurface has the form $\VV\left(T_0^{l_0}+T_1^{l_1}+T_2^{l_2}\right).$

Let us consider the linear action of $n$-dimensional torus on the polynomial algebra $\KK[T_{ij}]$ given by multiplying each coordinate $T_{ij}$ by $t_{ij}$. The subgroup which stabilizes the linear span $\langle T_0^{l_0}+T_1^{l_1}+T_2^{l_2}\rangle$ we denote by $\HH$. This is a diagonalizable group, i.e. the direct product of its neutral component $\TT$, which is a $(n-1)$-dimensional torus and a commutative finite group $\GG$. Then $\TT$-action is an action of an algebraic torus on $X$ of complexity one. Sometimes $X$ has natural symmetries given by permutations of variables. Let us denote by $\SS(X)$ the group of permutations of variables, that stabilize the equation of $X$. We call this group by {\it group of symmetries of $X$}.

If $n_i=1$ and $l_{i1}=1$, then $X\cong \KK^{n-1}$. 
If for all $i$ we have $l_{i1}n_i>1$, then $X$ is factorial if and only if the numbers 
$$d_i = \gcd(l_{i1} , . . . , l_{in_i} ),\qquad i = 0, 1, 2$$ 
are pairwise coprime, see \cite[Theorem~1.1 (ii)]{HH}. A trinomial hypersurface with a free term is factorial if and only if $d_1=d_2=1$.
A factorial trinomial hypersurface without a free term is rigid if and only if all $l_{ij}\geq 2$, see \cite[Theorem~1]{Ar}.
In \cite[Theorem~2]{Ga} is obtained the following generalization of  \cite[Theorem~1]{Ar}.
\begin{prop}\label{rt}
A trinomial hypersurface $X=\VV\left(T_0^{l_0}+T_1^{l_1}+T_2^{l_2}\right)$ is not rigid if and only if one of the following holds

1) there exist $i\in\{0, 1, 2\}$ and $a\in \{1, 2, \ldots, n_i\}$ such that $l_{ia}=1$,

2) $n_0\neq 0$ and there exist $i\neq j\in\{0, 1, 2\}$ and $a\in \{1, 2, \ldots, n_i\}$, $b\in \{1, 2, \ldots, n_j\}$ such that $l_{ia}=l_{jb}=2$ and for all 
$u\in \{1, 2, \ldots, n_i\}$, $v\in \{1, 2, \ldots, n_j\}$ the numbers $l_{iu}$ and $l_{jv}$ are even.
\end{prop}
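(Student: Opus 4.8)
The plan is to prove both implications of Proposition~\ref{rt} with the complexity-one torus $\TT$ as the central tool. Since $\TT$ corresponds to a fine $\ZZ^{n-1}$-grading of $\KK[X]$, every LND decomposes into $\TT$-homogeneous components, and by Lemma~\ref{fl} the two extreme components are again LNDs. Hence $X$ is rigid if and only if $\KK[X]$ carries no nonzero $\TT$-homogeneous LND, and the whole problem reduces to deciding the existence of homogeneous LNDs. I would organize the analysis around the images of the three monomials $m_0=T_0^{l_0}$, $m_1=T_1^{l_1}$, $m_2=T_2^{l_2}$ under a candidate derivation $\partial$: applying $\partial$ to $m_0+m_1+m_2=0$ and writing $\partial(m_i)=m_i\Lambda_i$ with the logarithmic expressions $\Lambda_i=\sum_j l_{ij}\,\partial(T_{ij})/T_{ij}$ yields, after eliminating $m_0$, the single constraint $m_1(\Lambda_1-\Lambda_0)+m_2(\Lambda_2-\Lambda_0)=0$, which is the engine of every case.

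For the forward direction I would exhibit explicit LNDs. In case (1), if say $l_{ia}=1$, the variable $T_{ia}$ occurs to the first power, so its partial derivative $f_{T_{ia}}$ is a monomial $M$ not involving $T_{ia}$; pairing $T_{ia}$ with any coordinate $T_{jb}$ from a different block, the Jacobian derivation $\partial=f_{T_{jb}}\,\partial_{T_{ia}}-f_{T_{ia}}\,\partial_{T_{jb}}$ annihilates $f$ and descends to $\KK[X]$. It is locally nilpotent because $\partial(T_{jb})=-M$ lies in $\ker\partial$, while $\partial$ strictly lowers the $T_{jb}$-degree of $\partial(T_{ia})$, so it is triangular. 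In case (2), using that $\KK$ is algebraically closed and that all exponents in the two blocks $i,j$ are even, I would factor $m_i+m_j=(A+\sqrt{-1}\,B)(A-\sqrt{-1}\,B)$, where $A,B$ are the monomial square roots, turning the equation into a Danielewski-type relation $m_k+uw=0$. The hypothesis that some exponent equals $2$ makes the corresponding variable occur linearly in $A$ and $B$, which is exactly what is needed to build a locally nilpotent shift in the $u$-direction and to check that it remains polynomial on every coordinate generator.

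For the converse I would assume every $l_{ij}\geq 2$ and that condition (2) fails, and rule out a nonzero $\TT$-homogeneous LND $\partial$ by a case analysis driven by the identity above. Homogeneity of $\partial$ together with the divisibility $m_i\mid\partial(m_i)$ forces strong restrictions on which $\partial(T_{ij})$ can be nonzero, in the spirit of Lemma~\ref{semisimp}; when all exponents are at least $2$ the logarithmic terms cannot cancel in $m_1(\Lambda_1-\Lambda_0)+m_2(\Lambda_2-\Lambda_0)=0$ unless two of the monomials share a perfect-square structure, and I would show that this happens precisely under the excluded even-exponent configuration of (2). Tracking the transcendence degree $\td\ker\partial=\dim X-1$ should then deliver the final contradiction.

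The main obstacle, I expect, is twofold, and both parts concern the borderline even case. On the construction side (case 2), the delicate point is proving that the derivation obtained after the $\sqrt{-1}$-factorization is regular on all of $X$ and not merely on a localization, i.e.\ that the combinatorial conditions ``all exponents even'' and ``some exponent equal to $2$'' are exactly sufficient: if a block carries no exponent equal to $2$, the naive shift acquires denominators. On the rigidity side, the same configuration is the only way the cancellation in the monomial identity can succeed, so the heart of the converse is a careful divisibility and degree bookkeeping showing that no other even pattern, and no pattern with all exponents $\geq 3$, admits a homogeneous LND.
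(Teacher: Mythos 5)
Your ``if'' direction is essentially the paper's own construction: the Jacobian derivation $f_{T_{jb}}\partial_{T_{ia}}-f_{T_{ia}}\partial_{T_{jb}}$ attached to a variable with $l_{ia}=1$ is exactly the paper's $\gamma_{ij}$, and your $\sqrt{-1}$-factorization $m_i+m_j=(A+\sqrt{-1}\,B)(A-\sqrt{-1}\,B)$ is the conceptual source of the paper's explicit $\delta_{i\pm}$ (you should still say a word about why $n_0\neq 0$ is required in case 2: with a free term the relation becomes $uw=-1$, so $u,w$ are units and no locally nilpotent shift exists). Note, however, that this paper never proves Proposition~\ref{rt}; it quotes it from \cite[Theorem~2]{Ga} and merely records these LNDs, so the converse is the only part where you are genuinely on your own.

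The converse is where your proposal has a real gap, located exactly at the step you treat as available: ``the divisibility $m_i\mid\partial(m_i)$''. This is not a consequence of $\TT$-homogeneity, and it cannot be established in the generality in which you use it, because it would prove far too much. For an LND $\partial$ of a domain, $m_i\mid\partial(m_i)$ forces $\partial(m_i)=0$ by \cite[Principle~5]{Fr} (cited in the paper). If that held for two of the monomials, then by $m_0+m_1+m_2=0$ all three would lie in $\Ker\partial$, so $\partial$ would kill the nonconstant invariant $m_1/m_2$ and hence, in characteristic zero, the whole field $\KK(X)^{\TT}$, which has transcendence degree one and is algebraic over $\KK(m_1/m_2)$; that is, $\partial$ would be a homogeneous LND of vertical type, hence zero by \cite[Lemma~2]{Ar}, quoted in Section~4 of the paper. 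In other words, granting your divisibility makes \emph{every} trinomial hypersurface rigid, contradicting part 1) of the very proposition and your own $\gamma_{ij}$. The divisibility is precisely what fails for the LNDs that do exist, so establishing it (equivalently, excluding horizontal homogeneous LNDs) under the hypotheses ``all $l_{ij}\geq 2$ and condition (2) fails'' is the entire content of the hard direction, carried out in \cite{Ga}; your appeal to ``careful divisibility and degree bookkeeping'' and to the ``spirit of Lemma~\ref{semisimp}'' (a statement about semisimple derivations, not LNDs) does not supply that mechanism. The identity $m_1(\Lambda_1-\Lambda_0)+m_2(\Lambda_2-\Lambda_0)=0$ is correct but is only a restatement of $\partial(f)=0$, and by itself carries no information about which homogeneous LNDs survive.
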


In~\cite[Theorem 5.5]{AG} the automorphism group of a rigid trinomial hypersurface is described. It is isomorphic to $\SS(X)\rightthreetimes\HH$.  

In all other cases we can consider some locally nilpotent derivations given by the following formulas.

1) Let $X=\VV\left(T_0^{l_0}+T_1^{l_1}+T_{21}T_{22}^{l_{22}}\ldots T_{2n_2}^{l_{2n_2}}\right)$.  For every $i\in\{0,1\}$ and  $j\in\{1,2,\ldots, n_i\}$
we can define an LND $\gamma_{ij}$ of $\KK[X]$ by
$$
\gamma_{ij}(T_{21})=-\frac{\partial T_i}{\partial T_{ij}},\qquad
\gamma_{ij}(T_{ij})=\frac{\partial T_2}{\partial T_{21}},\qquad
\gamma_{ij}(T_{pq})=0\ \text{for all other pairs}\ (p,q).
$$

2) Let  
$$X=\VV\left(T_{01}^2T_{02}^{2m_{02}}\ldots T_{0n_0}^{2m_{0n_0}}-T_{11}^2T_{12}^{2m_{12}}\ldots T_{1n_1}^{2m_{1n_1}}-T_2^{l_2}\right). $$ 
For each $1\leq i\leq n_2$ we have two LNDs $\delta_{i+}$ and $\delta_{i-}$ of $\KK[X]$ given by

\begin{equation*}
\begin{cases}
\delta_{i+}(T_{01})=\frac{\partial T_2}{\partial T_{2i}}T_{12}^{m_{12}}\ldots T_{1n_1}^{m_{1n_1}};\\
\delta_{i+}(T_{11})=-\frac{\partial T_2}{\partial T_{2i}}T_{02}^{m_{02}}\ldots T_{0n_0}^{m_{0n_0}};\\
\delta_{i+}(T_{2i})=2T_{02}^{m_{02}}\ldots T_{0n_0}^{m_{0n_0}}T_{12}^{m_{12}}\ldots T_{1n_1}^{m_{1n_1}}\left(
T_{01}T_{02}^{m_{02}}\ldots T_{0n_0}^{m_{0n_0}}+T_{11}T_{12}^{m_{12}}\ldots T_{1n_1}^{m_{1n_1}}\right);\\
\delta_{i+}(T_{jk})=0, \forall (j,k)\notin\{ (0,1), (1,1), (2,i)\}.
\end{cases}
\end{equation*}
\begin{equation*}
\begin{cases}
\delta_{i-}(T_{01})=\frac{\partial T_2}{\partial T_{2i}}T_{12}^{m_{12}}\ldots T_{1n_1}^{m_{1n_1}};\\
\delta_{i-}(T_{11})=\frac{\partial T_2}{\partial T_{2i}}T_{02}^{m_{02}}\ldots T_{0n_0}^{m_{0n_0}};\\
\delta_{i-}(T_{2i})=2T_{02}^{m_{02}}\ldots T_{0n_0}^{m_{0n_0}}T_{12}^{m_{12}}\ldots T_{1n_1}^{m_{1n_1}}\left(
T_{01}T_{02}^{m_{02}}\ldots T_{0n_0}^{m_{0n_0}}-T_{11}T_{12}^{m_{12}}\ldots T_{1n_1}^{m_{1n_1}}\right);\\
\delta_{i-}(T_{jk})=0, \forall (j,k)\notin\{ (0,1), (1,1), (2,i)\}.
\end{cases}
\end{equation*}

For some trinomial hypersurfaces these LNDs provide flexibility of $X$. In \cite{Ga} the following types $H_1-H_5$ of trinomial hypersurfaces were introduced.

\ 

\begin{tabular}{|c|c|}
\hline
{\tiny\ }&{\tiny\ }\\
$H_1$&
$\VV\left(T_0^{l_0}+T_1^{l_1}+T_{21}T_{22}\ldots T_{2n_2}\right)$\\
{\tiny\ }&{\tiny\ }\\
\hline
{\tiny\ }&{\tiny\ }\\
$H_2$&$\VV\left(T_{01}^2T_{02}^2\ldots T_{0n_0}^2+T_{11}^2T_{12}^2\ldots T_{1n_1}^2+T_2^{l_2}\right)$\\
{\tiny\ }&{\tiny\ }\\
\hline
{\tiny\ }&{\tiny\ }\\
$H_3$&$\VV\left(T_0^{l_0}+T_{11}T_{12}^{l_{12}}\ldots T_{1n_1}^{l_{1n_1}}+T_{21}T_{22}^{l_{22}}\ldots T_{2n_2}^{l_{2n_2}}\right)$\\
{\tiny\ }&{\tiny\ }\\
\hline
{\tiny\ }&{\tiny\ }\\
$H_4$&$
\VV\left(T_{01}T_{02}^{l_{02}}\ldots T_{0n_0}^{l_{0n_0}}+T_{11}^2T_{12}^{2m_{12}}\ldots T_{1n_1}^{2m_{1n_1}}+T_{21}^2T_{22}^{2m_{22}}\ldots T_{2n_2}^{2m_{2n_2}}\right)$\\
{\tiny\ }&{\tiny\ }\\
\hline
{\tiny\ }&{\tiny\ }\\
$H_5$&$\VV\left(T_{01}^2T_{02}^{2m_{02}}\ldots T_{0n_0}^{2m_{0n_0}}+T_{11}^2T_{12}^{2m_{12}}
\ldots T_{1n_1}^{2m_{1n_1}}+T_{21}^2T_{22}^{2m_{22}}\ldots T_{2n_2}^{2m_{2n_2}}\right)$\\
{\tiny\ }&{\tiny\ }\\
\hline
\end{tabular}

\begin{prop}(\cite[Theorem~4]{Ga}) Trinomiaal hypersurfaces of types $H_1-H_5$ are flexible.
\end{prop}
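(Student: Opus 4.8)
The plan is to verify flexibility through its infinitesimal formulation together with the equivalence in Proposition~\ref{aaa}. Since each type $H_1$--$H_5$ has dimension $n-1\geq 2$, it suffices to prove that $\SAut(X)$ acts transitively on $X^{\mathrm{reg}}$, and by the tangent-space definition of flexibility this reduces to showing that at every smooth point $x=(t_{ij})$ the tangent vectors to the various $\Ga$-orbits through $x$ span $T_xX$. I would set up $T_xX=\Ker d f_x$, where $f=T_0^{l_0}+T_1^{l_1}+T_2^{l_2}$, and record that the tangent vector of the $\Ga$-subgroup $\exp(t\partial)$ at $x$ is the evaluation $v_\partial(x)=(\partial(T_{ij})(x))_{ij}$.

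Next I would assemble the LND toolkit available for each type. Every $H_1$--$H_5$ contains, after permuting the three monomials by a symmetry in $\SS(X)$ and rescaling coordinates (which is harmless over the algebraically closed $\KK$), a configuration matching one of the two families in Section~\ref{TH}: types $H_1,H_3$, and $H_4$ via its degree-one variable $T_{01}$, admit the derivations $\gamma_{ij}$, while $H_2,H_4,H_5$ admit the pairs $\delta_{i\pm}$. I would enlarge each family by its \emph{replicas}: for an LND $\partial$ and any $h\in\Ker\partial$ the map $h\partial$ is again an LND, so $v_\partial(x)$ may be rescaled by the value $h(x)$ for a large supply of monomial invariants $h$. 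Finally, since $\TT$ normalizes $\SAut(X)$ it permutes the $\Ga$-subgroups, so the spanning property is $\TT$-invariant and I need only test it on one representative of each $\TT$-orbit meeting $X^{\mathrm{reg}}$.

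The spanning check then proceeds stratum by stratum. On the open stratum where all $t_{ij}\neq 0$ the partial derivatives $\partial T_i/\partial T_{ij}$ and $\partial T_2/\partial T_{21}$ are all nonzero, so for family $1$ the vectors $v_{\gamma_{ij}}(x)$ with $i\in\{0,1\}$ and $j$ ranging already move every relevant $T_{ij}$ independently, and the missing direction is recovered from the relation $df_x=0$; for family $2$ the combinations $v_{\delta_{i+}}(x)\pm v_{\delta_{i-}}(x)$ separate the $T_{01}$- and $T_{11}$-directions, and running $i$ over the third block together with the replicas fills out the rest. On the coordinate hyperplanes, where some $t_{ij}=0$, I would read off from the explicit formulas which LND directions survive and combine them with the replicas to rebuild a spanning set.

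The main obstacle is precisely this last step on the deepest strata, where several coordinates vanish and many of the partial derivatives in $\gamma_{ij}$ and $\delta_{i\pm}$ drop out. Here the argument must invoke the defining exponent conditions of each type --- the presence of a degree-one variable in $H_1,H_3,H_4$ and the uniformly even exponent structure of $H_2,H_4,H_5$ --- to show that every such stratum either fails to meet $X^{\mathrm{reg}}$ or still carries enough surviving LND directions to span. I expect the genuine case-by-case work, and the only real divergence between the five types, to live in this coordinate-stratum bookkeeping, whereas the generic-stratum computation above is uniform and essentially routine.
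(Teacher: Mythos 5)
There is a genuine gap, and it sits exactly where you acknowledge it: your proposal never carries out the spanning verification on the strata where coordinates vanish, which is the only place flexibility is actually in question and the only place the defining exponent conditions of $H_1$--$H_5$ enter. Saying that each such stratum ``either fails to meet $X^{\mathrm{reg}}$ or still carries enough surviving LND directions'' is a restatement of the theorem, not a proof of it. Note also that this paper does not prove the proposition at all: it is imported from \cite[Theorem~4]{Ga}, where the argument runs by exhibiting explicit compositions of $\exp(u\gamma_{ij})$, $\exp(u\delta_{i\pm})$ and torus elements that move any regular point to any other --- the same moves this paper borrows (as \cite[Lemmas~14 and 15]{Ga}) inside the proof of Theorem~\ref{ro}. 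Your infinitesimal route via the tangent-space definition is legitimate in principle, but all of its content lives in the step you defer.

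Moreover, the part you present as routine is already incorrect as stated. For ``family 1'' the derivations $\gamma_{ij}$, $i\in\{0,1\}$, move only the coordinates $T_{ij}$ with $i\in\{0,1\}$ and the single coordinate $T_{21}$; their values at a point, together with all replicas $h\gamma_{ij}$ (which merely rescale the same vectors), span at most an $(n_0+n_1+1)$-dimensional space, so for $n_2\geq 2$ you are missing all the $T_{2k}$-directions with $k\geq 2$ --- not ``one missing direction recovered from $df_x=0$''. Likewise, for family 2 the $\delta_{i\pm}$ never move $T_{0j}$ or $T_{1j}$ for $j\geq 2$. The needed extra directions do exist, but not by $\SS(X)$-conjugation as you claim: $\SS(X)$ only permutes variables or blocks with identical exponents, and for instance in $H_5$ with distinct $m_{0j}$, or in $H_3$, no symmetry exchanges the relevant variables. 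What one must do instead is rerun the constructions of Section~\ref{TH} with a different monomial, or a different exponent-one (resp.\ exponent-two) variable, in the distinguished role --- which is permitted precisely by the exponent hypotheses defining each type, and which your sketch never sets up. Until both this and the degenerate-stratum analysis are written out, no case of the proposition has been established.
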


The variety, which is not rigid and is not flexible, we call {\it intermediate}. In \cite{Ga} for some types of trinomial hypersurfaces it is proved that they are intermediuate and for some of them $ML(X)$ was computed, see \cite[Proposition 2 and 3]{Ga}.

\section{Partial order on an algebra}

The contest of this section partially can be find in ~\cite[section~3]{G}. But since all proofs are very short and we use other terms, we give here complete proofs. 

Let $A$ be an affine $\KK$-domain. Let us introduce a partial order $\succeq$ on $A$.

\begin{de}
We say that $a\succeq b$ if for every LND $\partial$ of $A$, if $\partial(a)=0$, then $\partial(b)=0$. 
\end{de}
It is easy to see that this order is transitive, i.e. $a\succeq b$ and $b\succeq c$ imply $a\succeq c$, and reflexive $a\succeq a$. But it is not antisymmetric, i.e. $a\succeq b$ and $b\succeq a$ do not imply $a=b$. Moreover, if $f$ and $g$ are two nonconstant polynomials in $\KK[t]$, then $f(a)\succeq g(a)$ and $g(a)\succeq f(a)$.  
Also this order respects multiplication, i.e. $a \succeq b$ and $c\succeq d$ imply $ac\succeq bd$. The set $M(a)=\{b\in A\mid a\succeq b\}$ is a factorially closed subalgebra in $A$.
The set of minimal elements coincide with $ML(X)$. Indeed, $ML(X)=\{a\in A| b\succeq a\text{ for all } b\in A\}$. For each algebra $A$ the set of maximal elements, i.e. elements $a$ such that $M(a)=A$, is nonempty. For example, if we take a system  of generators $a_1,\ldots, a_s$ of $A$ and consider $a=a_1\ldots a_s$, then $a$ is a maximal element.

Let us remind the construction of $m$-suspension, see~\cite{G}. Let $X$ be an affine variety. Fix $f\in \KK[X]\setminus \KK$. Then $Y=\mathrm{Susp}(X,f,l_1,\ldots, l_m)$ is a subvariety in
$X\times \KK^n$ given by the equation $y_1^{l_1}\ldots y_m^{l_m}=f$, where $y_1,\ldots, y_m$ are coordinates on $\KK^n$. We can consider a $\ZZ$-grading $\xi_{ij}$ on $\KK[Y]$ such that $\deg y_i=-k_j$, $\deg y_j=k_i$, degrees of all the other $y_p$ are zero, and degrees of all $g\in\KK[X]$ are zero. 

\begin{lem} \label{nap}
If $y_1$ is a maximal element of $\KK[Y]$, then $\xi_{1j}$  admits homogeneous LND only with  positive degrees.
\end{lem}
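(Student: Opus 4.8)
The plan is to reformulate the maximality of $y_1$ as a statement about locally nilpotent derivations and then exclude $\xi_{1j}$-homogeneous LNDs of nonpositive degree. By definition $y_1$ is a maximal element precisely when $M(y_1)=\KK[Y]$, that is, whenever an LND $\partial$ of $\KK[Y]$ satisfies $\partial(y_1)=0$ it must vanish identically. Equivalently, \emph{every nonzero LND of $\KK[Y]$ satisfies $\partial(y_1)\neq 0$}. So I would fix a nonzero $\xi_{1j}$-homogeneous LND $\partial$ of degree $d$, and, arguing by contradiction, assume $d\leq 0$ with the goal of deriving $\partial(y_1)=0$.

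The main observation is a divisibility statement forced by the shape of $\xi_{1j}$: \emph{every $\xi_{1j}$-homogeneous element of $\KK[Y]$ of strictly negative degree is divisible by $y_1$}. To see this I would lift such an element homogeneously to the polynomial ring $\KK[X][y_1,\dots,y_m]$, which is legitimate because the defining relation $y_1^{l_1}\cdots y_m^{l_m}=f$ is $\xi_{1j}$-homogeneous, so the projection onto $\KK[Y]$ is graded. In the polynomial ring a monomial $g\,y_1^{a_1}\cdots y_m^{a_m}$ with $g\in\KK[X]$ has degree $-a_1 k_j+a_j k_1$, since $\deg y_1=-k_j<0$, $\deg y_j=k_1$, and all remaining generators have degree zero. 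A negative value of this degree forces $a_1\geq 1$, so every monomial occurring in a homogeneous lift of negative degree is divisible by $y_1$; hence $y_1$ divides the lift and therefore the original element.

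With this in hand the argument closes quickly. The element $\partial(y_1)$ is $\xi_{1j}$-homogeneous of degree $-k_j+d$, and the assumption $d\leq 0$ together with $k_j\geq 1$ gives $-k_j+d\leq -1<0$. By maximality $\partial(y_1)\neq 0$, so $\partial(y_1)$ is a nonzero homogeneous element of negative degree, whence $y_1\mid\partial(y_1)$ by the observation above. Now I invoke the locally nilpotent analogue of Lemma~\ref{semisimp}: for an LND $\partial$ of a domain, $f\mid\partial(f)$ forces $\partial(f)=0$. This follows from the additivity of the degree function $\deg_\partial(a)=\max\{n:\partial^n(a)\neq 0\}$ on the domain $\KK[Y]$: writing $\partial(y_1)=y_1 h$ with $h\neq 0$ would give $\deg_\partial(y_1)-1=\deg_\partial(y_1)+\deg_\partial(h)\geq\deg_\partial(y_1)$, which is absurd. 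Hence $\partial(y_1)=0$, contradicting $\partial(y_1)\neq 0$, and therefore $d>0$.

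I expect the only point requiring care to be the divisibility observation, and specifically the passage through the quotient by the defining relation. This is what the homogeneous lift to $\KK[X][y_1,\dots,y_m]$ takes care of: because the relation has degree $0$ it only identifies monomials of equal degree and never rewrites a negative-degree monomial (all of which contain $y_1$) as a $y_1$-free one, so the sign bookkeeping that yields $a_1\geq 1$ survives in the quotient. The remainder is a formal chain built from the reformulation of maximality and the elementary properties of $\deg_\partial$.
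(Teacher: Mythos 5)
Your proof is correct and follows essentially the same route as the paper's: assume a homogeneous LND of nonpositive degree, note that $\partial(y_1)$ then has negative $\xi_{1j}$-degree, deduce $y_1\mid\partial(y_1)$, conclude $\partial(y_1)=0$ (the paper cites Freudenburg's Principle~5 for exactly the $\deg_\partial$ argument you wrote out), and finish by maximality of $y_1$. The only difference is that you supply the details the paper leaves implicit --- the homogeneous lift to $\KK[X][y_1,\dots,y_m]$ justifying the divisibility step, and the proof of Principle~5 itself --- both of which are accurate.
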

\begin{proof}
Suppose $\partial$ is a homogeneous LND with a nonpositive  $\xi_{1j}$-degree. Then $\xi_{1j}$-degree of $\partial(y_1)$ is negative. That is $y_1\mid \partial(y_1)$. That is $\partial(y_1)=0$, see~\cite[Principle~5]{Fr}. Since $y_1$ is a maximal element, $\partial(y_1)=0$ implies $\partial=0$. 
\end{proof}

\begin{re}
It is not true that if $y_i\succeq y_j$, then $\xi_{ij}$  admits homogeneous LND only with  positive degrees. And this is the main problem in attempts compute $ML(X)$ for an arbitrary trinomial variety. Indeed, for any homogeneous LND $\delta$ such that $\delta(y_i)=0$, we can consider the LND $\delta'=y_i^N\delta$. We have $\deg \delta'=\deg \delta-Nk_j$. If $N$ is enough large, then $\deg \delta'$ is negative. 
\end{re}

\begin{lem}\label{leml}
If $y_1$ is a maximal element of $\KK[Y]$, then 

a) for every locally finite derivation $\delta$ we have $y_j\mid \delta(y_j)$ for all $j>1$;

b) $y_j\in ML(Y)$ for all $j>1$;

c) $y_j$ is a semi-invariant with respect to each $\KK^\times$-action on $\KK[Y]$.
\end{lem}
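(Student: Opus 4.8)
The plan is to analyze an arbitrary locally finite derivation through the single grading $\xi_{1j}$, using the positivity constraint supplied by Lemma~\ref{nap}. Fix $j>1$ and work throughout with $\xi_{1j}$; by construction $\deg y_1=-k_j<0$ and $\deg y_j=k_1>0$, while every $g\in\KK[X]$ and every $y_p$ with $p\notin\{1,j\}$ has degree zero. The first observation I would record is combinatorial: a $\xi_{1j}$-homogeneous element of positive degree is divisible by $y_j$. Indeed, a monomial $g\,y_1^{a_1}\cdots y_m^{a_m}$ has $\xi_{1j}$-degree $a_jk_1-a_1k_j$, which can be positive only if $a_j\geq 1$; since all monomials occurring in a homogeneous element share one degree, each is divisible by $y_j$, hence so is their sum.

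The heart of part (a) is the claim that every locally finite derivation $\delta$ has all its nonzero $\xi_{1j}$-homogeneous components in nonnegative degree. We may assume $\delta\neq 0$ and write $\delta=\sum_{i=l}^{k}\delta_i$ with $\delta_l\neq 0$. If the bottom degree $l$ were negative, then in particular $l\neq 0$, so Lemma~\ref{fl}(2) would make $\delta_l$ a nonzero $\xi_{1j}$-homogeneous LND of negative degree, contradicting Lemma~\ref{nap} (which applies precisely because $y_1$ is a maximal element). Hence $l\geq 0$. Consequently each $\delta_i(y_j)$ is homogeneous of degree $i+k_1\geq k_1>0$, so by the observation above $y_j\mid\delta_i(y_j)$ for every $i$, and therefore $y_j\mid\delta(y_j)$. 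This proves (a).

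Parts (b) and (c) then follow formally. For (b), every LND is locally finite, so (a) gives $y_j\mid\partial(y_j)$; then \cite[Principle~5]{Fr} upgrades this to $\partial(y_j)=0$, so $y_j$ lies in $\Ker\partial$ for every LND $\partial$, i.e. $y_j\in ML(Y)$. For (c), a $\KK^\times$-action corresponds to a semisimple, hence locally finite, derivation $\delta$; applying (a) yields $y_j\mid\delta(y_j)$, and Lemma~\ref{semisimp} with $f=y_j$ shows $y_j$ is $\Lambda$-homogeneous, i.e. a semi-invariant, for every such action.

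The main obstacle is the middle step, controlling the lowest homogeneous component, and it hinges on two facts being simultaneously available: that the bottom component of a locally finite derivation is an LND once its degree is nonzero (Lemma~\ref{fl}(2)), and that maximality of $y_1$ forbids homogeneous LNDs of nonpositive degree (Lemma~\ref{nap}). Everything else is divisibility bookkeeping in the $\xi_{1j}$-grading; the only care needed is to keep the argument valid in the borderline case $l=0$, where Lemma~\ref{fl}(2) supplies no information but is also not required, since $\delta_0(y_j)$ already has degree $k_1>0$.
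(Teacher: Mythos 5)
Your proof is correct and follows essentially the same route as the paper's: you decompose $\delta$ into $\xi_{1j}$-homogeneous components, combine Lemma~\ref{fl}(2) with Lemma~\ref{nap} to force the bottom degree to be nonnegative, deduce $y_j\mid\delta(y_j)$ from positivity of degrees, and then obtain (b) and (c) from \cite[Principle~5]{Fr} and Lemma~\ref{semisimp}, exactly as the paper does. The only difference is presentational: you make explicit the bookkeeping (a positive-degree $\xi_{1j}$-homogeneous element is divisible by $y_j$, and the borderline case $l=0$ needs no appeal to Lemma~\ref{fl}) that the paper leaves implicit.
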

\begin{proof}
a) Let $\delta =\sum_{i=l}^k \delta_i$ be the decomposition of $\delta$ onto the sum of $\xi_{1j}$-homogeneous derivations. If $l<0$, by Lemma~\ref{fl} we have $\delta_l$ is a homogeneous LND with negative degree. This contradicts to Lemma~\ref{nap}. Therefore, $l\geq 0$. Hence, for each $l\leq i\leq k$ we have $\deg \delta_i(y_j)>0$. So, $y_j\mid \delta_i(y_j)$. Hence, $y_j\mid \delta(y_j)$ .

b) If $\delta$ is locally nilpotent, then by \cite[Principle~5]{Fr}  $y_j\mid \delta(y_j)$ implies $\delta(y_j)=0$.  

c) The assertion of c) follows from a) and Lemma~\ref{semisimp}.
\end{proof}

\begin{cor}\label{coco}
Assume $y_1$ is a maximal element of $\KK[Y]$ and $ML(Y)\subseteq \KK[y_2,\ldots, y_m]$. Then $ML(Y)=\KK[y_2,\ldots, y_m]$ and for every $\varphi\in \Aut(X)$ and for every $j>1$ there exists $i>1$ and  $\lambda\in\KK^\times$ such that $\varphi(y_j)=\lambda y_i$.
\end{cor}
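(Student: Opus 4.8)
The statement has two parts, and I would handle them in order. For the first equality $ML(Y)=\KK[y_2,\ldots,y_m]$, the hypothesis gives one inclusion $ML(Y)\subseteq\KK[y_2,\ldots,y_m]$, so the work is in the reverse inclusion. By Lemma~\ref{leml}(b) each $y_j$ with $j>1$ lies in $ML(Y)$. Since $ML(Y)$ is the intersection of the kernels of all LNDs, it is in particular a $\KK$-subalgebra of $\KK[Y]$; therefore it contains the subalgebra generated by $y_2,\ldots,y_m$, that is $\KK[y_2,\ldots,y_m]\subseteq ML(Y)$. Combined with the assumed inclusion this yields $ML(Y)=\KK[y_2,\ldots,y_m]$.

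For the second part I would exploit that $ML(Y)$ is a characteristic subalgebra: every automorphism of $Y$ preserves the class of LNDs (conjugating one LND to another), hence preserves the intersection of their kernels. Thus for each $\varphi\in\Aut(Y)$ we have $\varphi\bigl(ML(Y)\bigr)=ML(Y)$, i.e. $\varphi$ restricts to an automorphism of $\KK[y_2,\ldots,y_m]$. The remaining task is to show that this restriction sends each generator $y_j$ to a scalar multiple of some $y_i$. Here I would invoke the multiplicative/order-theoretic structure developed in this section: each $y_j$ with $j>1$ is a minimal element of the partial order $\succeq$ restricted to $ML(Y)$ (they generate it as an algebra and, being monomial coordinates in the suspension, cannot be written as products of nonunits in $ML(Y)$), and $\varphi$ respects $\succeq$ because it permutes the LNDs. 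An order-and-factoriality argument then forces $\varphi(y_j)$ to be an irreducible minimal generator, hence associate to some $y_i$; as the only units of $\KK[Y]$ are in $\KK^\times$, this gives $\varphi(y_j)=\lambda y_i$ for some $\lambda\in\KK^\times$.

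The main obstacle I anticipate is the phrase ``for every $\varphi\in\Aut(X)$'' in the statement, since $Y=\mathrm{Susp}(X,f,l_1,\ldots,l_m)$ is built over $X$ and an automorphism of the base $X$ need not a priori lift to $Y$. The clean reading that makes the conclusion meaningful is that one works with those automorphisms of $Y$ induced by (or compatible with) automorphisms of $X$ through the defining relation $y_1^{l_1}\cdots y_m^{l_m}=f$; such a $\varphi$ acts on $\KK[Y]$ and the argument above applies verbatim. The real content to pin down rigorously is therefore the final step: translating ``$\varphi$ preserves $ML(Y)=\KK[y_2,\ldots,y_m]$ and respects $\succeq$'' into ``$\varphi$ permutes the $y_j$ up to scalars.'' I would secure this by combining that $\KK[Y]$ is factorial on the relevant monomials, that each $y_j$ is prime in $ML(Y)$, and that an automorphism of a polynomial-like graded domain sending a set of prime generators into itself must do so up to units, which are scalars.
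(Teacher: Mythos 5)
Your first half (the equality $ML(Y)=\KK[y_2,\ldots,y_m]$) is exactly the paper's argument and is fine. The genuine gap is in the last step of your second half. After correctly noting that $\varphi$ preserves $ML(Y)$ and hence restricts to an automorphism of the polynomial ring $\KK[y_2,\ldots,y_m]$, you claim that irreducibility of the $y_j$ plus ``an order-and-factoriality argument'' forces $\varphi(y_j)=\lambda y_i$. That step fails: an automorphism of a polynomial ring sends irreducibles to irreducibles but by no means variables to scalar multiples of variables (the triangular map $y_2\mapsto y_2+y_3^2$, $y_j\mapsto y_j$ for $j\neq 2$, moves the variable $y_2$ to an irreducible element associate to no $y_i$), and your phrase ``sending a set of prime generators into itself'' assumes precisely what has to be proved. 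The paper closes this gap with an ingredient your proposal never uses, namely Lemma~\ref{leml}(c): each $y_j$, $j>1$, is semi-invariant under \emph{every} $\KK^\times$-action on $\KK[Y]$, equivalently homogeneous with respect to every grading $\xi_{ij}$. An indecomposable element of $\KK[y_2,\ldots,y_m]$ homogeneous for all the $\xi_{1j}$ must be a monomial, hence $\lambda y_i$; and since conjugation by $\varphi$ permutes the $\Gm$-subgroups of the automorphism group, the property ``indecomposable in $ML(Y)$ and semi-invariant under all $\KK^\times$-actions'' is $\varphi$-stable, which yields $\varphi(y_j)=\lambda y_i$. Without this homogeneity input the conclusion does not follow.

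Two smaller points. Your appeal to the partial order $\succeq$ is vacuous here: the paper itself notes that the minimal elements of $\succeq$ are exactly the elements of $ML$, so \emph{every} element of $ML(Y)$ is minimal, and ``minimality restricted to $ML(Y)$'' cannot distinguish $y_j$ from, say, $y_2+y_3$. As for your worry about ``$\varphi\in\Aut(X)$'': this is simply a slip in the statement for $\Aut(Y)$ --- the corollary concerns $\KK[Y]$ and is later applied with $Y$ itself the trinomial hypersurface (e.g. in Proposition~\ref{p1}) --- so no lifting from the base is involved, and your proposed restriction to automorphisms induced from $X$ would actually weaken the statement below what the paper needs.
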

\begin{proof}
By Lemma~\ref{leml}(b) $y_j\in ML(Y)$ for all $j>1$. Since $ML(Y)\subseteq \KK[y_2,\ldots, y_m]$, we have $ML(Y)=\KK[y_2,\ldots, y_m]$. Thus, $y_j$ for $j>1$ are indecomposible elements of $ML(Y)$ semi-invariant under each $\KK^\times$ action, see Lemma~\ref{leml}(c). But each $\ZZ$-grading $\xi_{ij}$ corresponds to a $\KK^\times$-action. Element is semi-invariant with respect to this action if and only if it is homogeneous with respect to each $\xi_{ij}$. It is easy to see that indecomposable element of $\KK[y_2,\ldots, y_m]$ homogeneous with respect to all $\xi_{ij}$ equals $\lambda y_i$, $i>1$. Since this conditions do not chenge under an automorphism, we have $\varphi(y_j)=\lambda y_i$.
\end{proof}

We have some assertions with conditions that $y_1$ is a maximal element. So we need a sufficient condition for $y_1$ to be maximal.
\begin{lem} \label{cm}
If there exists a positive integer $n$ such that  $Z=\mathrm{Susp}(X,f,nl_1,l_2,\ldots, l_m)$ is rigid, then the element $y_1$ is maximal.
\end{lem}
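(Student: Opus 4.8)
The plan is to relate maximality of $y_1$ in $\KK[Y]$ to rigidity of the auxiliary suspension $Z=\mathrm{Susp}(X,f,nl_1,l_2,\ldots,l_m)$ by means of a comparison of their algebras of regular functions. The key observation is that $\KK[Z]$ sits naturally over $\KK[Y]$ via a finite-type ring extension coming from replacing the exponent $l_1$ by $nl_1$: writing $Y=\VV(y_1^{l_1}\cdots y_m^{l_m}-f)$ and $Z=\VV(w_1^{nl_1}y_2^{l_2}\cdots y_m^{l_m}-f)$, there is a morphism $\pi\colon Z\to Y$ sending $(w_1,y_2,\ldots,y_m,x)$ to $(w_1^n,y_2,\ldots,y_m,x)$, i.e. $\pi^*y_1=w_1^n$ and $\pi^*$ is the identity on the remaining coordinates and on $\KK[X]$. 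This realizes $\KK[Y]$ as a subalgebra of $\KK[Z]$.

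First I would argue by contraposition: suppose $y_1$ is \emph{not} maximal in $\KK[Y]$, so there is some $h\in\KK[Y]$ with $y_1\not\succeq h$, meaning there exists an LND $\partial$ of $\KK[Y]$ with $\partial(y_1)=0$ but $\partial(h)\neq 0$. In particular $\partial$ is a nonzero LND that kills $y_1$. The goal is to manufacture from $\partial$ a nonzero LND on $\KK[Z]$, which would contradict the rigidity of $Z$. Because $\pi^*y_1=w_1^n$, a derivation killing $y_1$ should be liftable to one killing $w_1$, and since $w_1^n=y_1$ is the only place the exponent was changed, the defining relation of $Z$ is preserved by such a lift. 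The cleanest way to produce the lift is to use that $w_1$ is transcendental over $\KK[X][y_2,\ldots,y_m]$ in the appropriate sense and that $\partial$ fixes $y_1=w_1^n$; one extends $\partial$ to $\KK[Z]$ by declaring $\partial(w_1)=0$ and keeping $\partial$ on all other generators. One must check this is well-defined on the quotient by the relation $w_1^{nl_1}y_2^{l_2}\cdots y_m^{l_m}-f$, which holds precisely because $\partial$ annihilates $y_1=w_1^{n}$ and hence annihilates the whole monomial $y_1^{l_1}y_2^{l_2}\cdots y_m^{l_m}=f$, matching the original relation on $\KK[Y]$.

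I expect the main obstacle to be exactly this well-definedness and local nilpotence check for the lifted derivation. One has to verify (i) that setting $\partial(w_1)=0$ together with the given values on the other generators respects the defining ideal of $Z$ — this uses that the relation of $Z$ maps under $\pi^*$-bookkeeping to the relation of $Y$, together with $\partial(f)=\partial(y_1^{l_1}\cdots y_m^{l_m})=0$ — and (ii) that the extended derivation is again locally nilpotent, which follows since $\KK[Z]$ is generated over $\KK[Y]\hookrightarrow\KK[Z]$ by the single element $w_1$ lying in the kernel of the extension, so local nilpotence is inherited from $\partial$ on the $\partial$-invariant generating set. The nonzero LND on $\KK[Z]$ contradicts rigidity of $Z$, so no such $\partial$ exists; that is, every LND of $\KK[Y]$ killing $y_1$ is identically zero, which is even stronger than $y_1\succeq h$ for all $h$. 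Hence $M(y_1)=\KK[Y]$ and $y_1$ is maximal.

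A subtle point to handle carefully is the direction of the implication and the role of $n$: rigidity of $Z$ asserts that $\KK[Z]$ has \emph{no} nonzero LND at all, so any LND we build on $\KK[Z]$ is automatically a contradiction, and we never need $\partial$ on $\KK[Y]$ to restrict from an LND on $\KK[Z]$ — we only need the lift to exist in one direction. Thus the integer $n$ is used only to guarantee that the particular suspension $Z$ we compare against is rigid; it plays no further structural role once the lift is constructed. I would close by noting that this argument in fact shows that any LND of $\KK[Y]$ with $\partial(y_1)=0$ vanishes, so $y_1\succeq b$ for every $b\in\KK[Y]$, giving $M(y_1)=\KK[Y]$ as required.
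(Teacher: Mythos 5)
Your proposal is correct and is essentially the paper's own proof: the paper likewise takes a nonzero LND $\partial$ of $\KK[Y]$ with $\partial(y_1)=0$ and lifts it to a nonzero LND $\overline{\partial}$ of $\KK[Z]$ through the embedding sending $y_1$ to the $n$-th power of the new suspension variable, with $\overline{\partial}$ killing that variable, contradicting rigidity of $Z$. One local repair to your write-up: in the well-definedness check you claim that $\partial$ annihilates the whole monomial $y_1^{l_1}y_2^{l_2}\cdots y_m^{l_m}=f$, which is false in general, since $\partial$ kills $y_1$ but need not kill $y_2,\ldots,y_m$ or $f$. The check you actually need is cleaner: $\KK[Z]\cong\KK[Y][t]/(t^n-y_1)$ via $t\mapsto w_1$, and the extension $D$ of $\partial$ to $\KK[Y][t]$ with $D(t)=0$ satisfies $D(t^n-y_1)=-\partial(y_1)=0$, so $D$ preserves the defining ideal and descends to $\KK[Z]$; your arguments for local nilpotence (a derivation that is locally nilpotent on a generating set is an LND) and for nonvanishing then go through verbatim.
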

\begin{proof}
If $y_1$ is not maximal, then there exists a nonzero LND $\partial$ such that $\partial(y_1)=0$. For each $g\in\KK[X][y_2,\ldots, y_m]$ we have $\partial(g)=h(y_1)$, where $h\in \KK[X][y_2,\ldots, y_m][t]$. Then $\overline{\partial}(g)=h(y_1^n)$, $\overline{\partial}(y_1)=0$ is a nonzero LND of $Z$.
\end{proof}

\section{$\Aut(X)_{alg}$-orbits on trinomial varieties}

Let $X$ be a rigid trinomial hypersurface. Then $\Aut(X)=\SS(X)\rightthreetimes \HH$. So, $\SAut(X)$-orbits are points, $\Aut_{alg}$-orbits coincide with $\TT$-orbits and each $\Aut(X)$-orbit is a finite union of $\TT$-orbits. Therefore, there are infinitely many $\Aut(X)$-orbits. 

Now let us consider a nonrigid trinomial hypersurface. Each derivation of $\KK[X]$ can be extended to a derivation of the field of rational functions $\KK(X)$. Denote by $\KK(X)^\TT$ the subfield of rational $\TT$-invariants. Recall that a homogeneous derivation $\delta$ of $\KK[X]$ is of vertical type if $\delta(\KK(X)^\TT)=0$ holds and of horizontal type otherwise. 
The folowing lemma is proved in~\cite{Ar}.
\begin{lem}(\cite[Lemma~2]{Ar}) Let $X$ be a trinomial hypersurface. Then $\KK[X]$ admits no nonzero homogeneous
locally nilpotent derivation of vertical type.
\end{lem}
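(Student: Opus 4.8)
The plan is to extend $\delta$ to the fraction field $\KK(X)$ and exploit the $\TT$-grading. Since $\delta$ is of vertical type it kills $K:=\KK(X)^{\TT}$, so the extended derivation is $K$-linear; by complexity one, $K$ has transcendence degree one over $\KK$. First I would record the structural fact that every nonzero $\TT$-homogeneous component $\KK(X)_m$ is one-dimensional over $K$: if $a,b\in\KK(X)_m$ are nonzero then $a/b$ is $\TT$-invariant, hence lies in $K$. Fixing a nonzero generator $\chi^m$ of each $\KK(X)_m$, every homogeneous element of weight $m$ has the form $c\chi^m$ with $c\in K$.

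Next I would study the logarithmic derivative. For a homogeneous $a$ of weight $m$ the element $L(a):=\delta(a)/a$ lies in $\KK(X)_{e}$, where $e=\deg\delta$; moreover $L(a)$ depends only on $m$, because for $a'=(a'/a)a$ with $a'/a\in K$ verticality gives $\delta(a')=(a'/a)\delta(a)$, whence $L(a')=L(a)$. Evaluating $L$ on the generators $\chi^m$ and using $\delta(fg)/(fg)=\delta(f)/f+\delta(g)/g$ together with $\delta(K)=0$, the assignment $m\mapsto L(\chi^m)$ is additive; writing $L(\chi^m)=h_m\chi^{e}$ yields a group homomorphism $h\colon M''\to(K,+)$, where $M''$ is the group generated by the weight monoid $M'$ of $\KK[X]$, and $\delta(a)=h_{m}\chi^{e}a$ for every homogeneous $a$ of weight $m$.

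Now I would bring in local nilpotence. A direct induction shows $\delta^{k}(a)=\big(\prod_{j=0}^{k-1}(h_{m}+j\,h_{e})\big)\chi^{ke}a$ up to a nonzero scalar in $K$, so $\delta^{k}(a)=0$ forces $h_{m}+j\,h_{e}=0$ for some $j\ge0$. Hence either $h_{e}=0$, in which case $h_{m}=0$ for all $m\in M'$ and $\delta=0$, or $h_{e}\neq0$ and every $h_{m}$ with $m\in M'$ is a nonpositive integer multiple of $h_{e}$; this packages into a homomorphism $\phi\colon M''\to\ZZ$ with $\phi\le0$ on $M'$ and $\phi(e)=1$. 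To close the argument I would invoke \cite[Principle~5]{Fr}: for a homogeneous $a$ whose logarithmic derivative $\delta(a)/a$ is regular, the relation $a\mid\delta(a)$ forces $\delta(a)=0$, hence $h_{\deg a}=0$; propagating this through the generators forces $h\equiv0$, so $\delta=0$.

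The hard part is exactly the regularity needed in this last step. The identity $\delta(a)=h_{m}\chi^{e}a$ only guarantees $\delta(a)/a\in\KK(X)_{e}$, not $\delta(a)/a\in\KK[X]$; indeed on the generic fibre of the quotient by $\TT$ (an affine toric variety over $K$) there can be root-type homogeneous LNDs realizing the functional $\phi$ above, so the combinatorial reduction alone cannot exclude $\delta$. The content of the lemma is that no such derivation survives as a \emph{regular} LND of the trinomial total space. Here I would combine the homogeneity of $\delta$ with the single defining relation $T_0^{l_0}+T_1^{l_1}+T_2^{l_2}=0$, which forces the three monomials to share a common weight $w_0$ and hence forces $h_{w_0}\chi^{e}$ to be the common value $\delta(T_i^{l_i})/T_i^{l_i}$, and with the transcendence degree one of $K$, in order to produce a homogeneous element whose logarithmic derivative is regular; then \cite[Principle~5]{Fr} applies and $h\equiv0$, so $\delta=0$ as required.
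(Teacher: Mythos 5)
The paper itself does not prove this lemma --- it is imported verbatim from \cite[Lemma~2]{Ar} --- so your attempt has to stand on its own, and it does not: there is a genuine gap, sitting exactly where you admit it sits. Your first three paragraphs (extension of $\delta$ to $\KK(X)$, $K$-linearity from verticality, one-dimensionality of each weight space $\KK(X)_m$ over $K=\KK(X)^{\TT}$, the additive logarithmic-derivative map $h$, and the product formula forcing each $h_m$ to be a nonpositive integer multiple of $h_e$) are correct, but they are a pure reduction to the fiberwise toric picture, and, as you yourself observe, that picture admits nontrivial solutions: root-type homogeneous LNDs on the generic fiber of the quotient realize precisely such functionals $\phi$. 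So no contradiction can come out of this machinery alone. The lemma is exactly the statement that none of these field-level derivations is regular on $\KK[X]$, and your final paragraph, which is supposed to prove this, only announces a plan (``produce a homogeneous element whose logarithmic derivative is regular'') without producing such an element or explaining why one exists; the ingredient you reach for there, $\td K=1$, is not what makes this work. As written, the proof is incomplete at its only essential point.

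What actually closes the gap is a normality/valuation argument, not more weight combinatorics, and it is essentially how \cite{Ar} proceeds. Verticality kills the invariant rational functions $T_1^{l_1}/T_0^{l_0}$ and $T_2^{l_2}/T_0^{l_0}$, so $g:=\delta(T_0^{l_0})/T_0^{l_0}=\delta(T_1^{l_1})/T_1^{l_1}=\delta(T_2^{l_2})/T_2^{l_2}$ (this is your $h_{w_0}\chi^{e}$). The three monomials involve pairwise disjoint sets of variables, and the locus $\{T_{ij}=T_{i'j'}=0\}\cap X$ with $i\neq i'$ has codimension two in $X$, because on it the equation forces the third monomial to vanish as well; hence along any prime divisor $D$ of $X$ at most one of the three monomials vanishes identically. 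Choosing for each $D$ an index $i$ with $\mathrm{ord}_D(T_i^{l_i})=0$ gives $\mathrm{ord}_D(g)=\mathrm{ord}_D(\delta(T_i^{l_i}))\geq 0$, so $g$ has no poles in codimension one, and normality of $X$ yields $g\in\KK[X]$. Thus $T_i^{l_i}\mid\delta(T_i^{l_i})$ in $\KK[X]$, and \cite[Principle~5]{Fr} forces $\delta(T_i^{l_i})=0$ for all $i$; since kernels of LNDs are factorially closed, every variable $T_{ij}$ lies in $\Ker\delta$, i.e.\ $\delta=0$. (In the free-term case $T_1^{l_1}$ and $T_2^{l_2}$ are themselves $\TT$-invariant, so verticality kills them directly.) Note that this argument needs neither the grading formalism of your first paragraphs nor local nilpotence until the last two steps; the idea missing from your proposal is the geometric input --- normality of $X$ together with codimension-one disjointness of the vanishing loci of the three monomials --- which is precisely what makes the logarithmic derivative regular.
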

Since $\TT$-action is of complexity one, it follows that $\Aut_{alg}(X)$-rational invariants are constants. So, since each $\Aut_{alg}(X)$-orbit is locally closed, see~\cite[Proposition~1.3]{AFKKZ}  we obtain the following corollary.
\begin{cor}
Let $X$ be a nonrigid trinomial hypersurface. Then there exists an open $\Aut_{alg}(X)$-orbit.
\end{cor}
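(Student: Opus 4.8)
The plan is to reduce the statement to two ingredients already prepared in the text: first, that the field of $\Aut_{alg}(X)$-invariant rational functions equals $\KK$; and second, that $\Aut_{alg}(X)$-orbits are locally closed. A locally closed subset is open in its closure, so a \emph{dense} orbit is automatically open; since $X$ is irreducible, it therefore suffices to produce a single dense orbit, and for this the triviality of the rational invariants is exactly what is needed.

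First I would establish that every $\Aut_{alg}(X)$-invariant rational function is constant. As $X$ is nonrigid, $\KK[X]$ carries a nonzero LND $\partial$. Decomposing $\partial$ into its $\TT$-homogeneous components and passing to the component supported at a vertex of the convex hull of the occurring degrees, Lemma~\ref{fl} yields a nonzero $\TT$-homogeneous LND $\delta$. By \cite[Lemma~2]{Ar} no nonzero homogeneous LND is of vertical type, so $\delta$ is of horizontal type, that is $\delta(\KK(X)^\TT)\neq 0$.

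Now the complexity-one hypothesis gives $\td\KK(X)^\TT=1$. Any nonconstant $h\in\KK(X)^\TT$ is then a transcendence basis of $\KK(X)^\TT$ over $\KK$, and in characteristic zero a $\KK$-derivation of the separable algebraic extension $\KK(X)^\TT/\KK(h)$ is uniquely determined by its value on $h$; hence $\delta(h)\neq 0$ for \emph{every} nonconstant $h\in\KK(X)^\TT$, for otherwise $\delta$ would vanish on all of $\KK(X)^\TT$, contradicting horizontality. A rational $\Aut_{alg}(X)$-invariant is in particular $\TT$-invariant, so it lies in $\KK(X)^\TT$, and it is annihilated by $\delta$ since $\delta$ is tangent to a $\Ga$-subgroup of $\Aut_{alg}(X)$; by the previous sentence such a function must be constant.

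Finally, triviality of the invariant field forces a dense orbit. By the Rosenlicht-type description of the orbits of $\Aut_{alg}(X)$, which is available precisely because these orbits are locally closed (\cite[Proposition~1.3]{AFKKZ}), the codimension of a generic orbit equals the transcendence degree of the invariant field, namely $0$; thus an orbit of maximal dimension has full dimension, is dense in the irreducible variety $X$, and being locally closed is therefore open. I expect this last passage to be the main obstacle: one must legitimately apply a Rosenlicht-type theorem to the infinite-dimensional group $\Aut_{alg}(X)$, and it is exactly the local closedness of the orbits that makes the transition from \emph{trivial rational invariants} to \emph{a genuine open orbit} rigorous.
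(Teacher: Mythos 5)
Your proposal is correct and follows essentially the same route as the paper: the paper likewise combines Lemma~\ref{fl} (extreme homogeneous components of an LND are LNDs), \cite[Lemma~2]{Ar} (no nonzero vertical homogeneous LNDs), and the complexity-one property of the $\TT$-action to conclude that $\Aut_{alg}(X)$-rational invariants are constant, and then invokes local closedness of orbits \cite[Proposition~1.3]{AFKKZ} to obtain the open orbit. The details you supply (the transcendence-degree argument showing a horizontal $\delta$ kills no nonconstant element of $\KK(X)^\TT$, and the final Rosenlicht-type passage from trivial invariants to a dense, hence open, locally closed orbit) are precisely the steps the paper leaves implicit, so there is no substantive difference in approach.
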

Let us prove that the number of $\Aut_{alg}(X)$-orbits is finite.
\begin{theor}\label{ro}
Let $X$ be a nonrigid trinomial hypersurface. Then the number of $\Aut_{alg}(X)$-orbits is finite.
\end{theor}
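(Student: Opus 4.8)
The plan is to reduce the statement to a finiteness question about a torus action of complexity one, and then count orbits by stratifying according to which coordinates vanish. Since $X$ is nonrigid, the preceding corollary gives an open $\Aut_{alg}(X)$-orbit $U$, so the complement $X\setminus U$ is a proper closed $\TT$-invariant (indeed $\Aut_{alg}(X)$-invariant) subset of smaller dimension. The natural inductive idea is to show that each $\Aut_{alg}(X)$-orbit contained in $X\setminus U$ either is again open in some $\TT$-invariant closed subvariety for which the induction applies, or can be controlled directly. Concretely, I would first observe that because the $\TT$-action has complexity one, every $\TT$-invariant irreducible closed subset $Y\subseteq X$ has $\dim Y - \dim(\text{generic }\TT\text{-orbit in } Y)\leq 1$, so the $\TT$-orbits in $Y$ form at most a one-parameter family; the goal is to glue this family into finitely many $\Aut_{alg}(X)$-orbits.

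The key structural input is the collection of explicit locally nilpotent derivations $\gamma_{ij}$ and $\delta_{i\pm}$ recorded in Subsection~\ref{TH}, together with the rigidity criterion of Proposition~\ref{rt}. First I would fix the stratification of $X$ by the coordinate subtori: for each subset $S$ of the index set $\{(i,j)\}$, let $X_S$ be the locally closed stratum where exactly the coordinates $T_{ij}$ with $(i,j)\in S$ vanish. Since the defining trinomial forces algebraic relations among the vanishing patterns, only finitely many $S$ give nonempty strata, so it suffices to show that each nonempty stratum $X_S$ meets only finitely many $\Aut_{alg}(X)$-orbits. Within a fixed stratum the torus $\TT$ acts with finitely many orbits precisely when the stratum is a single generic orbit up to the complexity-one defect; the extra one-dimensional freedom is exactly what the additive group actions generated by the $\Ga$-subgroups above are designed to absorb. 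The main step is therefore to verify that for each stratum the relevant $\gamma_{ij}$ or $\delta_{i\pm}$ restricts to a nontrivial $\Ga$-action transverse to the $\TT$-orbits, so that $\Ga\cdot\TT$ acts with finitely many orbits on $X_S$.

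The step I expect to be the main obstacle is exactly this transversality-plus-finiteness claim on the boundary strata. On the open stratum it is immediate from the existence of the open orbit, but on a stratum where several coordinates have been set to zero, some of the derivations $\gamma_{ij},\delta_{i\pm}$ may vanish identically, and one must check that enough of them survive to kill the complexity-one direction. Here the hypothesis that $X$ is nonrigid is used twice: once via Proposition~\ref{rt} to guarantee a surviving $\Ga$-action on the whole $X$, and again by examining how the monomial structure of $T_0^{l_0}+T_1^{l_1}+T_2^{l_2}$ degenerates when a coordinate is set to zero. I would argue that setting $T_{ij}=0$ either kills one of the three terms of the trinomial---reducing $X_S$ to (an open part of) a product of a coordinate subspace with a binomial hypersurface, where finiteness of torus-plus-$\Ga$ orbits is elementary---or leaves the trinomial structure intact on fewer variables, where an inductive application of the same argument applies.

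Assembling these pieces, the proof concludes as follows: there are finitely many strata; on each stratum the group generated by $\TT$ and the surviving one-parameter additive subgroups acts with finitely many orbits because it eliminates the single complexity-one parameter of the $\TT$-action; and every $\Aut_{alg}(X)$-orbit is a union of such orbits across strata but, being locally closed and the stratification being $\Aut_{alg}(X)$-stable up to the permutations in $\SS(X)$, the total count remains finite. The only quantitative bookkeeping needed is to check that the number of strata and the number of torus-plus-$\Ga$ orbits on each are each bounded in terms of the exponents $l_{ij}$, which is a finite combinatorial verification rather than a conceptual difficulty.
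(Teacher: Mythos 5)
Your proposal follows essentially the same route as the paper's proof: stratify $X$ by the vanishing pattern $S$ of the coordinates (only finitely many possible $S$), then show each stratum meets only finitely many orbits by combining $\TT$ with the explicit LNDs $\gamma_{ij}$, $\delta_{i\pm}$ on strata where they survive, while the degenerate strata (where a monomial vanishes and the LNDs die) are handled by $\TT$ alone via the binomial relation. This is exactly the paper's case analysis (Case 1: $l_{21}=1$; Case 2: the even-exponent case, invoking Lemmas 14--15 of \cite{Ga}), so your outline is correct and matches it.
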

\begin{proof}
Let $S$ be a subset of the set of variables $\{T_{ij}\}$.  Let us take $\alpha$ and $\beta$ such points that on $X$ that $T_{ij}(\alpha)=0\Leftrightarrow T_{ij}\in S$ and $T_{ij}(\beta)=0\Leftrightarrow T_{ij}\in S$. Note that $S$ can be chosen by finite number of variants.  

Case 1) $l_{21}=1$. 

Suppose all $T_{2j}, j\geq 2$ does not belong to $S$. Then,  there is $t\in \TT$ such that 
$T_{2j}(t\cdot \alpha)=T_{2j}(\beta)$. Applying composition $\varphi$ of $\exp(u_{ij}\gamma_{ij})$ we can do that on $\varphi(t\cdot\alpha)$ and $\beta$ all functions $T_{0i}$ and $T_{1i}$ give the same values. Since $\varphi$ does not change $T_{2j}, j\geq 2$, we have that  these functions are also coincide on $\varphi(t\cdot\alpha)$ and $\beta$. Therefore, $T_{21}$ also coincides on this points. That is $\varphi(t\cdot\alpha)=\beta$. Remark that this case follows from~\cite[Lemma~14]{Ga}.

Suppose there is $j\geq 2$ such that $T_{2j}\in S$. If both $T_1^{l_1}$ and $T_2^{l_2}$ have at least one variable in $S$, then $\alpha$ can be taken to $\beta$ by an element of $\TT$. Otherwise we can do by an element of $\TT$ that all functions except $T_{11}$ coincide on $t\cdot\alpha$ and $\beta$. Since $(T_1^{l_1}+T_2^{l_2})(\beta)=0$, there are finite opportunities for $T_{11}(\beta)$. Hence, there are finite number of $\Aut_{alg}(X)$-orbits with this $S$.

Case 2) $$X=\VV\left(T_{01}^2T_{02}^{2m_{02}}\ldots T_{0n_0}^{2m_{0n_0}}-T_{11}^2T_{12}^{2m_{12}}\ldots T_{1n_1}^{2m_{1n_1}}-T_2^{l_2}\right). $$ 
Suppose all $T_{0j}, j\geq 2$ and $T_{1j}, j\geq 2$ does not belong to $S$. Then $\alpha$ can be taken to $\beta$ by $\SAut(X)$, see~\cite[Lemma~15]{Ga}. If one or two monomials are zero we proceed by the same way as in Case 1.

\end{proof}
\begin{cor}\label{fo}
Let $X$ be a nonrigid trinomial hypersurface. Then the number of $\Aut(X)$-orbits is finite.
\end{cor}

\begin{re}
It is easy to see, that all semi-rigid trinomial hypersurfaces has the form $xy^a+z^b+1$ with  $a,b>1$. Indeed, in all other cases of nonrigid trinomial hypersurfaces we have at least two LNDs of the form $\gamma_{ij}$ or $\delta_{i\pm}$. Hypersurfaces of this form are partiqular cases of Danielewski surfaces. Their automorphism groups were described in~\cite{ML}. From this discription follows that this surfaces are indeed semi-rigid. See also Section~\ref{msms} for description of orbits of a class of trinomial surfaces containing this surfaces. From results of Section~\ref{msms} follows that we have two orbits: $\{y=0\}$ and $\{y\neq 0\}$.
\end{re}

Let $S$ be a subset of the set of variables $\{T_{ij}\}$. It can be subdivided onto union of $S_0\cup S_1\cup S_2$, where $S_a=S\cap\{ T_{aj}\}$  For each $S$ let us consider the subset $U(S)\subset X$ given by 
$$
U(S)=\{\alpha\in X\mid T_{ij}(\alpha)=0\Leftrightarrow T_{ij}\in S\}.
$$
If $S$ contains at least one variable from each monomial, one can see that $U(S)$ is a $\TT$-orbit. In particular, each $U(S)$ consisting of singular points is a $\TT$-orbit. 
Denote

$$X(S)=\bigcup_{S\subseteq P}U(P).$$  
It is easy to see that $X(S)$ is the closure of $U(S)$. I.e. $X(S)=\VV(I(S))$, where $I(S)$ is the ideal of $\KK[X]$, generated by $S$.
Each irreducible component of the singular locus $X^{sing}$ is a subset $X(S)$, where $S$ for each $i$ contains either a unique variable $T_{ij}$ with $l_{ij}\geq 2$, or two variables $T_{ij}$ and $T_{ik}$, where $l_{ij}=l_{ik}=1$.  

\begin{de}
Suppose 
$$T_i^{l_i}=T_{i1}\ldots T_{ik}T_{i,k+1}^{l_{i,k+1}}\ldots T_{in_i}^{l_{in_i}}, l_{ij}\geq 2.$$ 
Let $\varnothing\neq A_i\subseteq\{T_{i,k+1},\ldots ,T_{in_i}\}$. Put $M(A_i)=\{A_i,A_i\cup T_{ij}| j\leq k\}.$
We say that two sets $S=S_0\cup S_1\cup S_2$ and $P=P_0\cup P_1\cup P_2$ are {\it linked} if for each $i$ either $S_i=P_i$, or $S_i,P_i\in M(A_i)$ for some $A_i$
\end{de}
\begin{prop}\label{linn}
 If two sets $S$ and $P$ are not linked, then $N(S)\neq N(P)$ and $U(S)$ can not be glued with $U(P)$ by $\Aut_{alg}(X)$.
\end{prop}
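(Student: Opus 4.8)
The plan is to separate the strata by how they meet the singular locus, exploiting that $\Aut_{alg}(X)$, being generated by connected groups, cannot permute the irreducible components of $X^{sing}$. The separating invariant is $N(S)$, the set of irreducible components of $X^{sing}$ containing the closure $X(S)=\overline{U(S)}$. By the description of $X^{sing}$ recalled above, these components are the $X(S^*)$ for which $S^*\subseteq S$ and $S^*$ has, in each monomial $i$, either one variable $T_{ij}$ with $l_{ij}\ge 2$ or two variables $T_{ia},T_{ib}$ with $l_{ia}=l_{ib}=1$. Since $X(S^*)=\VV(I(S^*))$ and a point of $U(S)$ vanishes exactly on $S$, one has $U(S)\subseteq X(S^*)\iff S^*\subseteq S$; thus $N(S)$ depends only on $S$ and not on a chosen point.

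First I would verify that $N$ is constant on $\Aut_{alg}(X)$-orbits. A regular automorphism maps $X^{sing}$ onto itself and so permutes its finitely many irreducible components, giving a homomorphism from $\Aut(X)$ to the symmetric group on these components. Every connected algebraic subgroup lands in the identity, hence the subgroup $\Aut_{alg}(X)$ they generate fixes each component $C$ setwise. Therefore, if some $\varphi\in\Aut_{alg}(X)$ carries a point of $U(S)$ to a point of $U(P)$, then for each singular component $C=X(S^*)$ we get $S^*\subseteq S\iff S^*\subseteq P$, i.e. $N(S)=N(P)$. Contrapositively, $N(S)\neq N(P)$ rules out any gluing of $U(S)$ with $U(P)$ by $\Aut_{alg}(X)$, which is the second assertion. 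It remains to prove the combinatorial implication that non-linked sets have distinct $N$.

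For the combinatorics, write $B_i(S)$ for the variables of $S_i$ of power $\ge 2$ and $C_i(S)$ for those of power $1$. A singular datum $S^*$ lies under $S$ exactly when, in each monomial, it selects a variable of $B_i(S)$ or a two-element subset of $C_i(S)$; since for the strata at hand every monomial admits such a choice, $N(S)$ is the product over $i$ of these nonempty choice-sets, and $N(S)=N(P)$ holds iff for every $i$ we have $B_i(S)=B_i(P)$ and $C_i(S),C_i(P)$ have the same family of two-element subsets. Now assume $S,P$ are not linked and fix an index $i$ with $S_i\neq P_i$ for which $S_i,P_i$ lie in no common $M(A_i)$. If $B_i(S)\neq B_i(P)$ the invariants already differ. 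Otherwise $B_i(S)=B_i(P)$, so $C_i(S)\neq C_i(P)$, and the failure of the $M(A_i)$-condition forces one of $C_i(S),C_i(P)$ to contain at least two variables; as a set of size $\ge 2$ is recovered from its two-element subsets, these families differ, producing a singular component under exactly one of $X(S),X(P)$. Hence $N(S)\neq N(P)$ in all cases.

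The substantive content is the invariance step, which is pleasantly robust: it uses nothing about the explicit derivations $\gamma_{ij}$ or $\delta_{i\pm}$, only that connectedness prevents $\Aut_{alg}(X)$ from moving a component of $X^{sing}$. The point demanding care is matching ``linked'' with equality of $N$, and this is where I expect the main obstacle to lie. The match relies on restricting to the singular strata: this guarantees that each monomial factor of $N(S)$ is nonempty (so that equality of $N$ can be tested monomial by monomial) and it excludes the degenerate configuration in which $B_i=\varnothing$ and a single power-one variable is moved, which would give equal $N$ yet violate linkedness. Pinning down that the $M(A_i)$-patterns are exactly the changes preserving every incident singular component is the step I would write out in full detail.
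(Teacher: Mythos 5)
Your proof is correct and takes essentially the same route as the paper's: both rest on the observation that $\Aut_{alg}(X)$, being generated by connected algebraic subgroups, fixes every irreducible component of $X^{sing}$ setwise, hence preserves the invariant $N$, and then on the combinatorial fact that non-linked singular strata have distinct $N$. The only difference is presentational: where the paper packages the combinatorics into the auxiliary sets $D(S)$ and $L(S)$ (asserting without proof that $L(S)$ is the union of the $U(P)$ with $P$ linked to $S$), you verify directly, via the $B_i$/$C_i$ analysis and two-element subsets, that for singular strata $N(S)=N(P)$ forces $S$ and $P$ to be linked --- thereby writing out explicitly a step the paper leaves implicit.
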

\begin{proof}
Let us prove that each  irreducible component of  $X^{sing}$ is $\Aut_{alg}(X)$-invariant. The group $\Aut_{alg}(X)$ is generated by one-parameter subgroups isomorphic to $(\KK,+)$ and $(\KK^\times,\cdot)$. Suppose there exists a one-parameter subgroup $\Lambda$ such that one of its orbits $\Lambda\cdot \beta$ nontrivially intersects but is not contained in a component. Each automorphism from $\Lambda$ permutes irreducible components of $X^{sing}$. Then $\Lambda\cdot \beta$ is covered by all components $V_i$ such that $\Lambda\cdot \beta\nsubseteq V_{i}$. This contradicts to irreducibility of $\Lambda$. So, we have proved that a point in an irreducible component $V$ can not be moved by $\Aut_{alg}(X)$ in a point not in $V$. For each $U(S)\subseteq X^{sing}$ we denote by $N(S)$ the set of irreducible components of $X^{sing}$ containing $U(S)$. If $N(S)\neq N(P)$, then $U(S)$ and $U(P)$ are not contained in one $\Aut_{alg}(X)$-orbit. 

Denote by $D(S)$ intersection of all elements of $N(S)$. And consider
$$
L(S)=D(S)\setminus\bigcup_{D(P)\subsetneq D(S)} D(P).
$$
Then $L(S)$ is the union of $U(P)$, where $P$ is linked with $S$.
\end{proof}

\begin{cor}
If a $\TT$-orbit corresponds to such a set $S$ that there are no linked sets, then this $\TT$-orbit is $\Aut_{alg}(X)$-orbit.
\end{cor}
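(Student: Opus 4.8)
The plan is to obtain this as an immediate consequence of Proposition~\ref{linn}, using nothing more than the fact that the torus $\TT$ is a connected algebraic subgroup of $\Aut(X)$ and hence $\TT\subseteq\Aut_{alg}(X)$. First I would fix a point $\alpha$ in the given $\TT$-orbit $U(S)$ and let $O$ denote the $\Aut_{alg}(X)$-orbit of $\alpha$. Since $\TT\subseteq\Aut_{alg}(X)$ and $U(S)=\TT\cdot\alpha$ by the hypothesis that $U(S)$ is a $\TT$-orbit, we get the inclusion $U(S)\subseteq O$ for free. The whole task then reduces to proving the reverse inclusion $O\subseteq U(S)$.

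To that end I would take an arbitrary point $\beta\in O$ and record its vanishing type, that is the set $P=\{T_{ij}\mid T_{ij}(\beta)=0\}$, so that by definition $\beta\in U(P)$. Because $\alpha$ and $\beta$ lie in one and the same $\Aut_{alg}(X)$-orbit, an element of $\Aut_{alg}(X)$ carries a point of $U(S)$ to a point of $U(P)$; in the terminology of Proposition~\ref{linn} this is exactly the statement that $U(S)$ and $U(P)$ are \emph{glued} by $\Aut_{alg}(X)$. The contrapositive of Proposition~\ref{linn} then forces $S$ and $P$ to be linked.

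Now I would invoke the hypothesis of the corollary, read as: no set other than $S$ is linked with $S$. This leaves only the possibility $P=S$, i.e. $\beta\in U(S)$. As $\beta\in O$ was arbitrary, this yields $O\subseteq U(S)$; combining it with the inclusion $U(S)\subseteq O$ obtained above gives $O=U(S)$, so the $\TT$-orbit $U(S)$ is already a single $\Aut_{alg}(X)$-orbit, as claimed.

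The argument is short precisely because all the substance is packaged into Proposition~\ref{linn}, so the point that needs care is purely logical rather than geometric. Proposition~\ref{linn} supplies \emph{linkedness} as a \emph{necessary} condition for two strata to be glued, hence it is its contrapositive that must be applied, and the phrase ``there are no linked sets'' has to be interpreted as ``no $P\neq S$ is linked with $S$'' — which is exactly what eliminates every candidate stratum that could enlarge the orbit. A secondary simplification worth noting is that the pointwise reasoning with the type $P$ of $\beta$ makes it unnecessary to appeal to the general fact that an $\Aut_{alg}(X)$-orbit is a union of $\TT$-orbits, and it also automatically covers the case where $U(P)$ happens not to be a single $\TT$-orbit.
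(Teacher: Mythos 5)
Your proof is correct and is exactly the derivation the paper intends: the corollary is stated without proof precisely because it is the immediate consequence of Proposition~\ref{linn} that you spell out, namely the inclusion $U(S)\subseteq \Aut_{alg}(X)\cdot\alpha$ coming from $\TT\subseteq\Aut_{alg}(X)$, combined with the contrapositive of the proposition (glued implies linked) applied to the vanishing type $P$ of an arbitrary point of the orbit. Your reading of ``there are no linked sets'' as ``no $P\neq S$ is linked with $S$'' (every $S$ being trivially linked with itself) is also the paper's intended usage, as the application in the proof of Theorem~\ref{sing} confirms.
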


\begin{theor}\label{sing}
Assume all $l_{ij}\geq 2$. Then
all singular $\Aut_{alg}$-orbits of $X$ coincide with $\TT$-orbits.
\end{theor}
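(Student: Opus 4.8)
The plan is to show that every singular $\TT$-orbit $U(S)$ is already a full $\Aut_{alg}(X)$-orbit by proving it cannot be glued with any other $\TT$-orbit. Since all $l_{ij}\geq 2$, the key combinatorial observation is that \emph{no two distinct sets can be linked}: in the definition of linked sets the freedom $S_i,P_i\in M(A_i)$ requires the presence of a variable $T_{ij}$ with $l_{ij}=1$ (the indices $j\leq k$ in $T_i^{l_i}=T_{i1}\ldots T_{ik}T_{i,k+1}^{l_{i,k+1}}\ldots$), and under our hypothesis $k=0$ so that $M(A_i)=\{A_i\}$ is a singleton for every $i$. Hence $S$ linked with $P$ forces $S_i=P_i$ for all $i$, i.e.\ $S=P$. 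I would state this explicitly as the first step.

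Next I would invoke Proposition~\ref{linn}: if $S$ and $P$ are not linked then $U(S)$ and $U(P)$ lie in different $\Aut_{alg}(X)$-orbits. Combined with the first step, for a singular orbit $U(S)$ every other $\TT$-orbit $U(P)$ with $P\neq S$ is unlinked to $S$, so no $U(P)$ can be glued to $U(S)$ by $\Aut_{alg}(X)$. Here I must be a little careful about which $S$ index singular orbits: as noted in the excerpt, each irreducible component of $X^{\mathrm{sing}}$ is an $X(S)$ where for each $i$ the set $S_i$ is either a single variable $T_{ij}$ with $l_{ij}\geq 2$, or a pair $T_{ij},T_{ik}$ with $l_{ij}=l_{ik}=1$; under the hypothesis $l_{ij}\geq 2$ only the first alternative survives, so singular $\TT$-orbits are exactly those $U(S)$ with $S$ meeting every monomial. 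Such $U(S)$ is a $\TT$-orbit by the remark following the definition of $U(S)$.

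Finally I would assemble these facts. The $\Aut_{alg}(X)$-orbit of a point $\beta\in U(S)$ is a union of $\TT$-orbits (since $\TT\subseteq\Aut_{alg}(X)$), and by the above no $\TT$-orbit other than $U(S)$ itself can occur in it; therefore the $\Aut_{alg}(X)$-orbit equals $U(S)$, which is the desired conclusion. The main obstacle I anticipate is not the gluing argument, which reduces cleanly to the combinatorics of linked sets, but rather the bookkeeping needed to confirm that the hypothesis $l_{ij}\geq 2$ genuinely collapses $M(A_i)$ to a single set for \emph{every} $i$ simultaneously, including the monomial $T_0^{l_0}$ when $n_0\neq 0$, and to make sure the trivial cases (one or two monomials vanishing, or a factor of $X$ being isomorphic to affine space) are correctly absorbed into this framework rather than producing spurious extra links.
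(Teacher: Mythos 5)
Your proposal is correct and follows essentially the same route as the paper: the paper's proof is the one-line observation that all $l_{ij}\geq 2$ forces $k=0$ in each monomial, so $M(A_i)=\{A_i\}$ and no two distinct sets are linked, after which Proposition~\ref{linn} (and its corollary that an unlinked singular $\TT$-orbit $U(S)$ is an $\Aut_{alg}(X)$-orbit) finishes the argument. Your write-up simply makes explicit the same steps --- singular $U(S)$ are $\TT$-orbits, $\Aut_{alg}(X)$-orbits are unions of $\TT$-orbits, and unlinked orbits cannot be glued --- that the paper leaves implicit.
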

\begin{proof}
Since all  $l_{ij}\geq 2$, there are no linked sets.
\end{proof}

For factorial varieties conditions of Theorem~\ref{sing} imply rigidity of $X$. But non-factorial variety with these conditions can be even flexible. Varieties of type $H_5$ and $H_2\setminus H_4$ satisfy these conditions. For a flexible trinomial variety $X$ all regular points of $X$ form one $\SAut(X)$-orbit. So, description of singular $\Aut_{alg}(X)$-orbits gives a description of all $\Aut_{alg}(X)$-orbits. We obtain the following theorem.
\begin{theor}
For varieties of type $H_5$ and $H_2\setminus H_4$ all regular points form one $\Aut_{alg}(X)$-orbit and singular $\Aut_{alg}(X)$-orbits are $\TT$-orbits.
\end{theor}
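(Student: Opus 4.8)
The plan is to treat the regular locus and the singular locus by two separate, already-available results, so that the theorem becomes a matter of assembling them together with one combinatorial check.

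For the regular locus I would first observe that both families are flexible: the varieties of type $H_5$ are flexible by \cite[Theorem~4]{Ga}, and a variety of type $H_2\setminus H_4$ is in particular of type $H_2$, hence also flexible by the same result. Each such $X$ has $n_0\geq 1$ (there is a monomial $T_{01}^2\cdots$), so $\dim X=n-1\geq 2$. Therefore Proposition~\ref{aaa} applies and $\SAut(X)$ acts transitively on $X^{\mathrm{reg}}$, i.e. $X^{\mathrm{reg}}$ is a single $\SAut(X)$-orbit. Since $\SAut(X)\subseteq\Aut_{alg}(X)$ and an automorphism cannot move a regular point to a singular one, $X^{\mathrm{reg}}$ is $\Aut_{alg}(X)$-invariant and a fortiori a single $\Aut_{alg}(X)$-orbit. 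This disposes of the first assertion.

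For the singular locus I would verify that all exponents satisfy $l_{ij}\geq 2$ and then quote Theorem~\ref{sing}. For $H_5$ every exponent equals $2m_{ij}\geq 2$, so this is immediate. For $H_2\setminus H_4$ the first two monomials have all exponents equal to $2$, and the only thing to check is that the third monomial $T_2^{l_2}$ has no variable of exponent $1$; indeed, were some $l_{2j}=1$, then relabelling that variable as the distinguished exponent-one factor $T_{01}$ of type $H_4$ (the two remaining monomials already having all exponents even) would exhibit $X$ as a variety of type $H_4$, contradicting $X\in H_2\setminus H_4$. Hence all $l_{ij}\geq 2$, Theorem~\ref{sing} gives that every singular $\Aut_{alg}(X)$-orbit is a $\TT$-orbit, and the two assertions together prove the theorem.

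The only non-formal step is the combinatorial identification of $H_2\cap H_4$ in the preceding paragraph: one must read the shapes of $H_2$ and $H_4$ carefully enough to see that belonging to $H_2$ but not to $H_4$ is exactly the condition ``the third monomial contains no variable with exponent $1$'', which is what feeds Theorem~\ref{sing}. Everything else is a direct appeal to \cite[Theorem~4]{Ga}, Proposition~\ref{aaa}, and Theorem~\ref{sing}. I would also note the degenerate possibility that $X$ is smooth, in which case the statement about singular orbits is vacuous and the single $\Aut_{alg}(X)$-orbit $X^{\mathrm{reg}}=X$ already accounts for all of $X$.
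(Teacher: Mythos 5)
Your proof is correct and takes essentially the same approach as the paper: flexibility of types $H_2$ and $H_5$ (via \cite[Theorem~4]{Ga} together with Proposition~\ref{aaa}) shows that $X^{\mathrm{reg}}$ is a single $\SAut(X)$-orbit, and the verification that all exponents satisfy $l_{ij}\geq 2$ lets Theorem~\ref{sing} handle the singular orbits. The dimension check and the combinatorial identification of $H_2\setminus H_4$ with the condition that $T_2^{l_2}$ has no exponent-one variable are details the paper leaves implicit, but your argument matches its proof.
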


We have proved that all singular $\TT$-orbits, which are not linked to any other, are $\Aut_{alg}(X)$-orbits.  But we do not have any examples of two linked $\TT$-orbits, that can be glued by $\Aut_{alg}(X)$. So, it is natural to state the following conjecture.

\begin{conj}
All singular $\Aut_{alg}$-orbits of a trinomial hypersurface coincide with $\TT$-orbits. 
\end{conj}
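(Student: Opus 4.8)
The statement splits into two independent halves, and the plan is to derive each from a result already available in the excerpt after one short preliminary verification.

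For the first half, that all regular points form a single $\Aut_{alg}(X)$-orbit, I would proceed as follows. Varieties of type $H_5$ and of type $H_2\setminus H_4$ are in particular of type $H_2$ or $H_5$, hence among the types $H_1$--$H_5$, which are flexible by the cited proposition (\cite[Theorem~4]{Ga}). Each such $X$ is irreducible (\cite[Theorem~1.2(i)]{HW}), and since all of $n_0,n_1,n_2$ are positive for these two families one has $n\geq 3$, so $\dim X=n-1\geq 2$. Thus Proposition~\ref{aaa} applies and gives that $\SAut(X)$ acts transitively on $X^{\mathrm{reg}}$, so $X^{\mathrm{reg}}$ is a single $\SAut(X)$-orbit. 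Because $\SAut(X)\subseteq\Aut_{alg}(X)$ and any automorphism sends regular points to regular points, the $\Aut_{alg}(X)$-orbit of a regular point is contained in $X^{\mathrm{reg}}$ while already containing the corresponding $\SAut(X)$-orbit, which is all of $X^{\mathrm{reg}}$; hence $X^{\mathrm{reg}}$ is exactly one $\Aut_{alg}(X)$-orbit.

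For the second half, that singular $\Aut_{alg}(X)$-orbits coincide with $\TT$-orbits, the plan is to reduce to Theorem~\ref{sing} by checking that in both families every exponent satisfies $l_{ij}\geq 2$. For type $H_5$ this is immediate, since each exponent is either $2$ or $2m_{ij}$ with $m_{ij}\geq 1$, hence $\geq 2$. For type $H_2\setminus H_4$ the first two monomials have all exponents equal to $2$, so only the third monomial $T_2^{l_2}$ needs to be examined. Here I would argue by contradiction: if some $l_{2j}=1$, then relabelling the variables so that this exponent-$1$ variable becomes the leading factor $T_{01}$ of its monomial puts the equation into the $H_4$ form, with $T_2^{l_2}$ playing the role of the monomial of leading exponent $1$ and the two all-even, leading-exponent-$2$ monomials playing the roles of the even monomials. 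This contradicts $X\notin H_4$. Therefore all $l_{2j}\geq 2$, so all $l_{ij}\geq 2$, and Theorem~\ref{sing} yields that every singular $\Aut_{alg}(X)$-orbit is a $\TT$-orbit.

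The bulk of the argument is thus bookkeeping, with the first half being a direct invocation of flexibility and the second a direct invocation of Theorem~\ref{sing}. The only genuinely delicate point, which I expect to be the main (if modest) obstacle, is the implication $H_2\setminus H_4\Rightarrow l_{ij}\geq 2$ for all $i,j$: one must be certain that a single exponent-$1$ variable occurring anywhere in $T_2^{l_2}$ already forces membership in $H_4$, and this rests precisely on the observation that the remaining two monomials of an $H_2$ variety automatically have the all-even, leading-exponent-$2$ shape demanded by the $H_4$ pattern, so no further hypothesis on them is needed.
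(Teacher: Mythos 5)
You have proved the wrong statement. The statement at hand is the paper's conjecture, which asserts that every singular $\Aut_{alg}(X)$-orbit is a $\TT$-orbit for an \emph{arbitrary} trinomial hypersurface; what your argument establishes is the theorem immediately preceding it in the paper, which concerns only the types $H_5$ and $H_2\setminus H_4$. (Your ``first half,'' about regular points forming one orbit, is not part of the conjecture at all.) Your bookkeeping is correct as far as it goes: an $H_2$ equation whose third monomial contains an exponent-$1$ variable is indeed of type $H_4$, since its two other monomials already have the all-even, leading-exponent-$2$ shape, so $H_2\setminus H_4$ forces all $l_{ij}\geq 2$ and Theorem~\ref{sing} applies. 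But Theorem~\ref{sing} is exactly the trivial case of the conjecture: when all $l_{ij}\geq 2$ there are no linked sets, and Proposition~\ref{linn} alone separates all singular $\TT$-orbits. The entire content of the conjecture lies in the remaining cases, where some exponent equals $1$ and linked $\TT$-orbits genuinely occur; your reduction cannot reach them by construction.

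Concretely, for $X_2=\VV(xy^2+z^3+s^3)$ the singular locus $\{y=z=s=0\}$ is irreducible and is the union of the two linked $\TT$-orbits $\{(\lambda\neq 0,0,0,0)\}$ and $\{(0,0,0,0)\}$; Proposition~\ref{linn} does not distinguish them, and nothing in your proposal rules out an element of $\Aut_{alg}(X)$ gluing them. This is where the paper has to do real work: for the class of Section~\ref{msms} it computes $ML(X)=\KK[y_1,\ldots,y_m]$ (Proposition~\ref{p1}), shows via Lemma~\ref{nap} and Remark~\ref{uuu} that all $\eta_i$-homogeneous LNDs have degree at least $a_i$, and then uses a divisibility argument to conclude that $\partial(x)$ and $\delta(x)$ vanish on $\OO_2(M,P,Q)$ for every LND $\partial$ and every semisimple derivation $\delta$, which is what prevents $\OO_1(M,P,Q)$ and $\OO_2(M,P,Q)$ from being glued (Theorem~\ref{AA}). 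Note finally that the conjecture is not proved in the paper in full generality at all --- it is stated as open and verified only for special classes --- so no argument that merely invokes Theorem~\ref{sing} (whose proof is the one-line observation that no linked sets exist) could possibly suffice.
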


In the next section we prove this conjecture for a class of trinomial varieties of special form admitting linked $\TT$-orbits. 

\begin{ex}
Consider three trinomial hypersurfaces
$$
X_1=\VV(y^2+z^3+s^3)
$$
$$
X_2=\VV(xy^2+z^3+s^3)
$$
$$
X_3=\VV(x_1x_2y^2+z^3+s^3)
$$
In $X_1$  there are no linked $\TT$-orbits. $X_1$ has the unique singular point $(0,0,0)$, which is a $\TT$-orbit. The variety $X_2$ has irreducible singular locus $\{y=z=s=0\}$ consisting on two linked $\TT$-orbits: $\{(\lambda\neq 0,0,0,0)\}$ and $(0,0,0,0)$. Proposition~\ref{linn}
does not separate these two $\TT$-orbits. But Theorem~\ref{AA} will do it. The variety $X_3$ has two irreducible components of $X^{sing}$: $V_1=\{y=z=s=0\}$ and $V_2=\{x_1=x_2=z=s=0\}$. $X_3^{sing}$ consists of the following~5 $\TT$-orbits: $\OO_1=\{(0,0,0,0,0)\}$, $\OO_2=\{(\lambda\neq 0,0,0,0,0)\}$, $\OO_3=\{(0,\mu\neq0,0,0,0)\}$, $\OO_4=\{(\lambda\neq 0,\mu\neq0,0,0,0)\}$, $\OO_5=\{(0,0,\tau\neq 0, 0 ,0)\}$. Orbits $\OO_2,\OO_3$ and $\OO_4$ are linked because they are contained in $V_1\setminus V_2$. And we do not know if these orbits can be glued by $\Aut_{alg}(X)$.
\end{ex}

\section{Unique variable with power one}\label{msms}
In this section we describe $\Aut(X)$-orbits of a nonflexible trinomial hypersurface with the unique variable with power one in the trinom.

Let 
$$
X=\mathbb{V}(xy_1^{a_1}\ldots y_m^{a_m}+z_1^{b_1}\ldots z_p^{b_p}+s_1^{c_1}\ldots s_q^{c_q}),  a_i>1, m\geq 1, p\geq 1, q\geq 0.
$$
If $q=0$ we have a trinomial hypersurface with a free term.
Assume that $X$ is not of types $H_3$ and $H_4$.
It is convenient to have a new notations for LNDs $\gamma_{ij}$  introduced in Section~\ref{TH} in terms of these section.
$$D_i(x)=\frac{\partial (z_1^{b_1}\ldots z_p^{b_p})}{\partial z_i}, D_i(z_i)=-y_1^{a_1}\ldots y_m^{a_m}, D_i \text{ is zero on all other variables;}$$
$$E_j(x)=\frac{\partial (s_1^{c_1}\ldots s_q^{c_q})}{\partial s_j}, E_j(s_j)=-y_1^{a_1}\ldots y_m^{a_m}, E_j \text{ is zero on all other variables.}$$
It is easy to see that 
$$\Ker D_i=\KK[y_1,\ldots, y_m, z_1,\ldots, z_{i-1},z_{i+1},\ldots, z_p, s_1,\ldots, s_q],$$
$$\Ker E_j=\KK[y_1,\ldots, y_m, z_1,\ldots, z_p, s_1,\ldots, s_{j-1},s_{j+1},\ldots,  s_q].$$
If $q=0$ there are no $E_j$.

In \cite[Proposition~2]{G} the following proposition is proved. 
\begin{prop}\label{p1}
$ML(X)=\KK[y_1,\ldots, y_m].$
\end{prop}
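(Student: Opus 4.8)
The plan is to establish the two inclusions $ML(X)\subseteq\KK[y_1,\ldots,y_m]$ and $\KK[y_1,\ldots,y_m]\subseteq ML(X)$ separately. For the first inclusion I would use only the explicit derivations: since $ML(X)$ lies in the kernel of every LND, it is contained in $\bigcap_i\Ker D_i\cap\bigcap_j\Ker E_j$. From the descriptions recorded above, every $D_i$ moves $x$ (as $D_i(x)=\partial(z_1^{b_1}\ldots z_p^{b_p})/\partial z_i\neq 0$) and kills a distinct $z_i$, so $\bigcap_i\Ker D_i=\KK[y_1,\ldots,y_m,s_1,\ldots,s_q]$; symmetrically $\bigcap_j\Ker E_j=\KK[y_1,\ldots,y_m,z_1,\ldots,z_p]$. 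Their intersection is $\KK[y_1,\ldots,y_m]$, giving $ML(X)\subseteq\KK[y_1,\ldots,y_m]$. (If $q=0$ there are no $E_j$ and $\bigcap_i\Ker D_i=\KK[y_1,\ldots,y_m]$ already.)

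For the reverse inclusion I would realize $X$ as a suspension and invoke the maximal-element machinery of Section~3. Writing $X=\mathrm{Susp}(\KK^{p+q},f,1,a_1,\ldots,a_m)$ with $f=-(z_1^{b_1}\ldots z_p^{b_p}+s_1^{c_1}\ldots s_q^{c_q})$ and suspension variables $x,y_1,\ldots,y_m$ of exponents $1,a_1,\ldots,a_m$, the variable $x$ plays the role of the distinguished variable ``$y_1$'' of that section. By Lemma~\ref{leml}(b), once $x$ is shown to be a maximal element of $\KK[X]$, each $y_k$ lies in $ML(X)$; since $ML(X)$ is a subalgebra this yields $\KK[y_1,\ldots,y_m]\subseteq ML(X)$ and hence equality.

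The crux is therefore to prove that $x$ is maximal, and here I would apply Lemma~\ref{cm}: it suffices to find $n$ for which $Z=\mathrm{Susp}(\KK^{p+q},f,n,a_1,\ldots,a_m)=\VV(x^n y_1^{a_1}\ldots y_m^{a_m}+z_1^{b_1}\ldots z_p^{b_p}+s_1^{c_1}\ldots s_q^{c_q})$ is rigid, which I would read off from Proposition~\ref{rt}. This is where the hypotheses enter. Because the first monomial of $X$ already carries the power-one variable $x$, the assumption that $X$ is not of type $H_3$ forces all $b_j\geq 2$ and all $c_k\geq 2$ (otherwise a second monomial would carry a power-one variable, giving type $H_3$ after relabeling); together with $a_i\geq 2$ and $n\geq 2$ this makes condition (1) of Proposition~\ref{rt} fail. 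Choosing $n$ odd (say $n=3$) makes the first monomial of $Z$ not fully even, so condition (2) can only be triggered by the pair $(z_1^{b_1}\ldots z_p^{b_p},\,s_1^{c_1}\ldots s_q^{c_q})$; but that pair being simultaneously fully even with an exponent equal to $2$ is precisely type $H_4$, which is excluded. Hence condition (2) also fails and $Z$ is rigid.

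I expect the main obstacle to be exactly this last step: correctly matching the exclusions of types $H_3$ and $H_4$ to the two non-rigidity conditions of Proposition~\ref{rt} for the auxiliary hypersurface $Z$, and verifying that a single choice of $n$ (odd and $\geq 2$) defeats both conditions at once. The case $q=0$ should be treated separately but is easier: then $Z$ has a free term, condition (2) of Proposition~\ref{rt} is vacuous, and rigidity reduces to all exponents being $\geq 2$, again guaranteed by the non-$H_3$ assumption.
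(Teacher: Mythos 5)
Your proof is correct and takes essentially the same route as the paper's: the inclusion $ML(X)\subseteq\KK[y_1,\ldots,y_m]$ from the kernels of the $D_i$ and $E_j$, maximality of $x$ via Lemma~\ref{cm} applied to $\VV(x^3y_1^{a_1}\ldots y_m^{a_m}+z_1^{b_1}\ldots z_p^{b_p}+s_1^{c_1}\ldots s_q^{c_q})$, which is rigid by Proposition~\ref{rt}, and then the suspension machinery (Lemma~\ref{leml}(b), equivalently Corollary~\ref{coco}) for the reverse inclusion. Your explicit matching of the non-$H_3$ and non-$H_4$ hypotheses to conditions (1) and (2) of Proposition~\ref{rt}, including the role of the odd exponent $3$ and the separate free-term case $q=0$, is exactly the verification the paper leaves implicit, and it is correct.
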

\begin{proof}
If we consider all the LNDs $D_i$ and $E_j$, then intersection of their kernels equals $\KK[y_1,\ldots, y_m]$. Hence, $ML(X)\subseteq \KK[y_1,\ldots, y_m]$. 
By Proposition~\ref{rt} the variety
$$\mathbb{V}(x^3y_1^{a_1}\ldots y_m^{a_m}+z_1^{b_1}\ldots z_p^{b_p}+s_1^{c_1}\ldots s_q^{c_q}=0)$$
is rigid. Therefore, by Lemma~\ref{cm} the element $x\in\KK[X]$ is maximal. Therefore, Corollary~\ref{coco} implies the goal.
\end{proof}

In notations of this section, we have $\eta_i=\xi_{1i}$ is the following $\ZZ$-grading:
$\deg(x)=-a_i$, $\deg y_i=1$, degrees of all other variables are zero. By Lemma~\ref{nap} each $\eta_i$-homogeneous LND has positive degree.
\begin{re}\label{uuu}
If $\partial$ is a $\eta_i$-homogeneous LND, its degree not only positive, see Lemma~\ref{nap}, but even $\geq a_i$. Otherwise $\deg(\partial(x))<0$ and hence $\partial(x)=0$.
\end{re}

Now let us describe $Aut_{alg}(X)$-orbits on $X$.
\begin{theor}\label{AA}
There are the following $Aut_{alg}(X)$-orbits on $X$.
\begin{itemize}
\item All points of $X$, where all $y_1,\ldots, y_m$ are nonzero form one $Aut_{alg}(X)$-orbit $\OO$;
\item Let $\varnothing\neq M\subseteq \{1,2,\ldots, m\}$, $d=\gcd (b_1,\ldots, b_p, c_1,\ldots, c_q)$, $\varepsilon^d=1$. Then 
$$
\OO(M,\varepsilon)=\{\text{points of X }\mid y_i=0\Leftrightarrow i\in M, z_1^\frac{b_1}{d}\ldots z_p^\frac{b_p}{d}=-\varepsilon s_1^\frac{c_1}{d}\ldots s_q^\frac{c_q}{d}\neq 0\}
$$
is an $Aut_{alg}(X)$-orbit;
\item Let $\varnothing\neq M\subseteq \{1,2,\ldots, m\}$, $\varnothing\neq P\subseteq \{1,2,\ldots, p\}$, and $\varnothing\neq Q\subseteq \{1,2,\ldots, q\}$. Then
$$
\OO_1(M,P,Q)=\{\text{points of X }\mid y_i=0\Leftrightarrow i\in M, z_j=0\Leftrightarrow j\in P, s_l=0\Leftrightarrow l\in Q, x\neq 0\}
$$
is an $Aut_{alg}(X)$-orbit and 
$$
\OO_2(M,P,Q)=\{\text{points of X }\mid y_i=0\Leftrightarrow i\in M, z_j=0\Leftrightarrow j\in P, s_l=0\Leftrightarrow l\in Q, x= 0\}
$$
is an $Aut_{alg}(X)$-orbit.
\end{itemize}
\end{theor}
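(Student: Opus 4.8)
The plan is to stratify $X$ by the vanishing pattern of its coordinates, to check that this stratification is exactly the one in the statement, and then to prove for each stratum two things separately: that it is $\Aut_{alg}(X)$-invariant (by exhibiting explicit invariants) and that $\Aut_{alg}(X)$ acts transitively on it, so that each listed nonempty set is a single orbit. Throughout I write $y^{a}=y_1^{a_1}\cdots y_m^{a_m}$, $z^{b}=z_1^{b_1}\cdots z_p^{b_p}$, $s^{c}=s_1^{c_1}\cdots s_q^{c_q}$, and $z^{b/d}=z_1^{b_1/d}\cdots z_p^{b_p/d}$, so the equation reads $xy^{a}+z^{b}+s^{c}=0$. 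Since $ML(X)=\KK[y_1,\dots,y_m]$ by Proposition~\ref{p1}, Corollary~\ref{coco} shows that each $y_i$ is killed by every $\Ga$-subgroup and scaled by every $\Gm$-subgroup; hence $M=\{i:y_i=0\}$ is an $\Aut_{alg}(X)$-invariant. This separates $M=\varnothing$ from $M\neq\varnothing$, on which $y^{a}=0$ and the equation reduces to $z^{b}+s^{c}=0$; subdividing the latter according to whether all $z_j,s_l$ are nonzero (so $z^{b},s^{c}\neq0$) or not (forcing $z^{b}=s^{c}=0$, i.e. some $z_j=0$ and some $s_l=0$) shows the three families exhaust $X$.

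For the finer invariants, on the locus with all $z_j,s_l\neq0$ I would study the ratio $R=z^{b/d}/s^{c/d}$. The torus scales it by the character $\chi(t)=\prod_j t_{z_j}^{b_j/d}\prod_l t_{s_l}^{-c_l/d}$, and $\chi^{d}$ equals the quotient of the scaling factors of $z^{b}$ and $s^{c}$, which is $1$ on $\TT$ by definition of $\HH$. Thus $\chi$ maps the \emph{connected} group $\TT$ into the finite group of $d$-th roots of unity, so $\chi\equiv1$ and $R$ is $\TT$-invariant; as moreover $D_i(z_i)=E_l(s_l)=-y^{a}=0$ here, all $\Ga$-subgroups also fix the $z_j,s_l$, so $R$ — equivalently $\varepsilon$ with $R=-\varepsilon$, $\varepsilon^{d}=1$ — is $\Aut_{alg}(X)$-invariant, separating the $\OO(M,\varepsilon)$. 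On the locus with $z^{b}=s^{c}=0$ the hypothesis that $X$ is not of type $H_3$ (all $b_j,c_l\geq2$) is used: since some $z_{j_0}=0$ with $b_{j_0}\geq2$, every $\partial_{z_i}(z^{b})$ acquires a factor $z_{j_0}^{b_{j_0}}$ or $z_{j_0}^{b_{j_0}-1}$ and vanishes, and likewise every $\partial_{s_l}(s^{c})$; hence $D_i(x)=E_l(x)=0$, and since $x$ is a $\TT$-semi-invariant the condition $x=0$ is $\Aut_{alg}(X)$-invariant, separating $\OO_1$ from $\OO_2$.

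It remains to prove transitivity. For $M=\varnothing$ the torus scales the $y_i$ freely (absorbing the constraint into $t_x$), and since $y^{a}\neq0$ each $\exp(tD_i)$, $\exp(tE_l)$ sends $z_i$, resp. $s_l$, to any value; matching $y$, then $z,s$, forces $x$ to agree, so $\OO$ is one orbit. On a fixed $\OO(M,\varepsilon)$ the coordinate $x$ is free and is reached by some $\exp(tD_i)$ (now $\partial_{z_i}(z^{b})\neq0$), while the surviving nonzero $y_i\ (i\notin M)$, $z_j$, $s_l$ are matched by $\TT$: the phantom parameters $t_{y_i}\ (i\in M)$, which do not move the point, make the monomial-matching constraint vacuous, and equality of $\varepsilon$ at source and target guarantees that the required scaling preserves $R$ and hence lies in $\TT$. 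On $\OO_1(M,P,Q)$ and $\OO_2(M,P,Q)$ all three monomials vanish at each point, so the two constraints defining $\TT$ bind only scaling factors of already-vanishing monomials; using the phantom parameters $t_{y_i},t_{z_j},t_{s_l}$ ($i\in M$, $j\in P$, $l\in Q$), and $t_x$ as well on $\OO_2$, any prescribed scaling factors are realizable, so the constraints impose nothing on the remaining real scalings and $\TT$ moves each surviving nonzero coordinate independently and transitively.

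I expect the separation, rather than the transitivity, to be the crux: the orbit count depends on $\varepsilon$ surviving as an invariant, and this works \emph{only} because $\TT$ is connected — the finite scalings in $\HH\setminus\TT=\GG$ that would glue distinct $\varepsilon$ are genuine automorphisms of $X$ but lie in the finite part and are unavailable inside $\Aut_{alg}(X)$. Conversely, the most error-prone bookkeeping will be verifying that $\TT$ really acts transitively on the degenerate strata, for which the phantom-parameter argument (that vanishing monomials free up the torus constraints) must be made precise.
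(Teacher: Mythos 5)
Your stratification, your use of Proposition~\ref{p1} and Corollary~\ref{coco} to make $M=\{i: y_i=0\}$ an $\Aut_{alg}(X)$-invariant, and your transitivity arguments (match $y$ by $\TT$, then $z,s$ by $\exp(uD_i),\exp(vE_j)$, let the equation force $x$; on $\OO(M,\varepsilon)$ the equality of $\varepsilon$ at source and target is exactly what makes the matching scaling land in the connected component $\TT$ rather than merely in $\HH$) all agree with the paper. The genuine gap is in the separation arguments. The group $\Aut_{alg}(X)$ is generated by \emph{all} $\Ga$- and $\Gm$-subgroups of $\Aut(X)$, not only by $\TT$ and the explicit subgroups $\exp(tD_i)$, $\exp(tE_l)$; yet when you argue that the $z_j,s_l$ are fixed on the locus $y_1^{a_1}\cdots y_m^{a_m}=0$ (``as moreover $D_i(z_i)=E_l(s_l)=-y^a=0$ here'') and that the condition $x=0$ cannot be destroyed (``hence $D_i(x)=E_l(x)=0$, and since $x$ is a $\TT$-semi-invariant\dots''), you verify this only for those explicit generators. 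Nothing in your argument excludes the existence of some other locally nilpotent or semisimple derivation of $\KK[X]$ moving $z_j$, $s_l$ or $x$ on these strata. What you have actually proved is that the listed sets are orbits of the subgroup generated by $\TT$, the $\exp(tD_i)$ and the $\exp(tE_l)$; that this subgroup has the same orbits as the full group $\Aut_{alg}(X)$ is precisely the content of the theorem (compare the remark following it in the paper) and is the hard part.

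Controlling the unknown one-parameter subgroups is where the paper's proof spends almost all of its effort, and it is the idea missing from your proposal. For an arbitrary LND $\partial$ one has $\partial(y_i)=0$, and --- this is the key step --- the maximality of $x$ forces every $\eta_i$-homogeneous LND to have degree at least $a_i$ (Lemma~\ref{nap}, Remark~\ref{uuu}); hence $y_1^{a_1}\cdots y_m^{a_m}$ divides $\partial(z_u)$ and $\partial(s_v)$ for \emph{every} LND $\partial$, and applying $\partial$ to the defining equation expresses $\partial(x)$ inside the ideal generated by the partial derivatives of $z_1^{b_1}\cdots z_p^{b_p}$ and $s_1^{c_1}\cdots s_q^{c_q}$, which vanishes at every point of $\OO_2(M,P,Q)$ (here the hypothesis $b_j,c_l\geq 2$ enters, as you guessed, but for arbitrary $\partial$ rather than just $D_i,E_l$). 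A parallel argument, decomposing an arbitrary semisimple derivation $\delta$ into components homogeneous for the $\ZZ^m$-grading $\eta_1\oplus\cdots\oplus\eta_m$ and using $\delta(y_i)=\lambda_i y_i$ from Lemma~\ref{leml}(c), gives $\delta(x)(\alpha)=0$ as well; only after both computations may one conclude that no element of $\Aut_{alg}(X)$ glues $\OO_1(M,P,Q)$ to $\OO_2(M,P,Q)$. Your $\varepsilon$-separation suffers from the same defect (invariance of $R$ is checked only under the explicit subgroups); the paper avoids this entirely by noting that $\OO(M)=\bigcup_\varepsilon\OO(M,\varepsilon)$ is $\Aut_{alg}(X)$-invariant and that the $\OO(M,\varepsilon)$ are its irreducible components, which a group generated by connected subgroups cannot permute --- an argument requiring no knowledge of how unknown subgroups act on $z_j,s_l$. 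Your $R$-argument could be repaired, but only by importing the same divisibility statement for arbitrary derivations, i.e.\ by the graded machinery you bypassed.
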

\begin{proof}
It is easy to see that all sets $\OO$, $\OO(M,\varepsilon)$, $\OO_1(M,P,Q)$  and $\OO_2(M,P,Q)$ cover $X$. Firstly let us check that two points from the same set $\OO$, $\OO(M,\varepsilon)$, or $\OO_i(M,P,Q)$ can be moved one to each other by $\Aut_{alg}(X)$.

Let us take $\alpha$ and $\beta$ in $\OO$. Since $y_i(\alpha)\neq 0$ and $y_i(\beta)\neq 0$, there is an element $t\in \TT$ such that $y_i(t\cdot \alpha)=y_i(\beta)$ for all $i$. Then we can apply a composition $\varphi$ of $\exp(u_i E_j)$ and $\exp (v_k D_k)$ in such a way that for all $j$ we have $z_j(\varphi(t\cdot \alpha))=z_j(\beta)$ and for all $k$ we have  $s_k(\varphi(t\cdot \alpha))=s_k(\beta)$. Note that $\varphi$ does not change $y_i$. Therefore,  $y_i(\varphi(t\cdot \alpha))=y_i(\beta)$ for all~$i$. So, points $\varphi(t\cdot \alpha)$ and $\beta$ have coinciding $y_i$, $z_j$, and $s_k$. And all $y_i$ are nonzero. Therefore, $x(\varphi(t\cdot \alpha))=x(\beta)$. Hence, $\varphi(t\cdot \alpha)=\beta$.

Now let us take $\alpha$ and $\beta$ in $\OO(M,\varepsilon)$. There is such $t\in \TT$ that $ y_i(t\cdot\alpha)=y_i(\beta)$ for $i\notin M$, $z_j(t\cdot\alpha)=z_j(\beta)$ for all $j$, and $s_k(t\cdot \alpha)=s_k(\beta)$ for all $k$. Of couse $y_i(t\cdot\alpha)=y_i(\beta)=0$ for $i\in M$. Now we can apply $\psi=\exp(uD_1)$ to obtain $x(\psi(t\cdot\alpha))=x(\beta)$. Since $y_1^{a_1}\ldots y_m^{a_m}(t\cdot\alpha)=0$, the automorphism $\psi$ does not change $z_1$. So, we obtain $\psi(t\cdot\alpha)=\beta$. 

Sets $\OO_1(M,P,Q)$ and $\OO_2(M,P,Q)$ are just $\TT$-orbits.

Let us prove that two points in different sets $\OO$, $\OO(M,\varepsilon)$, $\OO_1(M,P,Q)$  and $\OO_2(M,P,Q)$ can not be moved one to each other by $\Aut_{alg}(X)$. First of all let us note that sets $\OO$ and $\OO(M,\varepsilon)$ consist of regular points, while sets $\OO_1(M,P,Q)$ and $\OO_2(M,P,Q)$ consist of singular points.  Each function $y_i$ is $\SAut(X)$-invariant and semi-invariant under every algebraic action of a one-dimensional torus, see Lemma~\ref{leml}. Hence, each $y_i$ is $\Aut_{alg}(X)$-semi-invariant. Therefore, action of an element from $\Aut_{alg}(X)$ can not change the set $M$. These two assertion imply that $\OO$ and $\OO(M)=\bigcup\OO(M,\varepsilon)$ are $\Aut_{alg}(X)$-invariant. If $d=1$, the set $\OO(M)$ is a unique $\Aut_{alg}$-orbit. If $d>1$, the variety $\OO(M)$ is reducible and $\OO(M,\varepsilon)$ are its irreducible components. Since $\Aut_{alg}$ is generated by connected subgroups, it can not nontrivially permute irreducible components of $\OO(M)$.

If $p+q>2$, the set of singular points $X^{sing}$ is reducible. Irreducible components of $X^{sing}$ has the form $V_{i,j,k}=\{y_i=z_j=s_k=0\}$. So, the triple of sets $M,P,Q$ depends only on the information to which components of $X^{sing}$ belongs our point. By Proposition~\ref{linn}, $\Aut_{alg}(X)$ do not change the set of components containing a point. This implies that the union $\OO_1(M,P,Q)\cup\OO_2(M,P,Q)$ is  $\Aut_{alg}(X)$-invariant. In other words  $\OO_1(M,P,Q)$ and $\OO_2(M,P,Q)$ are linked, but they are not linked to any other $\TT$-orbits.

Now let us prove that we can not glue  $\OO_1(M,P,Q)$ and $\OO_2(M,P,Q)$ by $\Aut_{alg}(X)$. Fix $\alpha\in \OO_2(M,P,Q)$. Suppose $\partial$ is an LND of $\KK[X]$ such that $\partial(x)(\alpha)\neq 0$. Remind that $y_1,\ldots ,y_m\in ML(X)$. Therefore, 
$$\partial(xy_1^{a_1}\ldots y_m^{a_m})=\partial(x)y_1^{a_1}\ldots y_m^{a_m}.$$
By Remark~\ref{uuu}, the decomposition of $\partial$ onto $\eta_i$-homogeneous components has the form $\partial=\sum_{j=l}^k\partial_j$, where $l\geq a_i$. Therefore, $\deg \partial(z_u)\geq a_j$. That is $y_i^{a_i}\mid \partial(z_u)$. Similarly, 
$
(y_1^{a_1}\ldots y_m^{a_m})\mid \partial(z_u)
$
and 
$
(y_1^{a_1}\ldots y_m^{a_m})\mid \partial(s_v).
$
 Denote $\partial(z_u)=f_u\cdot (y_1^{a_1}\ldots y_m^{a_m})$, $\partial(s_v)=g_v\cdot (y_1^{a_1}\ldots y_m^{a_m})$.
We obtain

\begin{multline*}
0=\partial(xy_1^{a_1}\ldots y_m^{a_m})+\partial(z_1^{b_1}\ldots z_p^{b^p})+\partial(s_1^{c_1}\ldots s_q^{c^q})=\\
=y_1^{a_1}\ldots y_m^{a_m}\left(\partial(x)+\sum_{u=1}^p b_uz_1^{b_1}\ldots z_u^{b_u-1}\ldots z_p^{b_p}f_u+\sum_{v=1}^q c_vs_1^{c_1}\ldots s_v^{c_v-1}\ldots s_q^{c_q}g_v\right)
\end{multline*}
Therefore, 
$$
\partial(x)=-\sum_{u=1}^p b_uz_1^{b_1}\ldots z_u^{b_u-1}\ldots z_p^{b_p}f_u-\sum_{v=1}^q c_vs_1^{c_1}\ldots s_v^{c_v-1}\ldots s_q^{c_q}g_v.
$$
So, $\partial(x)(\alpha)=0$. Hence, $\OO_2(M,P,Q)$ is $\SAut(X)$-invariant.

Let $\delta$ be a semisimple derivation of $X$. Let us consider $\ZZ^m$-grading $\eta=\eta_1\oplus\ldots\oplus \eta_m$. Consider the decomposition of $\delta$ onto $\eta$-homogeneous components $\delta=\sum\limits_{j_1,\ldots,j_m}\delta_{j_1,\ldots,j_m}$.  By Lemma~\ref{leml}(c) $\delta(y_i)=\lambda_i y_i$. Then $\delta_{0,\ldots,0}(y_i)=\lambda_i y_i$ and $\delta_{j_1,\ldots,j_m}(y_i)=0$ if at least one of $j_i$ is nonzero. Let us prove for each $j_1,\ldots,j_m$ that $\delta_{j_1,\ldots,j_m}(x)(\alpha)=0$. If al least one $j_i<a_i$, then $x\mid\delta_{j_1,\ldots,j_m}(x)$. That is $\delta_{j_1,\ldots,j_m}(x)(\alpha)=0$. If If $j_i\geq a_i$ for all $i$, then as above 
$
(y_1^{a_1}\ldots y_m^{a_m})\mid\delta_{j_1,\ldots,j_m}(z_u)
$
and 
$
(y_1^{a_1}\ldots y_m^{a_m})\mid \delta_{j_1,\ldots,j_m}(s_v).
$
Therefore, 
$$
\delta_{j_1,\ldots,j_m}(x)=-\sum_{u=1}^p b_uz_1^{b_1}\ldots z_u^{b_u-1}\ldots z_p^{b_p}\widetilde{f}_u-\sum_{v=1}^q c_vs_1^{c_1}\ldots s_v^{c_v-1}\ldots s_q^{c_q}\widetilde{g}_v.
$$
 That is $\delta_{j_1,\ldots,j_m}(x)(\alpha)=0$. Since it holds for every homogeneous component, we have $\delta(x)(\alpha)=0$. Hence, we can not move $\alpha$ to $\OO_1(M,P,Q)$ by one-dimensional torus. Therefore we can not glue  $\OO_1(M,P,Q)$ and $\OO_2(M,P,Q)$ by $\Aut_{alg}(X)$.
\end{proof}
\begin{re}
If $X$ is a trinomial hupersurface with a free term then $X$ is smooth. So, we have only $\Aut_{alg}$-orbits $\OO$ and $\OO(M,\varepsilon)$.
\end{re}
\begin{re}
Note, that all $\Aut_{alg}(X)$-orbits turned to be orbits of the group generated by $\Ga$-subgroups $\{\exp(uD_i)|u\in\KK\}$, $\{\exp(vE_j)|v\in\KK\}$ and $\TT$.
\end{re}
\begin{re}
The number of $\Aut_{alg}(X)$-orbits is finite. It equals 
$$1+(2^{m}-1)d+2(2^m-1)(2^p-1)(2^{q}-1).$$
\end{re}
\begin{re}
It is easy to see that $\OO$ is an open orbit, i.e. $\dim \OO=\dim X=m+p+q$. Dimension of $\OO(M,\varepsilon)$ equals $\dim X-|M|=m+p+q-|M|$. Dimension of $\OO_1(M,P,Q)$ equals $\dim X+1-|M|-|P|-|Q|=m+p+q+1-|M|-|P|-|Q|$. And $\dim \OO_2(M,P,Q)$ equals $\dim X-|M|-|P|-|Q|=m+p+q-|M|-|P|-|Q|$.
\end{re}

Also we can describe $\SAut(X)$-orbits.

\begin{theor}
$\SAut(X)$-orbits on $X$ have the following form. 
\begin{itemize}
\item $\SSS(d_1,\ldots, d_m)=\{\alpha\in X \mid y_1(\alpha)=d_1,\ldots, y_m(\alpha)=d_m\}$, all $d_i\neq 0$. 

Each $\SSS(d_1,\ldots, d_m)$ is isomorphic to $\KK^{p+q}$.
\item $\LLL(d_1,\ldots, d_m, r_1,\ldots, r_p, t_1,\ldots, t_q)=$

$= \{\alpha\in X \mid y_i(\alpha)=d_i, z_j(\alpha)=r_j, s_k(\alpha)=t_k\text{ for all } i,j,k\},$ 

there exists $d_l=0$, all $r_j$ and all $t_k$ are nonzero, and $r_1^{b_1}\ldots r_p^{b_p}=-t_1^{c_1}\ldots t_q^{c_q}$. 

Each $\LLL(d_1,\ldots, d_m, r_1,\ldots, r_p, t_1,\ldots, t_q)$ is isomorphic to a line. 
\item Each singular point is $\SAut(X)$-stable.
\end{itemize}
\end{theor}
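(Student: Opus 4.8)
The plan is to compute the $\SAut(X)$-orbits directly, refining the $\Aut_{alg}(X)$-orbits of Theorem~\ref{AA} by removing the torus $\TT$. Two facts drive everything. First, by Proposition~\ref{p1} each $y_i$ lies in $ML(X)=\KK[y_1,\ldots,y_m]$, hence is $\SAut(X)$-invariant, so every $\SAut(X)$-orbit is contained in a single level set $\{y_1=d_1,\ldots,y_m=d_m\}$. Second, I would reuse the divisibility established inside the proof of Theorem~\ref{AA} via Remark~\ref{uuu}: for every LND $\partial$ of $\KK[X]$ the function $y^a:=y_1^{a_1}\ldots y_m^{a_m}$ divides both $\partial(z_u)$ and $\partial(s_v)$, and consequently, applying $\partial$ to the defining equation and cancelling $y^a$, one gets $\partial(x)=-\sum_u b_u z_1^{b_1}\ldots z_u^{b_u-1}\ldots z_p^{b_p}f_u-\sum_v c_v s_1^{c_1}\ldots s_v^{c_v-1}\ldots s_q^{c_q}g_v$, where $\partial(z_u)=y^a f_u$ and $\partial(s_v)=y^a g_v$.

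For the first family I fix $d=(d_1,\ldots,d_m)$ with all $d_i\neq 0$. On $\SSS(d)$ the scalar $c:=\prod d_i^{a_i}$ is nonzero, so the equation solves uniquely for $x=-c^{-1}(z_1^{b_1}\ldots z_p^{b_p}+s_1^{c_1}\ldots s_q^{c_q})$; thus projection to $(z_1,\ldots,z_p,s_1,\ldots,s_q)$ gives an isomorphism $\SSS(d)\cong\KK^{p+q}$. As the $y_i$ are $\SAut(X)$-invariant, $\SSS(d)$ is $\SAut(X)$-stable, and I prove transitivity explicitly: along $\{y_i=d_i\}$ one has $D_i(z_i)=-y^a=-c$, so $\exp(uD_i)$ translates $z_i$ by $-uc$ while fixing the other $z$'s and all the $s$'s, and likewise $\exp(vE_j)$ translates $s_j$; composing these I reach any prescribed tuple $(z,s)$, with $x$ determined. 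Hence $\SSS(d)$ is one orbit, isomorphic to $\KK^{p+q}$.

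Next I treat level sets with some $d_l=0$. There $y^a$ vanishes identically, so the divisibility fact gives $\partial(z_u)(\alpha)=\partial(s_v)(\alpha)=0$ for every LND $\partial$ and every such point $\alpha$; together with the invariance of the $y_i$ this shows $\SAut(X)$ can move $\alpha$ only inside the $x$-line of its fibre. If moreover all $z_j,s_k$ are nonzero, then $\alpha$ is regular (some $F_{z_j}\neq 0$), the whole line $\{y=d,\ z=r,\ s=t\}$ lies on $X$ since the equation no longer involves $x$, and $\exp(uD_1)$ sweeps out this line with the nonzero rate $b_1 r_1^{b_1-1}r_2^{b_2}\ldots r_p^{b_p}$; so $\LLL$ is exactly one orbit, a line, the relation $r^b=-t^c$ being just the equation on this locus. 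The remaining points of such fibres have some $z_j$ or $s_k$ equal to zero; since the equation there reads $z^b+s^c=0$, a vanishing $z_j$ forces $s^c=0$ and conversely, so these are precisely the points of a component $\{y_i=z_j=s_k=0\}$, i.e.\ the singular points. For such $\alpha$ I evaluate the displayed formula for $\partial(x)$: because $x$ is the unique power-one variable we have all $b_j,c_k\geq 2$, so a vanishing $z_{j_0}$ (resp.\ $s_{k_0}$) kills every summand $z_1^{b_1}\ldots z_u^{b_u-1}\ldots z_p^{b_p}$ (resp.\ its $s$-analogue) at $\alpha$, giving $\partial(x)(\alpha)=0$; thus every LND annihilates all coordinates at $\alpha$ and $\alpha$ is $\SAut(X)$-fixed.

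These three families are disjoint and cover $X$, which completes the argument. The transitivity half is easy, being realised by explicit one-parameter subgroups; the real content is the ``no larger orbit'' half, which rests entirely on the divisibility $y^a\mid\partial(z_u),\partial(s_v)$ and the ensuing formula for $\partial(x)$. I therefore expect the main obstacle to be confirming these for an arbitrary, not necessarily homogeneous, LND --- exactly the point where the decomposition into $\eta_i$-homogeneous components together with Remark~\ref{uuu} is needed, and where the hypothesis that $x$ is the only power-one variable (forcing $b_j,c_k\geq 2$) is used to annihilate $\partial(x)$ at the singular points.
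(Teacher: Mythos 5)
Your proof is correct, and for the orbits $\SSS(d_1,\ldots,d_m)$ it coincides with the paper's argument (invariance of the $y_i$ via Proposition~\ref{p1}, transitivity by composing $\exp(u D_i)$ and $\exp(v E_j)$), but in the two ``no larger orbit'' steps you take a genuinely different route. The paper handles the lines $\LLL$ structurally: since the $y_i$ are $\SAut(X)$-invariant, every special automorphism restricts to the fibre $X\cap\{y_1=d_1,\ldots,y_m=d_m\}\cong\KK\times Z$, where $Z=\VV(z_1^{b_1}\ldots z_p^{b_p}+s_1^{c_1}\ldots s_q^{c_q})$ is a rigid toric variety (rigidity deduced from Proposition~\ref{rt}), and then cites \cite[Theorem~2.24]{Fr} to get semi-rigidity of $Z\times\KK$, so that orbits inside the fibre are lines or points; for singular points the paper argues that by Theorem~\ref{AA} the singular $\Aut_{alg}(X)$-orbits are $\TT$-orbits, i.e.\ tori, and a torus admits no nontrivial $\Ga$-action. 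You instead push the single divisibility fact $y_1^{a_1}\ldots y_m^{a_m}\mid\partial(z_u),\partial(s_v)$ (Remark~\ref{uuu} together with Lemma~\ref{fl}, exactly as in the proof of Theorem~\ref{AA}) through both cases: on a fibre with some $d_l=0$ it forces $\partial(z_u),\partial(s_v)$ to vanish identically on the fibre, trapping every trajectory in an $x$-line, and at a singular point the displayed formula for $\partial(x)$ vanishes because all $b_j,c_k\geq 2$. Your route is more self-contained --- it needs neither the external semi-rigidity theorem nor the classification of singular $\Aut_{alg}(X)$-orbits, and it yields the slightly stronger fact that every LND vanishes, as a vector field, at each singular point --- at the cost of redoing by hand what the paper gets from rigidity theory; the paper's route is shorter and places the result in a structural framework. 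Two small steps you should make explicit: (i) passing from ``$\partial(z_u)$ vanishes at every point of the fibre'' to ``orbits stay in the $x$-line'' uses that $y_1^{a_1}\ldots y_m^{a_m}\in\Ker\partial$, whence $\partial^n(z_u)=y_1^{a_1}\ldots y_m^{a_m}\,\partial^{n-1}(f_u)$ also vanishes on the fibre for all $n\geq 1$, so the exponential series for $z_u$ is constant there; (ii) ``the vector field vanishes at $\alpha$, hence $\alpha$ is $\SAut(X)$-fixed'' follows from $\partial^n(f)(\alpha)=\partial\bigl(\partial^{n-1}(f)\bigr)(\alpha)=0$ for all $n\geq 1$. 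Both are one-line remarks and do not affect correctness.
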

\begin{proof}
Consider $\alpha, \beta\in\SSS(d_1,\ldots, d_m)$. Since, $y_i\in ML(X)$, the orbit $\SAut(X)\alpha$ is contained in $\SSS(d_1,\ldots, d_m)$. But we can apply a composition $\varphi$ of $\exp(u_i E_j)$ and $\exp (v_k D_k)$ in such a way to obtain $z_j(\varphi(\alpha))=z_j(\beta)$ and $s_k(\varphi(\alpha))=s_k(\beta)$ for all $j$ and $k$. Then $\alpha=\beta$. I.e. $\SSS(d_1,\ldots, d_m)$ is a unique $\SAut(X)$-orbit.

Now let us consider $\alpha, \beta\in\LLL(d_1,\ldots, d_m, r_1,\ldots, r_p, t_1,\ldots, t_q)$. We can move $\alpha$ to $\beta$ by $\exp(u D_1)$. There exists $d_l=0$. Since $y_i\in ML(X)$, each $\psi\in \SAut(X)$ induces an automorphism of $X\cap \{y_1=d_1,\ldots, y_m=d_m\}\cong \KK\times Z$, where $Z$ is the rigid toric (may be not irreducible, may be not normal) variety
$$
\VV(z_1^{b_1}\ldots z_p^{b_p}+t_1^{c_1}\ldots t_q^{c_q}).
$$ 
The fact that $Z$ is rigid follows from rigidity of the variety 
$$
\VV(u^3+z_1^{b_1}\ldots z_p^{b_p}+t_1^{c_1}\ldots t_q^{c_q}).
$$
The last one is rigid by Proposition~\ref{rt}.
Since $Z$ is rigid, $Z\times \KK$ is semi-rigid, see~\cite[Theorem~2.24]{Fr}. Therefore, $\SAut(X)$-orbits on $Y$ are lines.

Each $\Aut_{alg}(X)$-orbit consisting of singular points is a $\TT$-orbit. Hence, it is isomorphic to a torus. Therefore, it does not admit a nontrivial action of the additive group $(\KK,+)$. So, $\SAut(X)$-orbits of singular points are points.
\end{proof}

\begin{re}
Typical $\SAut(X)$-orbits can be separated by functions from $ML(X)$.
\end{re}

Now we are able to describe $\Aut(X)$-orbits on $X$. They can be obtained by gluing of $\Aut_{alg}(X)$-orbits. If $d=\gcd(b_1,\ldots, b_p,c_1,\ldots, c_q)$, then $\HH=\TT\times\GG$, where $\GG$ is a cyclic group $\CCC_d$, that permutes $\OO(M,\varepsilon)$ for different $\varepsilon$.   The group $\Aut_{alg}(X)$ is a normal subgroup of $\Aut(X)$. Therefore, each automorphism induces a permutation of $\Aut_{alg}(X)$-orbits.

\begin{prop}\label{mmm} 
Two $\Aut_{alg}(X)$-orbits $O'$ and $O''$ are contained in one $\Aut(X)$-orbit if and only if there is element of $\SS(X)\rightthreetimes \GG$ which maps $O'$ to $O''$.
\end{prop}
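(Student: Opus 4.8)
The plan is to prove the nontrivial implication; the converse is immediate, since $\SS(X)\rightthreetimes\GG$ is a subgroup of $\Aut(X)$ — elements of $\SS(X)$ permute the variables preserving the defining trinomial, and $\GG\subseteq\HH\subseteq\Aut(X)$ — so any $\sigma\in\SS(X)\rightthreetimes\GG$ with $\sigma(O')=O''$ certainly places $O'$ and $O''$ in one $\Aut(X)$-orbit. For the forward direction, fix $\varphi\in\Aut(X)$ with $\varphi(O')=O''$. Because $\Aut_{alg}(X)$ is normal in $\Aut(X)$, the automorphism $\varphi$ permutes the $\Aut_{alg}(X)$-orbits listed in Theorem~\ref{AA}. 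The whole strategy is to build a \emph{single} $\sigma\in\SS(X)\rightthreetimes\GG$ that induces the same permutation of these orbits as $\varphi$; then $\sigma(O')=\varphi(O')=O''$ and we are done. I would read this permutation off from three pieces of discrete data attached to $\varphi$.

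First, the $y$-data. By Corollary~\ref{coco}, applied with $x$ the maximal element as in Proposition~\ref{p1}, one has $\varphi(y_i)=\lambda_i y_{\pi(i)}$ for a permutation $\pi$ of $\{1,\dots,m\}$ and scalars $\lambda_i\in\KK^\times$, so $\varphi$ sends every label $M$ to $\pi(M)$. I would then check that $\pi$ preserves the exponents, $a_i=a_{\pi(i)}$, by comparing orders of vanishing of $xy_1^{a_1}\cdots y_m^{a_m}=-(z_1^{b_1}\cdots z_p^{b_p}+s_1^{c_1}\cdots s_q^{c_q})$ along the divisors $\{y_i=0\}$, which $\varphi$ permutes according to $\pi$; this identifies $\pi$ with the restriction to the $y$-block of an element of $\SS(X)$. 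Second, the singular-component data. When $q\geq 1$ the irreducible components of $X^{\mathrm{sing}}$ are the $V_{i,j,k}=\{y_i=z_j=s_k=0\}$, which $\varphi$ permutes by Proposition~\ref{linn}. Since the first index already transforms by $\pi$, the intersection pattern of the grid $\{V_{i,j,k}\}$ forces $\varphi$ to permute the $z$-indices by some $\rho$ and the $s$-indices by some $\tau$ (with the option of interchanging the two monomials when $(b_1,\dots,b_p)$ and $(c_1,\dots,c_q)$ coincide as multisets), each preserving the relevant exponents. Together with $\pi$ this yields $\sigma_0\in\SS(X)$ realizing $\varphi$'s action on all triples $(M,P,Q)$ and on the $M$-labels; the types $\OO_1$ versus $\OO_2$ are matched automatically because $\dim\OO_1=\dim\OO_2+1$.

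Third, the $\varepsilon$-data. After composing with $\sigma_0^{-1}$, the residual automorphism fixes every $M$, hence maps each $\OO(M,\varepsilon)$ into $\OO(M,\cdot)$ and acts only on the root of unity $\varepsilon$. Since $\GG\cong\CCC_d$ acts regularly on the $d$-th roots of unity indexing the components of $\OO(M)$ (the action recorded just before the statement), there is $g\in\GG$ matching this residual action simultaneously for all $M$. Setting $\sigma=\sigma_0\,g\in\SS(X)\rightthreetimes\GG$ produces an element inducing exactly $\varphi$'s permutation of $\Aut_{alg}(X)$-orbits, which completes the argument. (When $q=0$ the variety is smooth and the second piece is vacuous; when $d=1$ the third piece is trivial.)

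The main obstacle is the exponent-preservation claims in the first two pieces: one must show that the index permutations forced on the $y$'s, $z$'s and $s$'s by an \emph{arbitrary} automorphism genuinely respect the exponents $a_i,b_j,c_k$, so that they arise from $\SS(X)$ rather than from some exotic non-monomial automorphism. This is precisely where the divisorial (valuative) invariants and the incidence geometry of $X^{\mathrm{sing}}$ must be used, rather than merely the coarse fact that $\varphi$ permutes orbits; the admissibility of a $z\leftrightarrow s$ interchange and the verification that the $\varepsilon$-action is exactly the regular $\GG$-action are the remaining points to pin down.
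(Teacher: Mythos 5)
Your outline follows the same global skeleton as the paper's proof (the trivial converse; Corollary~\ref{coco} giving $\varphi(y_i)=\lambda_i y_{\pi(i)}$; correcting $\varphi$ by a permutation and a torus element; separating $\OO_1$ from $\OO_2$ by dimension; using transitivity of $\GG$ on the $\varepsilon$'s), but the two steps you yourself flag as ``the main obstacle'' are exactly where the content of the proposition lies, and the arguments you sketch for them do not work. For the $y$-exponents: comparing orders of vanishing of $F=xy_1^{a_1}\cdots y_m^{a_m}=-(z_1^{b_1}\cdots z_p^{b_p}+s_1^{c_1}\cdots s_q^{c_q})$ along the divisors $\{y_i=0\}$ yields nothing, because $F$ is not preserved by $\varphi$ in any controlled way. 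Writing $v_j$ for the order of vanishing along $\{y_j=0\}$, equivariance of valuations gives $v_{\pi(i)}(\varphi(F))=v_i(F)=a_i$, while expanding $\varphi(F)=\varphi(x)\prod_j\lambda_j^{a_j}y_{\pi(j)}^{a_j}$ gives $v_{\pi(i)}(\varphi(F))=v_{\pi(i)}(\varphi(x))+a_i$; the two together only say $v_{\pi(i)}(\varphi(x))=0$ and impose no relation between $a_i$ and $a_{\pi(i)}$. The paper's substitute is Lemma~\ref{ll}: for every LND $\partial$, the image $\partial(U)$ of the set $U$ of nonmaximal elements lies in the principal ideal $(y_1^{a_1}\cdots y_m^{a_m})$, while $D_1(-z_1)=y_1^{a_1}\cdots y_m^{a_m}$ with $z_1\in U$; hence that ideal is generated by LND-images of nonmaximal elements, a description invariant under conjugation by $\varphi$, and its $\varphi$-stability is what forces $a_i=a_{\pi(i)}$. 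This ideal-theoretic characterization is the missing idea, and no purely valuative count of the naive kind replaces it.

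The second gap is analogous but for $b_j,c_k$: the incidence geometry of the grid $V_{i,j,k}=\{y_i=z_j=s_k=0\}$ can at best force the induced permutation of components to be blockwise (a permutation of $z$-indices, one of $s$-indices, possibly a swap of the two blocks), since the incidence complex is insensitive to the exponents; it cannot show $b_j=b_{\rho(j)}$, which is what you need for $\rho$ to come from $\SS(X)$. The paper obtains this from a genuinely different ingredient: the normalized automorphism $\psi$ (with $\psi(y_i)=y_i$) induces an automorphism of $Z(M)\cong\KK\times Y$, where $Y=\VV(z_1^{b_1}\cdots z_p^{b_p}+s_1^{c_1}\cdots s_q^{c_q})$ is a \emph{rigid toric} variety; rigidity gives a unique maximal torus, so the induced automorphism $\widehat{\psi}$ of $\KK[Y]=ML(\KK\times Y)$ acts on the weight monoid, the weights of the $z_j$ and $s_k$ are intrinsically distinguished (extremal rays of the weight cone, or the shortest non-generated vector when $q=1$), and the unique relation $b_1w(z_1)+\cdots+b_pw(z_p)=c_1w(s_1)+\cdots+c_qw(s_q)$ recovers the exponents, forcing $\widehat{\psi}$ to be monomial and exponent-preserving. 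Without these two ingredients your proposal reduces the proposition to precisely the claims that require proof. A minor further point: you do not need (and should not try to produce) a single $\sigma\in\SS(X)\rightthreetimes\GG$ inducing $\varphi$'s permutation of \emph{all} orbits simultaneously --- in particular a single $g\in\GG$ matching the $\varepsilon$-shift for every $M$ at once is more than the statement asserts and is not established by your argument; it suffices, as in the paper, to realize the image of the one given orbit $O'$.
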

\begin{proof}
Since $\SS(X)\rightthreetimes \GG$ consists of automorphisms, if $O'$ can be moved to $O''$ by an element of $\SS(X)\rightthreetimes \GG$, then $O'$ and $O''$ are in one $\Aut(X)$-orbit.

Conversely, assume there exists $\varphi\in \Aut(X)$ such that $\varphi(O')=O''$.  Functions $y_1,\ldots, y_m$ are indecomposable in $ML(X)$ and  semi-invariant under every $\KK^\times$-action. By Corollary~\ref{coco}, $\varphi(y_i)=\lambda y_j$. Let us prove that it is possible only if $a_i=a_j$.
\begin{lem}\label{ll}
Denote by $U\subseteq \KK[X]$ the set of nonmaximal elements. Let $\partial$ be an LND of $\KK[X]$. Then $\partial(U)\subseteq (y_1^{a_1}y_2^{a_2}\ldots y_m^{a_m})$. 
\end{lem}
\begin{proof}
If we consider $\eta_i$-homogeneous components of $\partial=\sum_{i=l}^k\partial_i$, then $l\geq a_i$ by Remark~\ref{uuu}. Then $\deg\partial(z_j)\geq a_i$ and  $\deg\partial(s_k)\geq a_i$. Hence, $\partial(z_j)$ and $\partial(s_k)$ is divisible by $y_i^{a_i}$. Since it is true for all $i$, we have $\partial(z_j),\partial(s_k)\in (y_1^{a_1}\ldots y_m^{a_m})$. So, we have that $\delta(\KK[y_1,\ldots,y_m, z_1,\ldots, z_p, s_1,\ldots, s_q])\subseteq (y_1^{a_1}\ldots y_m^{a_m})$. Now note that $x$ is a maximal element of $\KK[X]$. Each element of $\KK[X]\setminus\KK[y_1,\ldots,y_m, z_1,\ldots, z_p, s_1,\ldots, s_q])$ is divisible by $x$. Hence, it is maximal.
\end{proof}
Note that $y_1^{a_1}\ldots y_m^{a_m}= D_1(-z_1)\in D_1(U)$. And $z_1$ is not a maximal element. Hence if $\sigma\in \mathrm{S}_m$, and there is $i$ such that $a_{i}\neq a_{\sigma(i)}$, then 
$$\Im D_1(U)\not\subseteq (y_1^{a_{\sigma(1)}}\ldots  y_m^{a_{\sigma(m)}}).$$ 
Therefore, $\varphi(y_i)$ can not be equal to $\lambda y_i$ if $a_i\neq a_j$. But we can permute $y_i$ with equal powers~$a_i$ by $\SS(X)$ and multiply them by $\lambda$ by $\TT$-action. So, we can take such a composition $\psi=\rho\circ t\circ\varphi$, where $\rho$ is an element of $\SS(X)$ and $t$ is an element of $\TT$,  that $\psi(y_i)=y_i$ for all $i$. Denote $O'''=\psi(O')$. If we prove that there is $\zeta\in\SS(X)\rightthreetimes \GG$ such that $\zeta(O')=O'''$. Then $\rho^{-1}\circ \zeta(O')=O''$. If $O'=\OO$, then $\dim O'''=\dim O'=\dim X$, Hence, $O'''=O'$. If $O'=\OO(M,\varepsilon)$, then it consists of regular points. Therefore, $O'''$ consists of regular points. Hence, $O'''=\OO(\widetilde{M},\widetilde{\varepsilon})$. Since $\psi(y_i)=y_i$, we have $\widetilde{M}=M$ and we can map $O'$ to $O'''$ by $\GG$. Let $O'\subseteq W(M,P,Q)=\OO_1(M,P,Q)\cup \OO_2(M,P,Q)$. Then $\psi$ induces automorphism of 
$$Z(M)=X\cap \{y_i=0|i\in M, y_j=1, j\notin M\}\cong \KK\times Y,$$
where
$
Y=\VV(z_1^{b_1}\ldots z_p^{b_p}+t_1^{c_1}\ldots t_q^{c_q})
$ 
is a rigid toric variety. This automorphism can be restricted to an automorphism $\widehat{\psi}$ of $ML(\KK\times Y)=\KK[Y]$. The permutation of irreducible components $V_{i,j,k}=\{y_i=z_j=s_k=0\}$ of $X^{sing}$ given by $\psi$ corresponds to the permutation of irreducible components $V_{j,k}=\{z_j=s_k=0\}$ of $Y$ by $\widehat{\psi}$. But $Y$ is a rigid variety, hence, it has a unique torus in its automorphism group. Toric variety $Y$ corresponds to its weight monoid. Automorphism $\widehat{\psi}$ corresponds to an automorphism of this monoid. We can consider the cone generated by this monoid. Then if $p>1$ and $q>1$, weights of $z_j$ and $s_k$ belong to the extremal rays of the cone. I.e. these weights a the shortest vectors of weight monoid on these rays. We have the unique relation between the weights: $b_1w(z_1)+\ldots+b_pw(z_p)=c_1w(s_1)+\ldots+c_qw(s_q)$. So, we can obtain numbers $b_i$ and $c_i$ from the weight monoid. Therefore, $\widehat{\psi}$ maps each variable to $\lambda$ multiplied by a variable with the same power in the equation. That is we can not permute irreducible components of $Y^{sing}$ in a way different to permutation given by an element of $\SS(X)$. If $p>1$ and $q=1$, then only weights of $z_j$ correspond to extremal rays of the cone. And the weight of $s_1$ is a the shortest vector in weight monoid which is not in monoid generated by all $w(z_i)$. That is weight of $s_1$ and weights of $z_{ij}$ are defined in terms of weight monoid. And we can obtain numbers $b_1, \ldots, b_p, c_1$ from the relatins between primitive vectors on extremal rays and the shortest vector that is not generated by these vectors. Hence, $\widehat{\psi}$ maps each $z_j$ to $z_k$ with the same power in the equation. If $p=q=1$, then we have unique irreducible component of $Y^{sing}$.

Thus, we have proved that permutation of $V_{j,k}$ given by $\widehat{\psi}$ can be given by an element of $\tau\in\SS(X)$. Since $\psi$ preserves every $y_i$, we obtain that permutation of $V_{i,j,k}$ given by $\psi$ can be given by an element of $\tau$. Therefore, $\tau$ maps $W(M,P,Q)$ to $\psi(W(M,P,Q))$. But $\Aut_{alg}$-orbits $\OO_1(M,P,Q)$ and $\OO_2(M,P,Q)$ has different dimensions. That is $\tau(\OO_1(M,P,Q))=\psi(\OO_1(M,P,Q))$ and $\tau(\OO_2(M,P,Q))=\psi(\OO_2(M,P,Q))$.
\end{proof}

Since $\GG$ glues only $\OO(M,\varepsilon)$ for different $\varepsilon$ into $\OO(M)$ and does not change  $\OO$, $\OO_1(M,P,Q)$, and $\OO_2(M,P,Q)$, we have the following theorem.

\begin{theor} \label{maintheor}
Orbits of automorphism group on $X$ are the sets obtained by gluing by $\SS(X)$ the following sets: $\OO$, $\OO(M)$, $\OO_1(M,P,Q)$, and $\OO_2(M,P,Q)$.
\end{theor}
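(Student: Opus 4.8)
The plan is to read off the statement from Proposition~\ref{mmm}, the list of $\Aut_{alg}(X)$-orbits in Theorem~\ref{AA}, and the explicit action of $\GG$. Let $\mathcal{F}$ be the finite set of $\Aut_{alg}(X)$-orbits from Theorem~\ref{AA}. By Proposition~\ref{mmm}, two members of $\mathcal{F}$ lie in one $\Aut(X)$-orbit precisely when one is carried to the other by an element of $\SS(X)\rightthreetimes\GG$; equivalently, the $\Aut(X)$-orbits are exactly the orbits of $\SS(X)\rightthreetimes\GG$ acting on $\mathcal{F}$. Thus the entire task reduces to analysing this finite action.

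Since $\GG$ is the normal factor of $\SS(X)\rightthreetimes\GG$, I would use the standard fact that for a semidirect product $S\rightthreetimes N$ acting on a set, the $S$-action passes to the set of $N$-orbits and the orbits of the whole group are the $S$-orbits on that quotient. With $S=\SS(X)$ and $N=\GG$, it then suffices to compute the $\GG$-orbits on $\mathcal{F}$ and afterwards glue them by $\SS(X)$.

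For the $\GG$-action I would use that $\GG\cong\CCC_d$, $d=\gcd(b_1,\ldots,b_p,c_1,\ldots,c_q)$, acts by rescaling coordinates, hence preserves the vanishing locus of each variable and cannot change whether $x=0$. Consequently $\GG$ fixes $\OO$ and fixes each $\TT$-orbit $\OO_1(M,P,Q)$ and $\OO_2(M,P,Q)$ setwise, the latter two being distinguished only by the value of $x$. On the regular pieces $\OO(M,\varepsilon)$ a generator of $\GG$ multiplies $z_1^{b_1/d}\cdots z_p^{b_p/d}$ and $s_1^{c_1/d}\cdots s_q^{c_q/d}$ by scalars whose quotient is a $d$-th root of unity, so it permutes $\{\OO(M,\varepsilon)\mid\varepsilon^d=1\}$, for fixed $M$, transitively and glues this family into $\OO(M)$, as recorded just before the theorem. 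Hence the $\GG$-orbits on $\mathcal{F}$ are $\OO$, the sets $\OO(M)$, and the individual orbits $\OO_1(M,P,Q)$ and $\OO_2(M,P,Q)$.

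It remains to glue these $\GG$-orbits by $\SS(X)$, which permutes variables and therefore merges sets of the same shape with permuted index sets $M,P,Q$; this yields exactly the description in the statement. The substantive content has already been carried out in Theorem~\ref{AA} and Proposition~\ref{mmm}; the only genuine step left is the reduction for the normal factor $\GG$ together with the explicit check that $\GG$ acts solely by cyclically permuting the components $\OO(M,\varepsilon)$, and this is the point I would verify most carefully.
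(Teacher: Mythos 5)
Your proposal is correct and follows essentially the same route as the paper: the paper's proof of Theorem~\ref{maintheor} is precisely the reduction via Proposition~\ref{mmm} to the action of $\SS(X)\rightthreetimes\GG$ on the finite list of $\Aut_{alg}(X)$-orbits from Theorem~\ref{AA}, combined with the observation (stated just before Proposition~\ref{mmm}) that $\GG\cong\CCC_d$ only permutes the components $\OO(M,\varepsilon)$ transitively while fixing $\OO$, $\OO_1(M,P,Q)$ and $\OO_2(M,P,Q)$ setwise. Your extra bookkeeping (the semidirect-product orbit reduction and the explicit check that a generator of $\GG$ rescales $z_1^{b_1/d}\cdots z_p^{b_p/d}$ relative to $s_1^{c_1/d}\cdots s_q^{c_q/d}$ by a primitive $d$-th root of unity) is exactly the content the paper leaves implicit, so there is no gap and no genuinely different method.
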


\begin{ex}
Let $X=\VV(xy_1^2y_2^2+z^3+s^3).$ We have the following list of $\Aut(X)$-orbits.
\begin{itemize}
\item $\OO=\{y_1,y_2\neq 0\}$;
\item $\OO(\{1\})\cup\OO(\{2\})=\{y_1=0;y_2,z,s\neq 0\}\cup\{y_2=0;y_1,z,s\neq 0\}$;
\item $\OO(\{1,2\})=\{y_1=y_2=0; z,s\neq 0\}$;
\item $\OO_1(\{1\}, \{1\},\{1\})\cup\OO_1(\{2\}, \{1\},\{1\})=$

$=\{y_1=z=s=0;x,y_2\neq 0\}\cup\{y_2=z=s=0;x,y_1\neq 0\}$
\item $\OO_2(\{1\}, \{1\},\{1\})\cup\OO_2(\{2\}, \{1\},\{1\})=$

$=\{x=y_1=z=s=0;y_2\neq 0\}\cup\{x=y_2=z=s=0;y_1\neq 0\}$
\item $\OO_1(\{1,2\},\{1\},\{1\})=\{y_1=y_2=z=s=0; x\neq 0\};$
\item $\OO_2(\{1,2\},\{1\},\{1\})=\{x=y_1=y_2=z=s=0\}.$
\end{itemize}
\end{ex}

\begin{ex}
Let $X=\VV(xy_1^3y_2^3+z_1^3z_2^3+1).$ We have the following list of $\Aut(X)$-orbits.
\begin{itemize}
\item $\OO=\{y_1,y_2\neq 0\}$;
\item $\OO(\{1\})\cup\OO(\{2\})=\{y_1=0,y_2\neq 0\}\cup\{y_2=0,y_1\neq 0\}$;
\item $\OO(\{1,2\})=\{y_1=y_2=0\}$.
\end{itemize}
\end{ex}

\section{Nonflexible varieties with a vairiable with power one}

In this section we consider a trinomial hypersurface for which we do not know, how $ML(X)$ looks like. We have a conjection about $ML(X)$. If it is true, we describe $\Aut_{alg}(X)$-orbits.
Let 
$$
X=\mathbb{V}(x_1x_2\ldots x_ky_1^{a_1}\ldots y_m^{a_m}+z_1^{b_1}\ldots z_p^{b_p}+s_1^{c_1}\ldots s_q^{c_q}), a_i, bj, c_l>1, m\geq 1, p\geq 1, q\geq 0, k>1.
$$
We assume $X$ to be nonflexible, i.e. not of type $H_4$.

Now we have the following LNDs:

$$D_{ij}(x_i)=\frac{\partial (z_1^{b_1}\ldots z_p^{b_p})}{\partial z_j}, D_{ij}(z_j)=-\frac{x_1\ldots x_ky_1^{a_1}\ldots y_m^{a_m}}{\partial x_i}, D_{ij} \text{ is zero on all other variables;}$$
$$E_{ij}j(x_i)=\frac{\partial (s_1^{c_1}\ldots s_q^{c_q})}{\partial s_j}, E_{ij}(s_j)=-\frac{x_1\ldots x_ky_1^{a_1}\ldots y_m^{a_m}}{\partial x_i}, E_{ij} \text{ is zero on all other variables.}$$

Intersection of kernels of all $D_{ij}$ and $E_{ij}$ equals $\KK[y_1,\ldots, y_m]$. So, let us state a conjection.
\begin{conj}
ML(X)=$\KK[y_1,\ldots, y_m]$.
\end{conj}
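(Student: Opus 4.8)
The plan is to adapt the strategy used in the proof of Proposition~\ref{p1} to the present setting, where the single variable $x$ is replaced by the product $x_1\ldots x_k$. As in the case $k=1$, one inclusion is immediate: the intersection of the kernels $\Ker D_{ij}$ and $\Ker E_{ij}$ equals $\KK[y_1,\ldots,y_m]$, since by applying the various $D_{ij}$ and $E_{ij}$ one can produce LNDs annihilating $x_1,\ldots,x_k$ and every $z_j$, $s_l$ except one. Hence $ML(X)\subseteq\KK[y_1,\ldots,y_m]$, and this direction requires no conjectural input. The whole difficulty lies in the reverse inclusion $\KK[y_1,\ldots,y_m]\subseteq ML(X)$, i.e. in showing that each $y_i$ is killed by \emph{every} LND of $\KK[X]$.

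The natural route is to mimic Proposition~\ref{p1} via the suspension machinery of Section~3. First I would try to realize $X$ as a suspension $\mathrm{Susp}(\cdot)$ in the variable $y_1$ (or each $y_i$ in turn), so that Corollary~\ref{coco} applies and reduces the claim to showing that some auxiliary element is \emph{maximal} in the sense of the partial order $\succeq$. In the case $k=1$ the key step was Lemma~\ref{cm}: after raising the power of $x$ one obtains a \emph{rigid} trinomial hypersurface (checked through Proposition~\ref{rt}), which forces $x$ to be maximal; Corollary~\ref{coco} then yields $ML(X)\subseteq\KK[y_1,\ldots,y_m]$ together with the complementary facts. Here, however, the relevant ``first factor'' is the product $x_1\ldots x_k$ rather than a single variable, so the element whose maximality one needs is not one of the coordinate generators, and the rigidity criterion must be invoked for a trinomial with several power-one variables $x_1,\ldots,x_k$ in the same monomial.

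This is exactly where the main obstacle appears, and it explains why the statement is only a conjecture rather than a proposition. Lemma~\ref{nap}, Lemma~\ref{leml} and Corollary~\ref{coco} are all phrased for a suspension in a single variable $y_1$ whose associated $\ZZ$-grading $\xi_{1j}$ makes $y_1$ maximal; when the first monomial contains the product $x_1\ldots x_k$ with $k>1$, the variable $y_i$ is \emph{not} maximal, since one already has nonzero LNDs (for instance $D_{ij}$) annihilating it. The Remark after Lemma~\ref{nap} is precisely the warning: for a grading $\xi_{ij}$ one can multiply an LND $\delta$ with $\delta(y_i)=0$ by a high power $y_i^N$ to push its degree negative, so the clean argument of Lemma~\ref{nap} collapses. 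Thus the homogeneous-component analysis that controlled $\partial(z_u)$ and $\partial(s_v)$ (forcing divisibility by $y_1^{a_1}\ldots y_m^{a_m}$, as in the proof of Theorem~\ref{AA}) can no longer be run directly, and one cannot conclude that an arbitrary LND kills all $y_i$.

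The honest assessment is therefore that proving the reverse inclusion in full generality requires a genuinely new computation of $ML(X)$ for trinomials with several power-one factors in one monomial, going beyond the single-suspension technique of Section~3. A plausible line of attack would be to filter $\KK[X]$ simultaneously by all the gradings $\eta_i$ and to show that no LND can have a component of sufficiently negative degree in every variable $x_1,\ldots,x_k$ at once; one would combine Lemma~\ref{fl} on leading components with a careful bookkeeping of how an LND acts on the $z$'s and $s$'s modulo the ideal $(y_1^{a_1}\ldots y_m^{a_m})$. Whether such an argument closes is precisely the content of the conjecture, and until the obstruction flagged in the Remark is resolved the inclusion $\KK[y_1,\ldots,y_m]\subseteq ML(X)$ must remain unproved; conditionally on it, Corollary~\ref{coco} then delivers the description of $\Aut_{alg}(X)$-orbits in the same form as Theorem~\ref{AA}.
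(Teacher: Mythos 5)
Your assessment coincides with the paper's own treatment of this statement: there, too, it is left as a conjecture, and the only part actually established is the inclusion $ML(X)\subseteq\KK[y_1,\ldots,y_m]$, obtained exactly as you obtain it, by intersecting the kernels of the explicit LNDs $D_{ij}$ and $E_{ij}$. Your diagnosis of why the Section~3 machinery breaks down is also the right one: for $k>1$ no single variable of the first monomial is maximal (each $x_i$ is annihilated by the nonzero LND $D_{i'j}$ with $i'\neq i$), Lemma~\ref{cm} is unavailable because raising the power of $x_1$ still leaves the power-one variables $x_2,\ldots,x_k$ and Proposition~\ref{rt}(1) then gives nonrigidity, and the Remark after Lemma~\ref{nap} is precisely the obstruction to running Lemma~\ref{leml} for the gradings $\zeta_t$: for instance $x_t^N D_{t'j}$, $t'\neq t$, is a homogeneous LND of negative $\zeta_t$-degree.

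Two points of comparison with what the paper does around the conjecture. First, instead of leaving the hard inclusion amorphous, the paper isolates a concrete sufficient condition: by Lemma~\ref{si} and the lemma following it, it is enough to know that $\KK[y_1,\ldots,y_m]$ is an $\Aut(\KK[X])$-invariant subring of $\KK[X]$; Theorem~\ref{dd} is then proved under that hypothesis rather than under the full conjecture, a reduction your write-up could usefully have extracted. Second, the multigrading attack you sketch at the end is in fact partially implemented in Lemma~\ref{spl}: there one decomposes with respect to $\rho\oplus\zeta_1\oplus\ldots\oplus\zeta_k$, kills the components annihilating all $x_t$ by rigidity of $\VV\bigl(x_1^3\ldots x_k^3y_1^{a_1}\ldots y_m^{a_m}+z_1^{b_1}\ldots z_p^{b_p}+s_1^{c_1}\ldots s_q^{c_q}\bigr)$, and handles the remaining components by passing to the algebraically closed field $\LL=\overline{\KK(x_t\mid t\neq u)}$, over which the hypersurface has a single power-one variable and the $k=1$ results (Lemma~\ref{ll}) apply. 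The reason this still falls short of the conjecture is instructive: it only yields that every homogeneous LND has positive $\rho$-degree, and since $\rho$ gives positive degree to $z_j$ and $s_l$ as well as to $y_i$, positivity of $\deg_\rho\partial(y_i)$ gives only $\partial(y_i)\in(y_i,z_j,s_l)$, not the divisibility $y_i\mid\partial(y_i)$ which is what clinched $y_i\in ML(X)$ via Freudenburg's Principle~5 in the case $k=1$. Finally, one small slip in your text: what fails for $k>1$ is maximality of the $x_i$, not of the $y_i$; the $y_i$ are never maximal, even for $k=1$ (they are meant to be minimal, i.e.\ to lie in $ML(X)$), so the sentence blaming non-maximality of $y_i$ on $D_{ij}$ conflates the two roles, though the conclusion you draw from it is unaffected.
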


It is sufficient to assume that $\KK[y_1,\ldots, y_m]$ is a $\Aut(\KK[X])$-invariant subring of $\KK[X]$.

\begin{lem}\label{si}
Assume $\KK[y_1,\ldots, y_m]$ is a $\Aut(\KK[X])$-invariant subring of $\KK[X]$. Then for each $1\leq i\leq k$ and $\varphi\in\Aut(X)$ we have $\varphi(y_i)=\lambda y_j$, $\lambda\in \KK^\times$.
\end{lem}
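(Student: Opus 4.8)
The goal is to show that under the invariance hypothesis on $\KK[y_1,\ldots,y_m]$, every automorphism sends each $y_i$ to a scalar multiple of some $y_j$. The strategy is to reduce to Corollary~\ref{coco}, which already gives exactly this conclusion provided two things hold: that $y_1$ (and by symmetry each $y_i$) is a maximal element of $\KK[X]$, and that $ML(X)\subseteq\KK[y_2,\ldots,y_m]$ in the appropriate suspension presentation. So the first task is to recognize $X$ as an $m$-suspension $\mathrm{Susp}(X',f,a_1,\ldots,a_m)$ where the $y_i$ play the role of the suspension variables and $f=-(z_1^{b_1}\cdots z_p^{b_p}+s_1^{c_1}\cdots s_q^{c_q})/(x_1\cdots x_k)$ lives on the base; more naturally, I would treat $\KK[X]$ as graded by the $\eta_i$ and invoke the machinery of Section~3 directly.

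\textbf{Key steps.} First I would establish that each $y_i$ is a maximal element. By Lemma~\ref{cm} it suffices to exhibit, for some $n$, a rigid variety obtained by replacing $a_i$ with $na_i$; concretely, raising one $y_i$-power makes the variety fall under the rigidity criterion of Proposition~\ref{rt} (all exponents $\geq 2$ and no exceptional even-pattern, since we have assumed $X$ is not of type $H_4$). This gives maximality of each $y_i$. Second, the invariance hypothesis gives $ML(X)\subseteq\KK[y_1,\ldots,y_m]$: indeed $ML(X)$ is $\Aut(X)$-invariant, and combining with Lemma~\ref{leml}(b), which shows $y_j\in ML(X)$ for $j>1$ once $y_1$ is maximal, together with the hypothesis pins down $ML(X)=\KK[y_1,\ldots,y_m]$. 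Third, I apply Corollary~\ref{coco} in the suspension with distinguished variable $y_1$ to conclude $\varphi(y_i)=\lambda y_j$; since each $y_i$ is maximal, the argument is symmetric in the indices, so the conclusion holds for every $i$. The point is that $y_i$ are precisely the indecomposable elements of $ML(X)=\KK[y_1,\ldots,y_m]$ that are semi-invariant under every $\KK^\times$-action (Lemma~\ref{leml}(c)), a property preserved by automorphisms.

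\textbf{Main obstacle.} The subtle point is that Corollary~\ref{coco} is stated for a single $m$-suspension with one distinguished variable $y_1$, whereas here all $x_1,\ldots,x_k$ appear symmetrically as ``$x$-type'' variables and the base of the suspension is itself a nontrivial variety rather than a point. I expect the main work to be checking that the suspension formalism of Section~3 genuinely applies: that $y_1$ is maximal in the precise sense required (via Lemma~\ref{cm} and the rigidity of the power-raised variety), and that the indecomposable semi-invariants of $\KK[y_1,\ldots,y_m]$ under all the gradings $\xi_{ij}=\eta_i\ominus\eta_j$ are exactly the $\lambda y_i$. Once maximality and the inclusion $ML(X)\subseteq\KK[y_1,\ldots,y_m]$ are in hand, the conclusion $\varphi(y_i)=\lambda y_j$ is immediate from Corollary~\ref{coco}, so the entire difficulty is concentrated in verifying the hypotheses of that corollary rather than in any new computation.
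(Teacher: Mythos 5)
Your reduction to Corollary~\ref{coco} cannot work, and the failure is exactly the reason this section of the paper operates under a hypothesis rather than a theorem. First, your claim that each $y_i$ is a maximal element inverts the actual situation: the $y_i$ are (under the hypothesis, and conjecturally) elements of $ML(X)$, i.e.\ they are killed by every LND, which makes them \emph{minimal} for the order $\succeq$; a maximal element is one annihilated by no nonzero LND, and the nonzero LNDs $D_{ij}$, $E_{ij}$ kill every $y_i$. Accordingly, Lemma~\ref{cm} can never certify maximality of a $y_i$: replacing $a_i$ by $na_i$ leaves the variables $x_1,\ldots,x_k$ with exponent one, so the modified hypersurface is still nonrigid by Proposition~\ref{rt}, condition~1. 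Second, even the correct candidate for a maximal element --- one of the $x_i$, in analogy with Section~5, where $x$ is maximal --- fails here: since $k>1$, raising the power of $x_1$ still leaves $x_2$ with exponent one, so Lemma~\ref{cm} does not apply; and indeed $x_1$ is genuinely not maximal, because $D_{2j}$ is a nonzero LND with $D_{2j}(x_1)=0$. So the hypotheses of Corollary~\ref{coco} are unavailable in principle; this is precisely the obstruction (see the Remark following Lemma~\ref{nap}) that makes $ML(X)$ unknown for $k>1$ and forces the invariance assumption. Your plan is also circular relative to the paper's logic: there, Lemma~\ref{si} is proved first and then used to deduce $ML(X)=\KK[y_1,\ldots,y_m]$, not the other way around.

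The paper's own proof is short and entirely different: it is geometric, not LND-theoretic. Each irreducible component of $X^{sing}$ is either of the form $\VV(x_i,x_j,z_u,s_v)$ or of the form $\VV(y_i,z_u,s_v)$, and the two types have different dimensions. Any automorphism $\varphi$ permutes these components and preserves dimension, so it sends $\VV(y_i,z_u,s_v)$ to some $\VV(y_j,z_{u'},s_{v'})$; hence $\varphi(y_i)\in(y_j,z_{u'},s_{v'})$. Since by hypothesis $\varphi(y_i)\in\KK[y_1,\ldots,y_m]$, it follows that $\varphi(y_i)$ is divisible by $y_j$ in $\KK[y_1,\ldots,y_m]$, and applying the same reasoning to $\varphi^{-1}$ (which also preserves $\KK[y_1,\ldots,y_m]$) forces $\varphi(y_i)=\lambda y_j$ with $\lambda\in\KK^\times$. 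If you want to repair your write-up, this dimension count on singular components is the missing idea; no suspension or maximality machinery enters at all.
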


\begin{proof}
Each irreducible component of the singular locus $X^{sing}$ has either the form $\VV(x_i, x_j, z_u, s_v)$ or the form  $\VV(y_i, z_u, s_v)$. This components have different dimensions. Therefore under an automorphism $\varphi$ a component $\VV(y_i, z_u, s_v)$ maps to a component $\VV(y_j, z_{u'}, s_{v'})$. Hence, $\varphi(y_i)\in (y_j, z_{u'}, s_{v'})$. But $\varphi(y_i)\in \KK[y_1,\ldots, y_m]$. That is $\varphi(y_i)=\lambda y_j$.
\end{proof}

\begin{lem} In assumptions of Lemma~\ref{si}, we have $ML(X)=\KK[y_1,\ldots, y_m]$.
\end{lem}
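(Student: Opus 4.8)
The goal is to prove that, under the assumption that $\KK[y_1,\ldots,y_m]$ is an $\Aut(\KK[X])$-invariant subring, we have the equality $ML(X)=\KK[y_1,\ldots,y_m]$. The plan is to establish two opposite inclusions. The inclusion $ML(X)\subseteq\KK[y_1,\ldots,y_m]$ is the easy direction and follows immediately from the derivations already exhibited: taking the intersection of the kernels of all the LNDs $D_{ij}$ and $E_{ij}$ yields exactly $\KK[y_1,\ldots,y_m]$, as noted just before the conjecture, and $ML(X)$ is contained in this intersection by definition of the Makar-Limanov invariant.

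The substantial direction is the reverse inclusion $\KK[y_1,\ldots,y_m]\subseteq ML(X)$, i.e.\ that each $y_i$ lies in the kernel of every LND of $\KK[X]$. First I would fix an arbitrary LND $\partial$ of $\KK[X]$. Since each $y_i$ is a generator of $\KK[X]$, it suffices to show $\partial(y_i)=0$ for every $i$. The idea is to exploit Lemma~\ref{si}, which is available under our hypothesis: it gives that for every $\varphi\in\Aut(X)$ and each $i$ one has $\varphi(y_i)=\lambda y_j$ for some $\lambda\in\KK^\times$ and some index $j$. Applying this to the one-parameter subgroup $\varphi_t=\exp(t\partial)$ generated by $\partial$, we get $\exp(t\partial)(y_i)=\lambda(t)\,y_{j(t)}$ for each $t$. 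Because the orbit map $t\mapsto\exp(t\partial)(y_i)$ is algebraic (indeed polynomial in $t$, as $\partial$ is locally nilpotent) and the finite index set for $j$ together with continuity forces $j(t)=i$ and $\lambda(t)$ to vary algebraically, the element $y_i$ is a semi-invariant of the additive group $\exp(t\partial)$. But a nonzero semi-invariant for a $\Ga$-action must be an invariant, since the only characters of $(\KK,+)$ are trivial; equivalently $\partial(y_i)=\mu\,y_i$ for some $\mu\in\KK$ forces $\mu=0$ by local nilpotence. Hence $\partial(y_i)=0$.

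Combining the two inclusions gives $ML(X)=\KK[y_1,\ldots,y_m]$, which is the assertion. The main obstacle, and the step requiring the most care, is the passage from the pointwise statement $\exp(t\partial)(y_i)=\lambda(t)y_{j(t)}$ to the conclusion that $y_i$ is a genuine $\partial$-semi-invariant: one must argue that the index $j(t)$ cannot jump and that $\lambda(t)$ depends algebraically on $t$, so that differentiating at $t=0$ is legitimate. This is where connectedness of the additive group and the discreteness of the index set do the work, ruling out the permutation of distinct $y_j$'s by a connected group. Once that is in place, the vanishing $\partial(y_i)=0$ is forced by the absence of nontrivial characters of $\Ga$, and the proof concludes.
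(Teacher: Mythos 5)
Your proof is correct and takes essentially the same approach as the paper: the inclusion $ML(X)\subseteq\KK[y_1,\ldots,y_m]$ comes from the kernels of the $D_{ij}$ and $E_{ij}$, and the reverse inclusion is deduced from Lemma~\ref{si} applied to the automorphisms $\exp(t\partial)$ for an arbitrary LND $\partial$. The paper's proof is only a two-line sketch of exactly this argument; your write-up supplies the details it leaves implicit (constancy of the index $j(t)$ by connectedness, algebraicity of $\lambda(t)$, and the fact that a $\Ga$-semi-invariant must be invariant).
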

\begin{proof}
Lemma~\ref{si} implies that $ML(X)\supseteq \KK[y_1,\ldots, y_m]$. But intersection of all $D_{ij}$ and $E_{ij}$ equals $\KK[y_1,\ldots, y_m]$.
\end{proof}

\begin{theor}\label{dd}
Assume $\KK[y_1,\ldots, y_m]$ is a $\Aut(\KK[X])$-invariant subring of $\KK[X]$. Then there are the following $\Aut_{alg}$-orbits on $X$.

\begin{itemize}
\item All regular points of $X$, where all $y_1,\ldots, y_m$ are nonzero form one $\Aut_{alg}$-orbit $\OO$;
\item Let $\varnothing\neq M\subset \{1,\ldots, m\}$. Denote $d=\gcd(b_1,\ldots, b_p, c_1,\ldots, c_q)$. Let $\varepsilon^d=1$. Then 
$$
\OO(M,\varepsilon)=\{\text{points of X }\mid y_j=0\Leftrightarrow j\in M, z_1^\frac{b_1}{d}\ldots z_p^\frac{b_p}{d}=-\varepsilon s_1^\frac{c_1}{d}\ldots s_q^\frac{c_q}{d}\neq 0\}
$$
is an $Aut_{alg}(X)$-orbit;
\item Let $K\subset \{1,\ldots, k\}$, $M\subseteq \{1,2,\ldots, m\}$, $2|M|+|K|\geq 2$, $\varnothing\neq P\subseteq \{1,2,\ldots, p\}$, and $\varnothing\neq Q\subseteq \{1,2,\ldots, q\}$. Then
\begin{multline*}
\OO(K,M,P,Q)=\\
=\{\text{points of X }\mid x_i=0\Leftrightarrow i\in K, y_j=0\Leftrightarrow j\in M, z_l=0\Leftrightarrow l\in P, s_r=0\Leftrightarrow r\in Q\}
\end{multline*}
is an $Aut_{alg}(X)$-orbit.
\end{itemize} 
\end{theor}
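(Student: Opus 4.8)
The proof follows the template already used for Theorem~\ref{AA}, so I would split the argument into two halves: first showing each of the listed sets is contained in a single $\Aut_{alg}(X)$-orbit (transitivity inside a set), and then showing no two distinct sets can be glued (separation). For the first half, I would fix two points $\alpha,\beta$ in a given set and move one to the other by a composition of $\TT$ and the exponentials $\exp(u_{ij}D_{ij})$, $\exp(v_{ij}E_{ij})$. On $\OO$, since all $y_i\neq 0$, a torus element equalizes every $y_i$ and then the $D_{ij},E_{ij}$ flows equalize all $z_l$ and $s_r$ while fixing the $y_i$; once all the $y_i,z_l,s_r$ agree and the $y_i$ are nonzero, the defining equation forces the product $x_1\cdots x_k$ to agree, and a residual torus adjusts the individual $x_i$. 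On $\OO(M,\varepsilon)$ the monomial $y_1^{a_1}\cdots y_m^{a_m}$ vanishes, so the $D_{ij}$ fix the $z_l$ and one applies $\TT$ to match the surviving coordinates and a single flow to fix the $x$-factor; the condition $z_1^{b_1/d}\cdots=-\varepsilon s_1^{c_1/d}\cdots$ is a $\TT$-invariant that pins down the component $\varepsilon$. The sets $\OO(K,M,P,Q)$ are simply $\TT$-orbits (every monomial has a variable forced to zero), so transitivity there is immediate.

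For the separation half I would first invoke Lemma~\ref{si}: under the standing assumption that $\KK[y_1,\dots,y_m]$ is $\Aut(\KK[X])$-invariant, each $y_i$ is $\Aut_{alg}(X)$-semi-invariant, so the set $M=\{i\mid y_i=0\}$ is an $\Aut_{alg}(X)$-invariant, separating sets with different $M$. Regularity separates $\OO$ and the $\OO(M,\varepsilon)$ (which consist of smooth points) from the $\OO(K,M,P,Q)$ (singular points), and a dimension count separates $\OO$ from the various $\OO(M,\varepsilon)$. Within one $M$, distinct components $\OO(M,\varepsilon)$ cannot be permuted by the connected group $\Aut_{alg}(X)$, exactly as in Theorem~\ref{AA}. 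To separate the singular strata $\OO(K,M,P,Q)$ for differing $K,P,Q$, I would use Proposition~\ref{linn}: the triple $(K,P,Q)$ is determined by which irreducible components $\VV(x_i,x_j,z_u,s_v)$ and $\VV(y_i,z_u,s_v)$ of $X^{\mathrm{sing}}$ contain the point, and $\Aut_{alg}(X)$ cannot change that incidence data.

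The genuinely new difficulty, compared with Theorem~\ref{AA}, is that the free term $x_1\cdots x_k$ is now a product of several variables rather than a single $x$, so the vanishing locus of the first monomial is stratified by the subset $K\subseteq\{1,\dots,k\}$ of factors that vanish, and the strata with different $|K|$ can have the same $M,P,Q$. In Theorem~\ref{AA} the analogous separation ($\OO_1$ versus $\OO_2$, i.e. $x\neq0$ versus $x=0$) was the crux and required showing $\partial(x)(\alpha)=0$ for every LND $\partial$ and every semisimple $\delta$ at a point $\alpha$ of the smaller stratum. Here I expect the main obstacle to be the corresponding claim: at a point where some $x_i=0$, one must show that no LND and no semisimple derivation can make that coordinate nonzero, and moreover that the number of vanishing $x$-factors is an $\Aut_{alg}(X)$-invariant. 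I would prove this by the same mechanism as in Theorem~\ref{AA} — decomposing an arbitrary LND or semisimple derivation into $\eta$-homogeneous components (where $\eta=\eta_1\oplus\cdots\oplus\eta_m$), using Remark~\ref{uuu} to get divisibility of $\partial(z_l)$ and $\partial(s_r)$ by $y_1^{a_1}\cdots y_m^{a_m}$, and reading off from the defining equation that $\partial(x_1\cdots x_k)$, hence the vanishing pattern of the $x$-factors, is preserved. The remaining bookkeeping is routine; the value of the standing $ML(X)$ assumption is precisely that it licenses Lemma~\ref{si} and the treatment of the $y_i$ as invariants that drives the whole separation argument.
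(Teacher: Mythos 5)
Your overall skeleton coincides with the paper's (transitivity via $\TT$ and the $D_{ij},E_{ij}$ flows; separation via Lemma~\ref{si}, connectedness of $\Aut_{alg}(X)$ for the $\varepsilon$-components, and incidence with irreducible components of $X^{sing}$), and you correctly locate the crux: the incidence data cannot distinguish $K=\varnothing$ from $K=\{r\}$, so the real work is to separate $\OO(\{x_r\},M,P,Q)$ from $\OO(\varnothing,M,P,Q)$. But the mechanism you propose for this crux --- ``the same mechanism as in Theorem~\ref{AA}'', i.e.\ decomposition into $\eta$-homogeneous components plus Remark~\ref{uuu} --- does not transfer, and this is a genuine gap rather than bookkeeping. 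Remark~\ref{uuu} rests on Lemma~\ref{nap}, which requires the distinguished variable of negative degree to be a \emph{maximal} element of $\KK[X]$; in Section~5 the single $x$ is maximal (Lemma~\ref{cm} together with rigidity of the auxiliary hypersurface). Here, with $k>1$, no $x_t$ is maximal: the nonzero LNDs $D_{i'j}$ and $E_{i'j}$ with $i'\neq t$ kill $x_t$. Consequently, for any grading of $\eta$-type (some $x_t$ of negative degree, $y_i$ of degree one) the derivations $x_t^N D_{i'j}$ are homogeneous LNDs of arbitrarily negative degree --- exactly the obstruction flagged in the remark following Lemma~\ref{nap}. So you cannot conclude that the homogeneous components of an arbitrary LND have degree $\geq a_i$, nor obtain the divisibility $y_1^{a_1}\cdots y_m^{a_m}\mid\partial(z_l),\partial(s_r)$ on which the whole computation hangs. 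Nor can you bypass divisibility by evaluating the identity $\partial(x_1\cdots x_k\,y_1^{a_1}\cdots y_m^{a_m})=-\partial(z_1^{b_1}\cdots z_p^{b_p})-\partial(s_1^{c_1}\cdots s_q^{c_q})$ at a point $\alpha$ of the small stratum: there both $y_1^{a_1}\cdots y_m^{a_m}$ and $x_1\cdots x_k$ vanish, so evaluation gives $0=0$ and says nothing about $\partial(x_r)(\alpha)$.

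The paper's actual proof introduces a new grading $\rho$ in which \emph{every} $x_t$ has degree zero: fixing $i\in M$, $j\in P$, $l\in Q$, one sets $\deg_\rho(y_i)=b_jc_l$, $\deg_\rho(z_j)=a_ic_l$, $\deg_\rho(s_l)=a_ib_j$, all other variables of degree zero. The key new statement is Lemma~\ref{spl}: every nonzero $\rho$-homogeneous LND has positive $\rho$-degree. Its proof is genuinely more involved than anything in Theorem~\ref{AA}: it uses the auxiliary $\ZZ^{k+1}$-grading $\rho\oplus\zeta_1\oplus\cdots\oplus\zeta_k$, rigidity of $\VV(x_1^3\cdots x_k^3y_1^{a_1}\cdots y_m^{a_m}+z_1^{b_1}\cdots z_p^{b_p}+s_1^{c_1}\cdots s_q^{c_q})$ via Proposition~\ref{rt}, and base change to $\LL=\overline{\KK(x_1,\ldots,x_{u-1},x_{u+1},\ldots,x_k)}$ to reduce to the one-$x$ situation of Lemma~\ref{ll}. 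Positivity then forces $\partial(x_r)$ into the ideal $(y_i,z_j,s_l)$, hence $\partial(x_r)(\alpha)=0$; semisimple derivations need a further decomposition with respect to $\rho\oplus\zeta_r$. None of this is supplied by your sketch. A smaller but also real defect: in your transitivity argument on $\OO$, the step ``the defining equation forces the product $x_1\cdots x_k$ to agree, and a residual torus adjusts the individual $x_i$'' fails as soon as some $x_i$ vanishes --- the product is then $0$ at both points, a torus cannot change which coordinates vanish, and the $D_{ij}$, $E_{ij}$ flows fix the $z$'s and $s$'s once some $x_t$ with $t\neq i$ vanishes. The paper instead first applies $\exp(vD_{i1})$ to make $x_2,\ldots,x_k$ nonzero (possible because a regular point with all $y_i\neq 0$ and at least two vanishing $x$'s has all $z_j\neq 0$), and only then matches the remaining coordinates.
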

\begin{proof}
It easy to see that all this subsets do not intersect and cover all $X$. Firstly let us check that two points from the same set $\OO$, $\OO(M,\varepsilon)$, or $\OO(K,M,P,Q)$ can be moved one to each other by $\Aut_{alg}(X)$. It is easy to see that $\OO(K,M,P,Q)$ are $\TT$-orbits. If $\alpha,\beta\in \OO(M,\varepsilon)$, then applying $t\in \TT$ we can obtain $y_i(t\cdot \alpha)=y_i(\beta)$, $z_j(t\cdot \alpha)=z_j(\beta)$, $s_l(t\cdot\alpha)=s_l(\beta)$. Then applying
$\exp(uD_{i1})$ we can move $x_1,\ldots, x_k$. 

Let $\alpha,\beta\in \OO$. Then either there is $j$ such that all $x_i(\beta)\neq 0$ for $i\neq j$, or $z_1(\beta),\ldots, z_p(\beta)\neq 0$. In the second case we can apply $\exp(vD_{i1})$ to obtain $x_i\neq 0$. So we can assume $x_i(\beta)\neq0$ for $i\geq 2$.  We can apply  $t\in \TT$  to obtain $y_i(t\cdot \alpha)=y_i(\beta)$, $x_j(t\cdot \alpha)=x_j(\beta)$, $j\geq 2$. Then by $\exp(uD_{1i})$ and $\exp(uE_{1i})$ we can make $z_i$ and $s_j$ coinciding on $\beta$ and image of $\alpha$. Then $x_1$ also coincides.

Now we are to prove that these sets can not be glued by $\Aut_{alg}(X)$. By Lemma~\ref{si}, each $y_i$ is $\Aut(X)$-semi-invariant. Therefore, we can not change the set $M$ by automorphisms. The set of irreducible components containing a point can not be changed by $\Aut_{alg}(X)$. Therefore, we only are to prove that 
$\OO(M,\varepsilon)$ can not be glued by $\Aut_{alg}(X)$ for different $\varepsilon$ and that $\OO(\{x_r\},M,P,Q)$ can not be glued with $\OO(\varnothing, M,P,Q)$, where $M\neq\varnothing$. Since $\Aut_{alg}(X)$ is generated by connected subgroups, it can not glue irreducible components of the invariant subset $\OO(M)$. 
This components are  $\OO(M,\varepsilon)$. Components $\OO(\{x_r\},M,P,Q)$ and $\OO(\varnothing, M,P,Q)$ exist only if $X$ is a trinomial hypersurface without free term. Let us fix $y_i\in M$, $z_j\in P$ and $s_l\in Q$. Consider the $\ZZ$-grading $\rho$:
$$
\deg_{\rho}(y_i)=b_jc_l, \deg_{\rho}(z_j)=a_ic_l, \deg_{\rho}(s_l)=a_ib_j, \text{ degrees of other variables are zero.}
$$
\begin{lem}\label{spl}
Let $\partial$ be a nonzero $\rho$-homogeneous LND. Then $\deg_{\rho}(\partial)> 0$. 
\end{lem}
\begin{proof}
Let us consider gradings $\zeta_1,\ldots, \zeta_k$, where 
$$\deg_{\zeta_t}(x_t)=-a_i, \deg_{\zeta_t}(y_i)=1, \text{ degrees of other variables are zero.} $$
Consider the $\ZZ^{k+1}$-grading $\rho\oplus\zeta_1\oplus\ldots\oplus\zeta_k$.
Suppose $\deg_{\rho}\partial\leq0$, then we can decompose $\partial$ onto the sum of $\rho\oplus\zeta_1\oplus\ldots\oplus\zeta_k$-homogeneous components, some of which should be nonzero $\rho\oplus\zeta_1\oplus\ldots\oplus\zeta_k$-homogeneous LNDs with nonpositive $\rho$-degrees. Suppose,  $\delta$ is such a component. If 
$\delta(x_1)=\ldots=\delta(x_k)=0$, then $\delta$ induces an LND of 
$$\VV(x_1^3\ldots x_k^3y_1^{a_1}\ldots y_m^{a_m}+z_1^{b_1}\ldots z_p^{b_p}+s_1^{c_1}\ldots s_q^{c_q}),$$ 
which is rigid by Proposition~\ref{rt}. So, there exists $u$ such that $\delta(x_u)\neq 0$. That is $\deg_{\zeta_u}(\delta)>0$. Hence, $y_i\mid\delta(x_t)$ for all $t\neq u$. That is either $\deg_{\rho}>0$, or $\delta(x_t)=0$ for all $t\neq u$. If $\delta(x_t)=0$ for all $t\neq u$ we can consider the field $\LL=\overline{\KK(x_1,\ldots, x_{u-1}, x_{u+1},\ldots, x_k)}$. Then $\delta$ induces an LND of 
$$X(\LL)\cong\VV(x_uy_1^{a_1}\ldots y_m^{a_m}+z_1^{b_1}\ldots z_p^{b_p}+s_1^{c_1}\ldots s_q^{c_q}).$$
As we have proved in Lemma~\ref{ll}, $y_1^{a_1}\ldots y_m^{a_m}$ devids all images $\delta(z_v)$ and $\delta(s_w)$. This implies $\deg_\rho(\delta)>0$.
Lemma is proved.
\end{proof}

Now, let $\partial$ be an LND and $\alpha\in \OO(\{x_r\},M,P,Q)$. Consider the decomposition of $\partial$ onto $\rho$-homogeneous components: $\partial=\sum_{i=l}^k\partial_i$. We have $l>0$. Hence, $\deg_\rho(\partial_i(x_r))>0$. Therefore, $\partial(x_r)(\alpha)=0$. Hence, we can not move by $\SAut(X)$ the point $\alpha$ outside $\OO(\{x_r\},M,P,Q)$. 

Let us consider a semisimple derivation $\delta$.  Consider $\ZZ^{2}$-grading $\rho\oplus\zeta_r$. Let $\delta=\sum\delta_{u_1u_2}$ is
 its decomposition onto homogeneous components. If $u_1<0$ then we have a summand of negative $\rho$-degree in the decomposition of $\delta$ onto $\rho$-homogeneous components. This contradicts to Lemmas~\ref{fl} and \ref{spl}. Therefore, $u_1\geq 0$. If $u_1>0$, then $\delta_{u_1u_2}(x_r)(\alpha)=0$. Suppose $u_1=0$. 
 If $u_2<a_2$, then $x_r\mid \delta_{u_1u_2}(x_r)$. Hence, $\delta_{u_1u_2}(x_r)(\alpha)=0$. 
 If $u_2\geq a_2$, then $y_i^{a_i}\mid\delta_{u_1u_2}(z_j)$ , $y_i^{a_i}\mid\delta_{u_1u_2}(s_l)$ for all $j,l$, and $y_i^{a_i}\mid \delta_{u_1u_2}(x_v)$, $v\neq r$. Since $u_1=0$ this implies $\delta_{u_1u_2}(z_j)=\delta_{u_1u_2}(s_l)= \delta_{u_1u_2}(x_v)=0$. We know that $\delta(y_j)=\lambda_j y_j$ for all $j$. Hence, if $u_2\neq0$, then $\delta_{0u_2}(y_j)=0$. Therefore, $\delta_{0u_2}=0$. So, in all cases $\delta_{u_1u_2}(x_r)(\alpha)=0$.  Thus, $\delta(x_r)(\alpha)=0$, i.e. we can not move by $\Aut_{alg}(X)$ the point $\alpha$ outside $\OO(\{x_r\},M,P,Q)$.

\end{proof}

\end{document}